\colorlet{MyBlue}{DodgerBlue!60!Black}
\numberwithin{equation}{section}  %numberwithin goes before cleverefs when using hyperref
\newtheorem{theorem}{Theorem}
\newtheorem{lemma}{Lemma}
\newtheorem{proposition}{Proposition}
\newtheorem{definition}{Definition}
\newtheorem{corollary}{Corollary}
\theoremstyle{remark}
\newtheorem{remark}{Remark}
\def \E {\mathbb{E}}
\def \N  {\mathbb{N}} 
\def \P {\mathbb{P}}
\def \R  {\mathbb{R}} 
\def \Z  {\mathbb{Z}}
\newcommand{\bigO}{\ensuremath{\mathcal{O}}} % Big-O notation
\newcommand{\smallO}{\ensuremath{o}}         % Big-O notation
\newcommand{\smallOmega}{\ensuremath{\omega}}     % Big-O notation
\newcommand{\bigTheta}{\ensuremath{\Theta}}     % Big-O notation
\newcommand{\correlation}[4][\empty]
{\ensuremath{U^{\ifx#1\empty\relax\else (#1)\fi}_{#2}(#3,#4)}} % 2-point correlation function
\newcommand{\NNM}{\ensuremath{\nu}} % Non-normalised measuree
\renewcommand{\c}{\ensuremath{\colon}} % Set-builer colon
\DeclarePairedDelimiter\cbrac\{\}
\DeclarePairedDelimiter\paren()
\DeclarePairedDelimiter\abs{\lvert}{\rvert}
\DeclarePairedDelimiter{\ceil}\lceil\rceil
\begin{document}
%----------------------------------------------------------------------
%%% TITLE & AUTHORS
%----------------------------------------------------------------------
\title[Loop-erased partitioning]{Loop-erased partitioning of a network: \\ monotonicities \& analysis of cycle-free graphs}

\author[L.~Avena]{Luca Avena$^\ddag$} 
\address{{$^\ddag$ Leiden University, Mathematical Institute, Niels Bohrweg 1
		2333 CA, Leiden. The Netherlands.}}
\email{l.avena@math.leidenuniv.nl}

\author[J.E.P.~Driessen]{Jannetje Driessen$^\S$} 
\address{{$^\S$ Leiden University, Mathematical Institute, Niels Bohrweg 1
		2333 CA, Leiden. The Netherlands.}}
\email{jannetjedriessen@gmail.com}

\author[V.T.~Koperberg]{Twan Koperberg$^\star$} 
\address{{$^\star$ Leiden University, Mathematical Institute, Niels Bohrweg 1
		2333 CA, Leiden. The Netherlands.}}
\email{v.t.koperberg@math.leidenuniv.nl}

\subjclass[2020]{05A18, 05C05, 05C81, 05C85, 60J10, 60J28}
\keywords{graph Laplacian, random partitions, loop-erased random walk, rooted spanning forests, determinantal processes}

\begin{abstract} 
We consider random partitions of the vertex set of a given finite graph that can be sampled by means of loop-erased random walks stopped at a random independent exponential time of parameter $q>0$. The related random blocks tend to cluster nodes visited by the random walk associated to the graph on time scale $1/q$. This random partitioning is induced by a measure of rooted spanning forest of the graph which generalizes the classical uniform spanning tree measure and which can be obtained as a zero-limit of FK-percolation with an external cemetery state. Some general properties of this rooted forest measure and related determinantal observables, along with a number of applications in data analysis have been recently explored. We are here mainly interested in the structure the emergent partitioning, referred to as loop-erased partitioning, as the scale parameter $q$ varies.

We first present two main general results shedding light on subtle monoticities properties in $q$ of these rooted forest and associated loop-erased partitioning measures. The first theorem characterizes monotone events in $q$ by deriving a Russo-like formula.
Our second general result concerns two-point correlations defined by the probability that two vertices do not belong to the same block of the partitioning. It states that, on undirected graphs, 
these pairwise-correlation functions are increasing in $q$.  
We then explore other types of results aiming at understanding the emerging asymptotic clusters on simple insightful growing graph models, as $q$ scales with the graph size. Some first results in this direction have been investigated in the recent~\cite{avena2019meanfield} on dense geometries. Here instead we look at very sparse sequences of graphs. We offer a detailed analysis of the resulting partitioning on line segments and we look at simple trees and other almost tree-like geometries, without and with implanted modular structures. For the latter, we characterize emergence of giants and asymptotic detection of these implanted modules.
%Results in this direction are in the spirit of the  recent~\cite{avena2019meanfield} where a similar have been investigated in thefor dense geometries.
\end{abstract}

\maketitle

\medskip

\footnotesize

\newpage
%%%%%%%%%%% SECTION 1 %%%%%%%%%%%%%%%%%%%%%%%%%%%

\section{Intro: Rooted spanning forests and loop-erased partitioning}\label{intro}

Consider an arbitrary directed weighted finite graph $G=(V, E, w)$ on $n=|V|$ vertices where $E\subseteq\{e=(x,y): x,y \in V \}$ stands for the edge set and $w: E \rightarrow [0,\infty)$ is a given edge-weight function.
We call Random Walk (RW) associated to $G$ the continuous-time Markov chain $X=(X_t)_{t\ge 0}$ with state space $V$ and the \emph{discrete Laplacian} as infinitesimal generator, i.e. the $n\times n$ matrix:
\begin{equation}\label{Laplacian} L=A-D, \end{equation} 
where for any $x,y\in [n]:=\{1,2,\ldots ,n\}$,  $A(x,y)=w(x,y)\mathbf{1}_{\{ x\neq y\}}$ is the \emph{weighted adjacency matrix} and $D(x,y)=\mathbf{1}_{\{ x=y\}}\sum_{z\in[n]\setminus\{x\}} w(x,z) $ is the diagonal matrix guaranteeing that the entries of each row in $L$ sum up to $0$.

Let $\mathcal{F}$ denote the space of rooted spanning forests of $G$, where a rooted spanning forest $F\in\mathcal{F}$ of a graph is a collection of vertex disjoint rooted trees spanning its vertex set. We consider a rooted tree to be a collection of directed edges pointing towards the root. That is, a rooted forest $F$ is a subset of $E$ such that: 
\begin{enumerate}[$(i)$]
  \item each vertex has at most one outgoing edge in $F$;
  \item if there exists a directed path in $F$ from vertex $x$ to vertex $y$, then no such path exists from $y$ to $x$.
\end{enumerate}
The \emph{roots} of $F$ are those vertices without an outgoing edge.

\begin{definition}[{\bf Rooted Spanning Forest of intensity $q$}]\label{def:RSF}Fix a positive parameter $q>0$	and let $\Phi_q$ be the random variable with values in $\mathcal F$ with law:
\begin{equation}\label{RSF}
  \mathbb{P}(\Phi_q=F)=  \frac{q^{r(F)} w(F)}{Z(q)}, \quad\quad F\in \mathcal F, 
\end{equation}
where $w(F):=\prod_{e\in F} w(e)$ stands for the forest weight, $r(F)$ denotes the number of trees (or equivalently the number of roots) in $F\in \mathcal F$, and $Z(q)$ is a normalizing constant referred to as the partition function.
We will refer to this measure as \emph{random rooted spanning forest} of intensity $q$.
\end{definition} In the unitary weight case $w\equiv 1$, when $q=1$, this measure becomes uniform over the set of rooted spanning forests $\mathcal F$ and its structure has been partially analyzed in several geometrical setups in relation to random combinatorial models in statistical physics and coalescence theory, see~\cite{pitman1999coalescent,jones1999weights,pitman2006csp,chebotarev2008goldenratio,kenyon2011laplacian,kenyon2019determinantal, jarai2015avalanche}. 
For any $q > 0$, $\Phi_q$ induces a randomized decomposition of a given network into blocks (corresponding to its trees) and for each block it identifies a representative node (the root of a tree). The presence of the tuning parameter $q$ makes this object natural for exploring a network architecture in a multiscale fashion.
The goal of this paper is to understand the structure of the resulting unrooted random blocks on the set of partitions $\mathcal{P}(V)$ of the vertex set $V$ as the scaling parameter $q$ varies. We refer to this object, defined next, as the Loop-Erased Partitioning (LEP). Its analysis has been initiated on dense graphs in the recent~\cite{avena2019meanfield}. In this work we derive general results on the monotonicity properties of this measure (see Theorems \ref{thm:derivative_probabilities} and \ref{thm:monotone_correlations_symmetric}) and then, by means of these and other properties, we perform a systematic analysis of the emergent partition on very sparse topologies. 

\begin{definition}[{\bf Loop-Erased Partitioning (LEP) of intensity $q$}]\label{LEP}
Given $G=(V, E, w)$, fix a positive parameter $q>0$. We call \emph{loop-erased partitioning of intensity $q$} of the graph, the random unrooted partition, denoted by $\Pi_q$, of $V$, with law:
\begin{equation}\label{LEPmeas}
  \mathbb{P}(\Pi_q=\pi_m)=  \frac{q^m \times \sum_{F\in\mathcal F: \pi(F)=\pi_m } w(F)}{Z(q)}, \quad\quad \pi_m\in \mathcal P(V), \ m\leq |V|, 
\end{equation}
where the sum runs over the space of rooted spanning forests $\mathcal F$ of $G$ and
$\pi(F)$ stands for the partition of $V$ induced by a given rooted spanning forest $F$ where each block is determined by vertices belonging to the same tree, and $m$ counts the number of blocks in the partition $\pi_m$. Equivalently, 
\begin{equation}
  \Pi_q:=\pi(\Phi_q).
\end{equation}
\end{definition}

\subsubsection*{\bf Rooted forest measure and relation to uniform spanning tree:}
The rooted forest $\Phi_q$ is a natural extension of the classical UST (Uniform Spanning Tree) measure which is readily recovered in the constant weight case $w\equiv 1$ by taking the limit of $q$ going to zero in Eq.~\eqref{RSF}. Alternatively, this rooted forest $\Phi_q$ can also be seen as a measure on weighted spanning trees on the extended weighted graph obtained by adding an extra cemetery state accessible from any vertex via an edge with weight $q$. Under this perspective, it is clear that most results known for the UST do have a generalized analogue in the context of this rooted generalized measure. For example, edges in $\Phi_q$ form a determinantal process~\cite{avena2018applications} due to a version of the so-called 
transfer-current theorem~\cite{burton1993transfer}, clarifying its status within negatively associated systems, see~\cite{pemantle2000towards,grimmett2004negative,kahn2010correlation}. Due to the Kirchhoff's matrix tree theorem, the normalizing constant in Eq.~\eqref{LEPmeas} can be expressed as the characteristic polynomial of the matrix $L$ evaluated at $q$, i.e.
\begin{equation}\label{Z}Z(q):=\sum_{F\in \mathcal F }q^{r(F)} w(F)=\det[qI-L],\end{equation}
see e.g.~\cite{avena2018applications, chebotarev1997mft}.
As far as sampling is concerned, for fixed $q>0$, one can use the celebrated algorithm due to Wilson~\cite{wilson1996generating} based on loop-erased random walks. The latter is in fact a classical efficient procedure allowing to sample a rooted tree of a graph with probability proportional to its weight. 
Further, it is well known that the UST can be obtained from the unifying FK-percolation ``super-model'' by properly taking the related interaction parameter to zero, see e.g.~\cite{grimmett2006cluster}. Not surprisingly, as expressed in Lemma~\ref{FK} below, which for simplicity we state in the unitary weight case $w\equiv 1$, the rooted forest in Eq. \eqref{RSF} can also be obtained via a similar zero-limit but by considering a proper FK-percolation with an additional cemetery state. The proof of this proposition is as in~\cite{grimmett2006cluster}, see Thm. 1.23 in Sect 1.5 therein, with the parameters of the FK as specified in the statement below. 

\begin{lemma}[{\bf Rooted forest as zero-limit of extended FK-percolation}]\label{FK}
Given an undirected simple graph $G=(V, E)$, let $G_{\dagger}:=(V_\dagger, E_\dagger)$ be the extended
graph with $V_\dagger:=V\cup\{\dagger\}$ where $\{\dagger\}$ denotes an extra state, 
$E_\dagger:=E\cup\bar{E}$ with $\bar{E}:=\{ (x,\dagger): x\in V\}$. 
Consider the generalized FK-percolation on $G_\dagger$ with parameter $\lambda>0$ and vector of weights $\vec{p}= (p_e)_{e\in E_\dagger}$ such that $p_e=p\in (0,1)$ if $e\in E$ and $p_e= \gamma>0$ for $e\in\bar{E}$, 
that is, the following measure on spanning subgraphs of $G_{\dagger}$ seen as collection of edges in $\Omega:=\{0,1\}^{E_\dagger}$  :
\begin{equation}\label{FKs}
  \mathbb{P}(FK=\omega)=  \frac{\lambda^{k(\omega)} \times \prod_{e\in E} p^{\omega(e)}(1-p)^{1-\omega(e)}
  \prod_{e\in \bar{E}} \gamma^{\omega(e)}(1-\gamma)^{1-\omega(e)}}{Z(\lambda, \vec{p})}, \quad\quad \omega \in\Omega,
 \end{equation}
with $k(\omega)$ counting the number of connected components of the graph $\omega$ and $Z(\lambda, \vec{p})$ being a normalizing constant.
 Assume that $\vec{p}$ is a function of $\lambda$ such that, as $\lambda\rightarrow 0$,
$\gamma=\gamma(\lambda)\rightarrow 0$,  $p=p(\lambda)\rightarrow 0$ and 
$\gamma(\lambda)/p(\lambda)\rightarrow q\in (0,\infty)$.
Then as $\lambda$ goes to zero, the law in Eq.~\eqref{FKs} (projected onto subgraphs of $G$) degenerates into the law of the random rooted forest $\Phi_q$ in Eq.~\eqref{RSF} with unitary weights.
\end{lemma}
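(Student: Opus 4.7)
The plan is to reduce this claim to Grimmett's derivation of the UST as a zero-limit of the FK measure (Thm.~1.23 of \cite{grimmett2006cluster}), applied on the extended graph $G_\dagger$ and combined with a canonical bijection between its spanning trees and rooted spanning forests of $G$. To set up, I would first factor the edge-independent normalization out of \eqref{FKs} and use $p,\gamma\to 0$ to simplify the weight of $\omega$ to be asymptotic to $\lambda^{k(\omega)} p^{|\omega|_E} \gamma^{|\omega|_{\bar E}}$, where $|\omega|_E$ and $|\omega|_{\bar E}$ count the edges of $\omega$ in $E$ and $\bar E$ respectively. Using $\gamma/p\to q$, this is proportional (up to a factor independent of $\omega$) to $\lambda^{k(\omega)} p^{|\omega|} q^{|\omega|_{\bar E}}$ with $|\omega|=|\omega|_E+|\omega|_{\bar E}$.

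The key combinatorial ingredient is the bijection $F\leftrightarrow\tilde F$ between rooted spanning forests of $G$ and spanning trees of $G_\dagger$: given $F\in\mathcal F$ with roots $r_1,\ldots,r_{r(F)}$, one adjoins the edges $(r_i,\dagger)$ for each root. Connectedness and acyclicity of $\tilde F$ are immediate --- any cycle through $\dagger$ would force two of its incident $\bar E$-edges to land on roots of the same tree of $F$, contradicting the forest structure. Under this bijection $k(\tilde F)=1$, $|\tilde F|_E = N - r(F)$ and $|\tilde F|_{\bar E} = r(F)$, so the asymptotic FK weight of $\tilde F$ reduces (up to a common $\omega$-independent factor) to $\lambda\, p^N q^{r(F)}$, matching \eqref{RSF} with $w\equiv 1$ after normalization.

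The technical crux, mirroring Grimmett's argument, is to show that subgraphs $\omega$ which are not spanning trees of $G_\dagger$ contribute vanishing probability. Using the Euler-type inequality $|\omega|+k(\omega)\geq N+1$, with equality precisely when $\omega$ is acyclic, non-spanning-tree configurations split into two types: acyclic $\omega$ with $k(\omega)\geq 2$, whose weight-ratio to a suitably chosen spanning tree (obtained by adding a single bridging edge) carries a factor $(\lambda/p)^{k(\omega)-1}$ or $(\lambda/\gamma)^{k(\omega)-1}$; and cyclic $\omega$, whose weight-ratio to a spanning tree obtained by deleting cycle-edges carries a strictly positive power of $p$. Under the Grimmett scaling $\lambda/p\to 0$ --- the natural reading of the hypotheses once $\gamma/p\to q>0$ is imposed --- both families of factors vanish, so the limit measure concentrates on spanning trees of $G_\dagger$. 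Transporting through the bijection then yields $\mathbb P(\Phi_q=F)\propto q^{r(F)}$, which is exactly \eqref{RSF} with unitary weights.

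The main obstacle is this last concentration step: one must control every non-spanning-tree configuration uniformly and check that the penalty factors summed over the (finite) configuration space still produce a negligible total as $\lambda\to 0$. The combinatorial core is unchanged from Grimmett's proof, but the coexistence of two edge classes together with the emergence of $q$ from the ratio $\gamma/p$ requires some bookkeeping, in particular to ensure that each non-spanning-tree $\omega$ admits a comparison spanning tree obtained by a single-edge modification in the appropriate edge class.
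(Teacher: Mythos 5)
Your proposal is correct and coincides with the paper's own (cited) argument: the paper proves Lemma~\ref{FK} simply by invoking Grimmett's Theorem~1.23 on the extended graph $G_\dagger$, which is exactly the Euler-formula weight comparison concentrating the limiting measure on spanning trees of $G_\dagger$, combined with the roots-to-$\dagger$ bijection between spanning trees of $G_\dagger$ and rooted spanning forests of $G$ that you make explicit. Your remark that the argument needs $\lambda/p(\lambda)\to 0$ is also on point: this is precisely the condition in Grimmett's UST limit (cluster weight over edge weight tending to zero) and is the regime the lemma tacitly intends, since the hypotheses as literally written (e.g.\ $p(\lambda)=\lambda$, $\gamma(\lambda)=q\lambda$) would not by themselves force concentration on spanning trees rather than spanning forests of $G_\dagger$.
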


Yet, if the UST can be seen as the ``static global random backbone'' of a given network, the forest process $(\Phi_q)_{q>0}$ represents its ``mesoscopic and dynamic'' analogue where the notion of locality is captured parametrically by what the RW sees on time-scale $1/q$. 
As such, it naturally leads to dynamic multi-scale approaches(see~\cite[Thm.2]{avena2018applications}), and new structures and questions which do not make sense within the more restrictive global and static UST context.

\subsubsection*{\bf Applications of rooted forest measure and LEP:}
In a series of recent works~\cite{avena2018applications,avena2018transfercurrent,avena2018randomforests} some general properties of the rooted forest mesure have been explored. For example, the roots~\cite[Prop.2.2]{avena2018applications} in $\Phi_q$ form a determinantal point process with kernel given by the RW Green's function, that is: for any $A\subset V$
\begin{equation}\label{DetR}
  \mathbb{P}( A \text{ is in the set of roots})= \det[K_q]_A,
\end{equation}
with $\left[K_{q}\right]_{A}$ being the restriction of the matrix $K_q:= q(qI-L)^{-1}$ to the set of indices in $A$.
The number of roots (or trees, or blocks in $\Pi_q$) is distributed as the sum of $n$ Bernoulli random variables with success probabilities $\frac{q}{q+\lambda_i}$, for $i\leq n$, with the $\lambda_i$'s being the eigenvalues of $-L$, or their real parts, see e.g.~\cite[Prop. 2.1]{avena2018applications}. Its mean number is monotonically increasing in $q$. Further, these roots turn out to be well-distributed in the given network~\cite[Thm.1]{avena2018applications} and, conditional on the induced partition, their joint law is determined by the stationary measures of the random walk $X$ restricted to each block of the underlying partition~\cite[Prop.2.3]{avena2018applications}. 
These and other features of the LEP have been recently exploited to build novel algorithms for the following different applications in data science: multiresolution scheme, wavelets basis and filters for signal processing on graphs~\cite{avena2020wavelet,pilavci2020tikhonov,pilavci2020smoothing}, estimate traces of discrete Laplacians and other diagonally dominant matrices~\cite{barthelme2019trace}, 
network renormalization~\cite{avena2021solutions,avena2018randomforests}, centrality measures~\cite{chebotarev1997mft} and statistical learning~\cite{avrachenkov2017learning}.
These applications give further motivations to explore this LEP in more detail. 
Let us also stress that on certain geometrical setups such as the integers, it would be of interest to study the LEP in connection to other random partitions and a natural line of investigation would be to study its intruguing dynamical structure~\cite[see Thm.2 and Sect~2.2]{avena2018applications} within the theory of coalescence-fragmentation processes. 

\subsubsection*{\bf Other forest measures:}
To conclude this introduction let us clarify that this \emph{rooted} forest $\Phi_q$ should not be confused with other forest measures that have been receiving a large amount of attention in the literature in relation to universality classes in statistical physics and to negatively correlated systems. In particular, when taking the (weak) infinite volume limit of the UST on $d$-dimensional lattices for $d> 4$ (and other transitive settings), depending on the boundary condition procedure when approaching the limit, the resulting measure concentrates on \emph{unrooted} forests referred to as wired or free spanning forests, see e.g.~\cite{pemantle1991ust,benjamini2001usf, benjamini2004geometry,hutchcroft2017indistinguishability,hutchcroft2018interlacements} and references therein.
On finite graphs another natural extension of the UST is obtained when considering the uniform measure on unrooted spanning forests. Properties of this other fascinating forest measure have been recently investigated in ~\cite{bedini2009hyperforest,bauerschmidt2021hyperbolic}.

\subsubsection*{\bf Results overview and paper structure:} 
The rest of the paper is organized as follows. The statements of our main results are organized in Section~\ref{Results}.  
We start in Subsection~\ref{CorRusso} by stating a general characterization of monotone events in $q$, Theorem~\ref{thm:derivative_probabilities}. Therein, we also introduce the 2-point correlation function (which will later be analyzed in different graph settings to study the emergent partition) and assert in Theorem~\ref{thm:monotone_correlations_symmetric} its monotonicity on undirected graphs. We then explore in details ths LEP measure, by specializing on certain classes of graphs. 

In Subsection~\ref{GenResults} we look at general weighted directed trees. For this class we further extend the monotonicity result from Theorem~\ref{thm:monotone_correlations_symmetric}, see Theorem~\ref{thm:monotone_correlations_trees}, and we present an inclusion-exclusion reduction formula on arbitrary finite trees, see Proposition~\ref{prop:inex}. In Subsection~\ref{lines} we focus on the LEP on the first $n$ integers where equipartitions are favored. Formulas for the partition function are first derived in Thm~\ref{thm:path_partition}, and extended to a ring, Corollary~\ref{prop:cycle_path_partition}. Theorem~\ref{pathLEP} gives a recursive representation of the pairwise correlation in terms of reduced partition functions and offers bounds in terms of the correspondent RW on the infinite line. The subsequent Corollary~\ref{thm:path_correct_scaling} shows explicit bulk and boundary asymptotics.  
Section~\ref{trees} is then devoted to the exploration of the emergent blocks and detection of simple modular structures in tree-like structures by tuning the scale parameter $q$. In particular, Proposition~\ref{Star} and Theorem~\ref{CommStar} look at a star graph without and with a community structure, respectively. Proposition~\ref{thm:correlation_asymptotics_bounded_vertices} and Theorem~\ref{thm:correlation_asymptotics_regular_hierarchical} show similar analysis on finite trees with different weighted structures in which for different magnitudes of $q$ different layers are detected. Finally, in Theorem~\ref{prop:tcg_partition} we consider asymptotic detection in a bottleneck graph with two variable-size connected complete subgraphs by combining the results on the segment, after suitable contraction, and those for the mean-field case obtained in~\cite{avena2019meanfield}. All proofs are organized in the remaining sections. 
\section{Main results: monotonicities \& emergent partition on sparse graphs}\label{Results}

\subsection{Monotonicity \& two-point LEP potential}\label{CorRusso}
A notoriously difficult issue for most of the measures that can be obtained from 
FK-percolation, is to establish monotonicity properties as a function of the involved parameters. 
Our first theorem, which is reminiscent of Russo's pivotality formula in percolation models~\cite{grimmett1999percolation,debernardini2015russo}, offers a general characterization of monotone events w.r.t. $\Phi_q$ as a function of $q$.

\begin{theorem}[\bf{Monotone events for the rooted forest on arbitrary networks}]\label{thm:derivative_probabilities}
 Let $G=(V,E,w)$ be a weighted directed graph, and let 
 $r_q:=r(\Phi_q)$ be the number of roots of the random rooted forest $\Phi_q$.
 Then, for any set of rooted forests $\mathcal{H} \subseteq \mathcal{F}$, it holds that the derivative w.r.t $q$ of the probability of the event $\Phi_q \in \mathcal{H}$ is given by
 \begin{equation}
  \frac{d}{dq}\P(\Phi_q \in \mathcal{H})=\tfrac{1}{q}\P(\Phi_q \in \mathcal{H})
  \big[\E[r_q \mid \Phi_q \in \mathcal{H}] - \E[r_q]\big].
 \end{equation}
\end{theorem}
This statement is proven in Section~\ref{proofsGeneral} and shows that monotone events in $q$ are those for which the difference $\E[r_q \mid \Phi_q \in \mathcal{H}] - \E[r_q]$ has a constant sign as $q$ varies. In practice it might be not straightforward to check the sign of this difference, since it requires control on the conditional distribution of $r_q$. Still, for specific events we believe this statement can be of great help, of which we give an example in the proof of Theorem~\ref{thm:monotone_correlations_symmetric}. 
We also mention that in \cite{avena2018applications} a coupled version of the forest\footnote{This coupling corresponds to an explicit Markovian coalescence-fragmentation process with values in $\mathcal{F}$ in which coalescence of trees is dominant but whenever the underlying building RW produces a loop, a tree gets fragmented into subtrees, see~\cite[see Thm.2  and Sect2.2]{avena2018applications}.} 
is constructed by means of an algorithm allowing to sample an entire forest trajectory $(\Phi_q)_{q\in[0,\infty)}$. Yet, this coupling is monotone only in mean, but not trajectory-wise, hence this coupling is not useful to characterize monotone events. 

As anticipated, our main interest within this work is to explore monotonicity properties of this loop-erased partitioning and its detailed structure on trees and nearly-one-dimensional geometries. To do so, we will mainly analyze 2-point correlations associated to $\Pi_q$, which we introduce next. 
For a pair of distinct vertices $x,y\in V$, consider the event that these vertices belong to different blocks in $\Pi_q$. That is, the event $$\{B_q(x)\neq B_q(y) \}:=\{x \text{ and } y \text{ are in different blocks of } \Pi_q\},$$ where $B_q(z)$ stands for the block in $\Pi_q$ containing $z\in V$. 
\begin{definition}[{\bf 2-point correlations or pairwise LEP-interaction potential}]\label{FIP}
For given $q>0$ and $G$, and any pair $x,y\in V$, we call \emph{pairwise LEP-interaction potential} the following probability: 
\begin{align}\notag
 U_q(x,y):=&\P(B_q(x)\neq B_q(y))\\ 
 &=\sum_{\gamma}\P^{LE_q}_x(\gamma)\P_y(\tau_\gamma>\tau_q)\label{LEdec}
\end{align}
where $\tau_q$ denotes an independent exponential random variable of rate $q$, 
$\P_z$ and $\P_z^{LE_q}$ stand for the laws of the RW $X$ and the corresponding loop-erased RW killed at rate $q$, respectively, starting from $z\in V$. Further, the above sum runs over all possible self-avoiding paths $\gamma$ starting at $x$ and $\tau_\gamma:=\inf\{t\geq0: X_t \cap \gamma \neq \varnothing\}$ 
is the random walk hitting time of the set of vertices in $\gamma$. 
\end{definition}

The representation in Eq.~\eqref{LEdec} is a consequence of Wilson's sampling procedure and it holds true since, remarkably, this algorithm is exchangeable with respect to the starting point of each loop-erased random walk launched along the algorithm steps~\cite{wilson1996generating}. Furthermore, we notice that, as for any generic random partition of $V$, such an interaction potential defines a distance on the vertex set. This specific metric $U_q(x,y)$ can be interpreted as an affinity measure capturing how densely connected vertices $x$ and $y$ are in the graph $G$.

Our second general result, \cref{thm:monotone_correlations_symmetric}, further explores monotonicities in $q$ when considering undirected networks. Since spanning rooted forests impose a directionality on its edges, it is convenient to interpret an undirected graph as a symmetric directed graph with a symmetric weight function, $w(x,y)=w(y,x)$ for  $(x,y) \in E$. For these symmetric graphs  \cref{thm:monotone_correlations_symmetric} states that the ``unoriented'' edge process, see~\eqref{MonEdgeSet}, as well as the LEP-interaction potential, see~\eqref{MonPotential}, are both monontone in $q$. 
To state the result about the edge process, we will use the following notation. For a directed edge $e=(x,y)$ write $e^{-}=(y,x)$ to denote its reversed edge, and let $\cbrac{\pm A \subseteq \Phi_q}=\bigcap_{e \in A}(\cbrac{e \in \Phi_q}\cup \cbrac{e^{-} \in \Phi_q})$ denote
 the event that for each edge $e \in A$ either $e$ or $e^{-}$ is present in the random rooted forest $\Phi_q$.

\begin{theorem}[\bf{Monotonicity of edges and 2-point correlations on undirected networks}] \label{thm:monotone_correlations_symmetric}
Consider a symmetric weighted directed graph $G = (V,E,w)$ and the rooted forest $\Phi_q$ on $G$ for $q\in[0,\infty)$. Let $A \subseteq E$ be a set of directed edges, then the function
  \begin{equation}\label{MonEdgeSet}
    q \mapsto \P(\pm A  \subseteq \Phi_q)
  \end{equation}
  is monotone non-increasing.
Furthermore, for any distinct $x, y \in V$, the function
	\begin{equation}\label{MonPotential} q \mapsto	U_q(x,y)
	\end{equation}
is continuous and non-decreasing
with $U_0(x,y)=0$ and $\lim_{q\to\infty}U_q(x,y)=1$.
\end{theorem}

\begin{remark}[{\bf Main open problem}]
That this potential is in fact monotone, as expressed in \eqref{MonPotential}, is rather subtle. For example in \cite{avena2019meanfield} this fact was only checked for specific geometries via lengthy computations, while this general statement settles it immediately. Our proof of \cref{thm:monotone_correlations_symmetric} will exploit the undirectedness assumption, but we believe such monotonicity to be valid in great generality, though this remains a delicate open problem. In \cref{thm:monotone_correlations_trees} it is shown that \eqref{MonPotential} also holds for arbitrary weighted directed trees.
On the other hand, while \cref{MonPotential} might very well hold for all weighted directed graphs, it is not difficult to find examples of non-symmetric graphs for which the monotonicity of the (unoriented) edge process in \eqref{MonEdgeSet} fails. As an example consider the unweighted directed graph on four vertices with directed edge set $E=\{(1,2),(1,4),(2,3),(3,4)\}$. Then it holds that
\begin{equation*}
\P(\pm \cbrac{(1,2)} \subseteq \Phi_q) = \P((1,2) \in \Phi_q) = \tfrac{q^3+2q^2}{q^4+4q^3+5q^2+q},
\end{equation*}
which is increasing for $q<\sqrt{3}-1$.
\end{remark}

\subsection{Two-point-correlation on trees}\label{GenResults}
We start here to discuss results specific to trees. Let us notice that in this setup, the analysis is facilitated by the absence of cycles. In general, the mapping from $\mathcal{F}$ to rooted partitions is not injective, while on trees this is the case. So, on trees a rooted forest induces a unique rooted partition. For example in the constant weight case $w\equiv 1$, for a partition into $m\leq |V|$ blocks $\pi_m=\{B_1,B_2,\ldots,B_m\}\in \mathcal P(V)$, the measure in \cref{LEPmeas} reads as 
$$\mathbb{P}(\Pi_q=\pi_m)=  \frac{q^m \prod_{i=1}^m |B_i|}{Z(q)}, $$
from which we see that, for a given $q$, it concentrates on partitions where the block sizes tend to be of the same order. In this sense equipartitions are favored.

The first result in this tree specific setting extends the monotonicity of the LEP potential, as expressed in \cref{thm:monotone_correlations_symmetric}, to a specific weighted directed setting. 
As will become clear in \cref{ssec:monotonicity_proofs_symmetric,ssec:monotonicity_proofs_trees}, the proof is different than that of \cref{thm:monotone_correlations_symmetric}, as it relies on the absence of cycles.
\begin{theorem}[\bf{Monotonicity of 2-point correlations on trees}] \label{thm:monotone_correlations_trees}
If $G = (V,E,w)$ is a weighted directed tree, then for all $x,y \in V$ the function 
	\begin{equation*} 
	q \mapsto	U_q(x,y)
	\end{equation*}
is monotone non-decreasing.
\end{theorem}

Next we derive a representation of the LEP potential on arbitrary trees, in terms of reduced partition functions over subtrees.

To avoid confusion, in each statement in the sequel we will add proper indices to the partition functions and LEP-potential specifying the considered graph. The distance $d(x,y)$ between two vertices $x$ and $y$ will refer to the unweighted shortest path distance, i.e. the minimum number of edges on an undirected path between the two vertices.

\begin{proposition}[\bf{Inclusion-exclusion for 2-point-correlation on trees}]\label{prop:inex}
Let $G = (V,E,w)$ be a weighted directed tree.
Fix $x, y \in V$ with $d(x,y)=d$ and let $(z_i)_{i=0}^d$ be the unique undirected path 
with $z_0 = x$ and $z_d = y$. 
For a subset $I \subseteq [d]$ let $G_{I}$ denote the graph obtained by removing all edges 
between $z_{i-1}$ and $z_i$ from $G$ for all $i \in I$.
Denote the $\abs{I}+1$ connected components of $G_{I}$ by $G^{1}_{I},\ldots,G^{\abs{I}+1}_{I}$.
Then, for every $q>0$, the following representation is valid
\begin{equation}
 U_q^{(G)}(x,y)=\frac{1}{Z_G(q)} \paren*{\sum_{k=1}^{d}(-1)^{k+1} \sum_{I \in \binom{[d]}{k}} \prod_{i=1}^{k+1} Z_{G_{I}^{i}}(q)}.
\end{equation}
Here $\binom{[d]}{k}$ denotes the collection of $k$-element subsets of $[d]$.

In particular for $x,y$ such that $d(x,y) = 1$:
\begin{equation}\label{eq:close_factor}
  U_q(x,y) = \frac{Z_x(q)Z_y(q)}{Z_G(q)},
\end{equation}
where $Z_x(q)$ and $Z_y(q)$ denote the partition functions of 
the two connected components of the graph obtained by removing the edges between $x$ and $y$.
\end{proposition}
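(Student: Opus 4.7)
The plan is to use the acyclicity of $G$ to translate the block-separation event into a union of edge-absence events along the unique path from $x$ to $y$, then apply standard inclusion-exclusion, and finally exploit the factorization of the partition function $Z$ over connected components.

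First I would establish the structural observation that, for any spanning rooted forest $F$ of the tree $G$, the blocks of the induced partition $\pi(F)$ are exactly the connected components of the undirected edge set underlying $F$. Because $G$ is a tree, there is a unique undirected simple path $z_0, z_1, \ldots, z_d$ between $x$ and $y$, so $x$ and $y$ lie in the same block of $\pi(F)$ if and only if, for every $i \in [d]$, at least one of the (at most two) directed edges between $z_{i-1}$ and $z_i$ belongs to $F$. Setting
\[
A_i := \{\Phi_q \text{ uses no edge between } z_{i-1} \text{ and } z_i\}, \qquad i \in [d],
\]
this yields $\{B_q(x) \neq B_q(y)\} = \bigcup_{i=1}^d A_i$, and standard inclusion-exclusion gives
\[
U_q^{(G)}(x,y) = \sum_{k=1}^d (-1)^{k+1} \sum_{I \in \binom{[d]}{k}} \P\!\left(\bigcap_{i \in I} A_i\right).
\]

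The next step is to compute each intersection probability. A forest $F \in \mathcal{F}(G)$ belongs to $\bigcap_{i \in I} A_i$ precisely when $F$ uses none of the directed edges between $z_{i-1}$ and $z_i$ for $i \in I$, i.e.\ when $F$ is a spanning rooted forest of the subgraph $G_I$. By the definition \eqref{RSF} this gives
\[
\P\!\left(\bigcap_{i \in I} A_i\right) = \frac{1}{Z_G(q)} \sum_{F \in \mathcal{F}(G_I)} q^{r(F)} w(F) = \frac{Z_{G_I}(q)}{Z_G(q)}.
\]
Since $G_I$ decomposes into its $|I|+1 = k+1$ connected components $G_I^1,\ldots,G_I^{k+1}$ and both the weight $w(F)=\prod_{e\in F}w(e)$ and the factor $q^{r(F)}$ are multiplicative across components (a spanning rooted forest of $G_I$ being an independent choice of one on each component), one obtains $Z_{G_I}(q) = \prod_{j=1}^{k+1} Z_{G_I^j}(q)$. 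Substituting back into the inclusion-exclusion expansion gives the announced formula; for $d(x,y)=1$ only the single term $k=1$, $I=\{1\}$ survives, producing $U_q(x,y) = Z_x(q)Z_y(q)/Z_G(q)$.

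The whole argument reduces to a combinatorial identity once the first step is in place. The main point that deserves careful verification is precisely that first step: on the directed weighted tree, block membership in $\pi(F)$ is indeed controlled by the presence, in either orientation, of every edge along the unique undirected $x$--$y$ path, so that $\{B_q(x) \neq B_q(y)\}$ coincides exactly with $\bigcup_i A_i$. Everything that follows is routine inclusion-exclusion together with the multiplicativity of the partition function under disjoint union of graphs.
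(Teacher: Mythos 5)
Your proposal is correct, and it takes a genuinely cleaner route than the paper's own proof. You observe that in a tree the unique undirected $x$--$y$ path is $z_0,\dots,z_d$, so the event $\{B_q(x)\neq B_q(y)\}$ is exactly the union $\bigcup_{i=1}^d A_i$ where $A_i$ is the event that no oriented edge between $z_{i-1}$ and $z_i$ is used; you then apply inclusion-exclusion over all $d$ edges at once and exploit the multiplicativity of $Z$ over connected components, since a spanning rooted forest of a disjoint union is an independent choice on each piece (both $w(F)=\prod_e w(e)$ and $q^{r(F)}$ factor). The paper instead proceeds by induction on $d$: the base case $d=1$ is treated by the same factorization argument, and the inductive step peels off one edge at a time using a pairwise inclusion-exclusion between $\{x\nleftrightarrow z_{d-1}\}$ and $\{z_{d-1}\nleftrightarrow y\}$ together with the spatial Markov property (their \cref{lem:spatial_markov_property}) to handle the conditioning. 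Your version avoids the Markov-property machinery entirely and makes the combinatorial identity visible in one pass, at the cost of having to justify the identification $\P(\bigcap_{i\in I}A_i)=Z_{G_I}(q)/Z_G(q)$ directly, which you do correctly since avoiding those edges is the same as being a spanning rooted forest of $G_I$ with identical weight and root count. Both arguments ultimately rest on the same two facts — acyclicity forcing the connection event to be an intersection of edge-presence events along the unique path, and factorization of the partition function over components — so the content is the same, but your presentation is shorter and more transparent.
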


\subsection{Integer partitioning: analysis on lines and rings}\label{lines}
In what follows we denote by $PG_n:=\Z\cap[1,n]$ the (undirected and unweighted) path-graph constituted by the first $n$ integers and by $CG_n$ the cycle-graph on $n$ vertices (i.e. the one dimensional discrete torus).

\begin{theorem}[\bf{Partition function of path-graphs}]\label{thm:path_partition}
The partition function in~\eqref{Z} of $PG_n$ can be expressed in the following ways:
 \begin{align}
 Z_{PG_n}(q) &= \sum_{k=1}^{n}\binom{n+k-1}{2k-1}q^{k} \label{eq:path_partition_combinatorial} \\
 &= \prod_{k=1}^{n}\paren*{q+2-2\cos\paren*{\tfrac{\pi(n-k)}{n}}} \label{eq:path_partition_spectral} \\
  &= \frac{q\paren*{q+2+\sqrt{q^2+4q}}^{n}-q\paren*{q+2-\sqrt{q^2+4q}}^{n}}{2^{n}\sqrt{q^2+4q}} \label{eq:path_partition_recurrence}\\
  &= qU_{n-1}(\tfrac{q}{2}+1) \label{eq:path_partition_chebyshev}.
\end{align}
Here $U_{n-1}$ denotes the $n-1$-th degree Chebyshev polynomial of the second kind.
\end{theorem}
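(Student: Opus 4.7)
My proof plan is to start from the Kirchhoff formula $Z_{PG_n}(q)=\det(qI-L)$ and recognize that $qI-L$ is the tridiagonal matrix with super- and sub-diagonal entries $-1$ and diagonal $(q+1,q+2,q+2,\ldots,q+2,q+1)$ (the two $q+1$'s reflecting the endpoints having degree $1$). Once this concrete description is in hand, the four identities become four ways of reading the same determinant, and the strategy is to prove the Chebyshev identity \eqref{eq:path_partition_chebyshev} first, then derive the spectral and closed-form expressions from it by standard Chebyshev identities, and finally obtain the combinatorial form either directly by counting or by extracting coefficients.

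For \eqref{eq:path_partition_chebyshev}, let $T_m(q)$ denote the determinant of the $m\times m$ tridiagonal matrix with diagonal $(q+2,\ldots,q+2)$ and off-diagonals $-1$, which satisfies the three-term recurrence $T_m=(q+2)T_{m-1}-T_{m-2}$ with $T_0=1$, $T_1=q+2$, so $T_m=U_m(q/2+1)$ by definition of the Chebyshev polynomials of the second kind. Set $S_m(q)=T_m(q)-T_{m-1}(q)$, which is the same determinant with the first diagonal entry replaced by $q+1$. Cofactor expansion of $Z_{PG_n}(q)$ along its first row gives $Z_{PG_n}(q)=(q+1)S_{n-1}-S_{n-2}$, since the second minor reduces to $-S_{n-2}$ after a further expansion along the first column. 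Substituting $S_m=T_m-T_{m-1}$ and using the recurrence for $T_{n-1}$ collapses the expression to $qT_{n-1}(q)=qU_{n-1}(q/2+1)$.

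From this Chebyshev identity the other two analytic expressions follow by well-known manipulations. Writing $q/2+1=(r+r^{-1})/2$ with $r=(q+2+\sqrt{q^2+4q})/2$ and using $U_{n-1}(\cos\theta)\sin\theta=\sin(n\theta)$, i.e.\ $U_{n-1}\!\bigl(\tfrac{r+r^{-1}}{2}\bigr)=(r^n-r^{-n})/(r-r^{-1})$, gives \eqref{eq:path_partition_recurrence} after multiplying numerator and denominator by $2^n$ and observing $r-r^{-1}=\sqrt{q^2+4q}$. For \eqref{eq:path_partition_spectral}, I would invoke the factorization $U_{n-1}(x)=\prod_{k=1}^{n-1}\bigl(2x-2\cos(k\pi/n)\bigr)$ evaluated at $x=q/2+1$; the leading factor $q$ is absorbed as the $k=n$ term $q+2-2\cos 0=q$, and relabeling $k\mapsto n-k$ yields the product as stated. (These eigenvalues can alternatively be verified directly as eigenvalues of $L_{PG_n}$ with Neumann-type boundary, which is a useful sanity check.)

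For the combinatorial form \eqref{eq:path_partition_combinatorial}, the plan is to interpret $Z_{PG_n}(q)=\sum_{k\ge1}q^k f_{n,k}$ where $f_{n,k}$ counts spanning rooted forests of $PG_n$ with exactly $k$ trees. Since $PG_n$ is a path, any such forest is obtained by choosing $k-1$ of the $n-1$ edges to delete, which partitions the vertices into $k$ consecutive blocks of sizes $(\ell_1,\ldots,\ell_k)$ with $\ell_1+\cdots+\ell_k=n$, and then independently choosing a root in each block, contributing $\ell_1\cdots\ell_k$. The identity $\sum_{\ell_1+\cdots+\ell_k=n,\ \ell_i\ge 1}\ell_1\cdots\ell_k=\binom{n+k-1}{2k-1}$ is then a one-line generating-function computation: the univariate generating function of $\ell\mapsto\ell$ on positive integers is $x/(1-x)^2$, so the required sum is $[x^n]\bigl(x/(1-x)^2\bigr)^k=[x^{n-k}](1-x)^{-2k}=\binom{n+k-1}{2k-1}$. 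The main obstacle, if any, is just bookkeeping in the cofactor expansion that produces the clean Chebyshev reduction; the remaining steps are standard once that identity is in place.
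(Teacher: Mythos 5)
Your proposal is correct, but it reaches the four identities by a genuinely different route than the paper. You work directly with the tridiagonal matrix $qI-L$: you prove the Chebyshev form \eqref{eq:path_partition_chebyshev} first by cofactor expansion (the reduction $(q+1)S_{n-1}-S_{n-2}=qT_{n-1}$ is correct; just note it needs $n\ge 3$, with $n=1,2$ checked by hand, and that the $(1,1)$ minor has its $q+1$ entry at the opposite corner, equal to $S_{n-1}$ by symmetry), then deduce \eqref{eq:path_partition_recurrence} from $U_{n-1}\bigl(\tfrac{r+r^{-1}}{2}\bigr)=\tfrac{r^n-r^{-n}}{r-r^{-1}}$ and \eqref{eq:path_partition_spectral} from the root factorization of $U_{n-1}$, and finally prove \eqref{eq:path_partition_combinatorial} by directly counting rooted forests of the path (ordered compositions with a root choice per block) via the generating function $\bigl(x/(1-x)^2\bigr)^k$. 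The paper instead first derives the three-term recurrence $Z_n=(q+2)Z_{n-1}-Z_{n-2}$ probabilistically, from its graph-extension lemma applied to a boundary vertex being a root or not, proves the binomial formula by induction on $n$ with Pascal's rule, gets the closed form by solving the linear recurrence, cites the known path-Laplacian spectrum for the product formula, and only at the end matches everything with Chebyshev polynomials. Your approach buys a self-contained linear-algebra-plus-combinatorics proof with the Chebyshev identity as the organizing center and a bijective explanation of the binomial coefficients; the paper's approach buys consistency with the forest-measure machinery (the extension lemma and the rooting-event identities it yields are reused in Lemma~\ref{lem:path_root_measure}, Corollary~\ref{prop:cycle_path_partition} and the correlation formulas), at the cost of an inductive rather than bijective verification of \eqref{eq:path_partition_combinatorial} and an external citation for the spectrum.
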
 

As can be appreciated in the proof, the above different representations reflect different computational methods suited for the random forest.
We notice that for $q=1$ evaluating this partition function corresponds to counting the number of rooted forests of the path-graph, as previously derived in~\cite{chebotarev2008goldenratio}.

One of the messages of this paper is that having an explicit characterization of a simple given geometry can be useful to derive information on some more involved geometry. The next corollary shows one such very simple instance by expressing the partition function on the torus in terms of partition functions of the simpler path-graph.

\begin{corollary}[\bf {Partition function of cycle-graphs}]\label{prop:cycle_path_partition}
 %Let $PG_n$ and $CG_n$ denote a path-graph and a cycle-graph on $n$ vertices. 
 The partition function of $CG_n$ is given by
 \begin{align}
  Z_{CG_n}(q) &= Z_{PG_n}(q)+\tfrac{2}{q}\left[Z_{PG_n}(q)-Z_{PG_{n-1}}(q)\right]-2 \label{eq:cycle_partition_path} \\
  &=\sum_{k=1}^{n}\paren*{\binom{n+k}{2k}+\binom{n+k-1}{2k}}q^k. \label{eq:cycle_partition_combinatorial}
 \end{align}
\end{corollary}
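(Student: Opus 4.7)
The strategy is to establish the differential identity $qZ_{CG_n}'(q) = nZ_{PG_n}(q)$ by a short combinatorial argument, integrate it using the initial datum $Z_{CG_n}(0) = 0$, and then translate the resulting closed form into the two stated expressions by elementary binomial identities.

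For the differential identity, note that $CG_n$ has $n$ vertices and $n$ edges, so every spanning rooted forest $F$ satisfies $r(F) = n - |F|$, which is exactly the number of edges of $CG_n$ lying outside $F$. Double counting yields
\[
q Z_{CG_n}'(q) = \sum_{F} r(F)\, q^{r(F)} = \sum_{e \in E(CG_n)} \sum_{F \not\ni e} q^{r(F)} = \sum_{e \in E(CG_n)} Z_{CG_n \setminus \{e\}}(q) = n\, Z_{PG_n}(q),
\]
the last equality because removing any single edge from $CG_n$ yields a graph isomorphic to $PG_n$. Integrating on $[0,q]$, using $Z_{CG_n}(0) = 0$ (since $r(F) \geq 1$ for every forest) and inserting \eqref{eq:path_partition_combinatorial}, one gets
\[
Z_{CG_n}(q) = n \int_0^q \frac{Z_{PG_n}(t)}{t}\, dt = \sum_{k=1}^n \frac{n}{k}\binom{n+k-1}{2k-1} q^k.
\]

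The remainder is binomial bookkeeping. Equation \eqref{eq:cycle_partition_combinatorial} follows from
\[
\binom{n+k}{2k}+\binom{n+k-1}{2k} = \binom{n+k-1}{2k-1}+2\binom{n+k-1}{2k} = \left(1+\tfrac{n-k}{k}\right)\binom{n+k-1}{2k-1} = \tfrac{n}{k}\binom{n+k-1}{2k-1},
\]
combining Pascal's rule with $\binom{n+k-1}{2k} = \tfrac{n-k}{2k}\binom{n+k-1}{2k-1}$. For \eqref{eq:cycle_partition_path}, I compare coefficients: for $k \geq 1$, the coefficient of $q^k$ on the right-hand side is $\binom{n+k-1}{2k-1} + 2\binom{n+k-1}{2k}$, where the second summand arises from $\tfrac{2}{q}(Z_{PG_n}(q) - Z_{PG_{n-1}}(q))$ via Pascal, and this matches $\tfrac{n}{k}\binom{n+k-1}{2k-1}$ by the identity just proved; the constant $-2$ exactly cancels the $+2$ contributed by $\tfrac{2}{q}(Z_{PG_n} - Z_{PG_{n-1}})$ at $q = 0$.

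No step poses a serious obstacle: the combinatorial derivative identity is the only conceptual input, and everything else is algebra. An alternative would be to derive $Z_{CG_n}(q) = 2T_n(q/2+1) - 2$ from the circulant spectrum of $L_{CG_n}$ and combine it with the Chebyshev identity $T_n(y) = y U_{n-1}(y) - U_{n-2}(y)$ applied to \eqref{eq:path_partition_chebyshev}; that route works equally well but requires more ambient machinery.
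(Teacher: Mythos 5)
Your argument is correct, and it follows a genuinely different route from the paper's. The paper fixes a vertex $x$ of the cycle, splits $Z_{CG_n}(q)$ according to whether $x$ is a root, deletes $x$ to reduce to $PG_{n-1}$, and then invokes \cref{lem:graph_extension_single_vertex} together with the rooting-event formulas of \cref{lem:path_root_measure} and the recurrence \eqref{eq:path_laplacian_recurrence}; this produces \eqref{eq:cycle_partition_path} directly, with \eqref{eq:cycle_partition_combinatorial} then following by Pascal. You instead prove the differential identity $qZ_{CG_n}'(q)=nZ_{PG_n}(q)$ by double counting, integrate, and read off both expressions by binomial algebra. Your double count is sound, but it hinges on a feature special to the cycle: since $|E(CG_n)|=|V(CG_n)|=n$, the number of roots $r(F)=n-|F|$ coincides with the number of undirected edges of $CG_n$ unused by $F$ (each undirected edge can be used in at most one orientation in a rooted forest, so there is no double use to worry about), and ``removing $e$'' must be read as removing both orientations, which indeed yields $PG_n$; the identity $\sum_{F\not\ni e}q^{r(F)}=Z_{CG_n-e}(q)$ is then just the deletion half of \eqref{eq:deletion_contraction}, and $qZ'(q)=\E[r_q]Z(q)$ is the same relation exploited in the proof of \cref{thm:general_monotonicity}. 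The integration step and the identities $\binom{n+k}{2k}+\binom{n+k-1}{2k}=\tfrac{n}{k}\binom{n+k-1}{2k-1}$ and the coefficient comparison for \eqref{eq:cycle_partition_path} (including the cancellation of the constant $2$) all check out, e.g.\ for $n=3$ both give $q^3+6q^2+9q$. What each approach buys: yours is shorter, needs only \eqref{eq:path_partition_combinatorial} as input, and yields the clean closed form $Z_{CG_n}(q)=\sum_{k=1}^{n}\tfrac{n}{k}\binom{n+k-1}{2k-1}q^k$, but it is tailored to the cycle and obtains \eqref{eq:cycle_partition_path} only by matching coefficients; the paper's vertex-deletion argument is heavier but structural, gives the two summands of \eqref{eq:cycle_partition_path} a probabilistic meaning (rooted at $x$ versus not), and exercises the extension lemmas that are reused throughout the rest of the paper.
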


\begin{theorem}[\bf{Correlations on path-graph and bounds via random walk on $\Z$}]\label{pathLEP}
Let $x,y \in [n]$ be two vertices in $PG_n$ at distance $d:=y-x>0$.
Then, for any $q>0$, the 2-point correlation between $x$ and $y$ is given by
\begin{equation}\label{eq:path_correlation}
\correlation[PG_n]{q}{x}{y}=1-\frac{Z_{PG_{n-d}}(q)}{Z_{PG_{n}}(q)}-\frac{d \left[Z_{PG_x}(q)-Z_{PG_{x-1}}(q)\right]
\left[Z_{PG_{n-y+1}}(q)-Z_{PG_{n-y}}(q)\right]}{qZ_{PG_n}(q)}. 
\end{equation} 
Moreover, by denoting with $S=(S_m)_{m \in \N_0}$ the discrete-time simple random walk on $\Z$ starting at $0$, the following bounds are satisfied
\begin{equation} 
 \paren*{1-\paren*{\tfrac{2}{2+q}}^{m}}^2\paren*{2\P(|S_{m}| < \tfrac{d}{2})-1}^2 \leq
 \correlation[PG_n]{q}{x}{y} \leq 1- \P(|S_{m}| > d) \paren*{\tfrac{2}{2+q}}^{m},
\end{equation}
where the upper bound is valid for any $m\in\N$, while the lower bound holds for $m$ such that $\P(|S_{m}| < \tfrac{d}{2}) \geq \tfrac{1}{2}$.
\end{theorem}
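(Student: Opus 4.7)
Since $PG_n$ is a tree, $x$ and $y$ lie in the same block of $\Pi_q$ iff each of the $d$ edges $S=\{(x,x+1),\dots,(y-1,y)\}$ on the $x$-$y$ path belongs to $\Phi_q$, and so
\[
1-\correlation[PG_n]{q}{x}{y}=\frac{1}{Z_{PG_n}(q)}\sum_{F\in\mathcal F:\,S\subseteq F}q^{r(F)}w(F).
\]
I would compute the numerator via a contraction bijection: collapsing $\{x,\dots,y\}$ into a single vertex $m$ at position $x$ turns $PG_n$ into the path graph $PG_{n-d}$; each rooted forest $F$ of $PG_n$ with $S\subseteq F$ descends to a rooted forest $F'$ of $PG_{n-d}$ with weight and number of roots preserved. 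The descent is $(d+1)$-to-one when $m$ is a root of $F'$ (one lift per choice of root in $\{x,\dots,y\}$), and one-to-one otherwise. Hence
\[
\sum_{F:\,S\subseteq F}q^{r(F)}w(F)=Z_{PG_{n-d}}(q)+d\cdot Z^{\star}(q),
\]
where $Z^{\star}(q)$ sums over rooted forests of $PG_{n-d}$ in which $m$ is a root. Decomposing such a forest along the interval $[a,b]\ni x$ spanned by the tree containing $m$ gives $Z^{\star}(q)=q\,\widetilde S_{x-1}\,\widetilde S_{n-y}$, with $\widetilde S_k:=1+\sum_{j=1}^{k}Z_{PG_j}(q)$. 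Finally, from the Binet representation \eqref{eq:path_partition_recurrence} (with $\alpha\beta=1$, $\alpha+\beta=q+2$) a telescoping geometric sum yields
\[
\widetilde S_k=\frac{Z_{PG_{k+1}}(q)-Z_{PG_k}(q)}{q},
\]
and substitution produces exactly \eqref{eq:path_correlation}.

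\textbf{Upper bound.} From \eqref{LEdec} and the fact that $x\in\gamma$ for every LERW $\gamma$ starting at $x$, one has $\correlation[PG_n]{q}{x}{y}\leq \P_y(\tau_x>\tau_q)=1-\P_y^{PG_n}(\tau_x<\tau_q)$, so it suffices to show $\P_y^{PG_n}(\tau_x<\tau_q)\geq(2/(q+2))^m\P(|S_m|>d)$ for every $m$. I would first establish, by an edge-cutting (harmonic function) monotonicity, that $\E_y^{PG_n}[e^{-q\tau_x}]\geq \E_y^{\Z}[e^{-q\tau_x}]$: deleting the edges of $\Z$ to the right of $n$ confines the walk closer to $x$ and so can only decrease $\tau_x$ on average. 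Embedding the continuous-time SRW on $\Z$ so that each inter-jump time is $\mathrm{Exp}(q+2)$ gives $\E_y^{\Z}[e^{-q\tau_x}]=\E[(2/(q+2))^{K}]$ with $K$ the discrete hitting time of $x=y-d$ by the SRW from $y$. Bounding $(2/(q+2))^{K}\geq(2/(q+2))^{m}\mathbf 1\{K\leq m\}$ and using the reflection identity $\P(\min_{k\leq m}S_k\leq -d)=\P(\max_{k\leq m}S_k\geq d)\geq\P(|S_m|>d)$ closes the argument.

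\textbf{Lower bound.} A sufficient condition for $B_q(x)\neq B_q(y)$ is that two independent killed walks $X^{(x)},X^{(y)}$ have their entire trajectories contained in the disjoint intervals $(x-d/2,x+d/2)$ and $(y-d/2,y+d/2)$ respectively; under this event Wilson's algorithm (started at either vertex) delivers two distinct trees. By independence and the pathwise symmetry of the one-walk confinement event,
\[
\correlation[PG_n]{q}{x}{y}\geq\brac*{\P\bigl(\sup\nolimits_{k\leq\kappa}|X^{(x)}_k-x|<d/2\bigr)}^{2},
\]
where $\kappa$ is the killing step. I would bound the single-walk probability below by $\P(\kappa\leq m)\cdot\P(\sup_{k\leq m}|S_k|<d/2)$, with (i) $\P(\kappa\leq m)\geq 1-(2/(q+2))^m$ since per-step survival on $PG_n$ is at most $2/(q+2)$ (the boundary values $1/(q+1)$ are strictly smaller), and (ii) the two-sided reflection principle $\P(\sup_{k\leq m}|S_k|<d/2)\geq 2\P(|S_m|<d/2)-1$, which is non-negative precisely under the stated hypothesis. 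Squaring then gives the announced lower bound.

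\textbf{Main obstacle.} Part 1 is largely combinatorial, with the telescoping identity for $\widetilde S_k$ extracted from Theorem~\ref{thm:path_partition} as the key algebraic input. The substantive work lies in Part 2, specifically in justifying the monotonicity $\E_y^{PG_n}[e^{-q\tau_x}]\geq \E_y^{\Z}[e^{-q\tau_x}]$ used for the upper bound: truncating $\Z$ to $PG_n$ removes right-going escapes of the walk (favourable) while simultaneously raising the per-step death probability at the new boundary $n$ (unfavourable), and one must show the first effect always dominates. A monotone coupling at the first time the two walks would disagree, combined with the strong Markov property, should suffice; alternatively, one may argue directly via the spectral decomposition provided by Theorem~\ref{thm:path_partition}.
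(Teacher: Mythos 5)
Your proposal follows the paper's route in all three parts, and the substance is correct.

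\textbf{Part 1.} Your contraction bijection (collapsing $\{x,\dots,y\}$ to a vertex $m$ in $PG_{n-d}$, with the fibre being $(d+1)$-to-one over forests rooting $m$ and one-to-one otherwise) is exactly the paper's counting identity $|\mathcal C_n^k(x,y)| = |\mathcal F_{n-d}^k| + d|\mathcal R_{n-d}^k(m)|$. Your evaluation of $Z^\star(q)$ by decomposing along the interval spanned by $m$'s tree, and the telescoping identity $\widetilde S_k = (Z_{PG_{k+1}}-Z_{PG_k})/q$, is a valid alternative to the paper's Lemma~\ref{lem:path_root_measure} (which produces the same factor as $\tfrac1q\NNM^{(x)}(b\in R_q)\NNM^{(n-y+1)}(b\in R_q)$), so no gap here.

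\textbf{Bounds.} Here you match the paper's strategy — Wilson's algorithm with the first walk launched at $x$, the second at $y$, and comparison with the simple random walk on $\Z$ in the discrete-time skeleton with uniform geometric killing $\tilde\tau_q\sim\mathrm{Geom}(q/(q+2))$. Two remarks. (1) The monotonicity $\E_y^{PG_n}[e^{-q\tau_x}]\geq\E_y^{\Z}[e^{-q\tau_x}]$, which you flag as the main obstacle, is obtained in the paper by the reflection/unfolding identity $\tau_x^{PG_n}\stackrel{d}{=}\min\{\tau_S(-d),\tau_S(2n+d-2y+1)\}$: the path walk is the fold $g(S)$ of a free $\Z$-walk (folding at the half-integer boundaries, so the self-loop at the boundary vertex is just laziness and does not affect the continuous-time law), hence $\tau_x^{PG_n}\leq\tau_S(-d)$ \emph{pathwise}. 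This kills the ``two competing effects'' worry outright: there is no tension to weigh, just a pointwise bound. In the skeleton picture the per-step killing probability is $q/(q+2)$ at \emph{all} vertices, so there is no boundary penalty to offset; the potential penalty you describe only appears if you compare jump chains of different hold-rates, which is the wrong discretisation. (2) In the lower bound, the ``pathwise symmetry'' you invoke to equate the confinement probabilities for the walks from $x$ and from $y$ is not literally true on $PG_n$ (the two vertices sit at different distances from the boundary). The fix is exactly what the paper does: bound each factor from below by the same $\Z$-quantity $\bigl(2\P(|S_m|<d/2)-1\bigr)\bigl(1-(2/(2+q))^m\bigr)$ using $|g(S_k+u)-u|\leq|S_k|$ (the fold map is $1$-Lipschitz), and then take the product; no symmetry is needed.

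With those two adjustments — replacing the abstract monotonicity lemma by the explicit unfolding identity, and replacing ``symmetry'' by a uniform comparison to the $\Z$-walk — your proposal coincides with the paper's proof.
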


From the above statement, due to the diffusive behavior of the simple random walk $S$, it is clear that the correlation function between two points in a segment is non-degenerate when $q_n$ scales with the inverse square distance between the two points. 
The next corollary makes this statement precise and shows that boundary effects emerge neatly from the asymptotic analysis.

\begin{corollary}[\bf{Non-degenerate scaling and asymptotic boundary effects}]\label{thm:path_correct_scaling}
For each $n \in \N$ let $x_n$ and $y_n$ be vertices in $PG_n$. 
Let $d_n$ denote the distance between these vertices and let $(q_n)_{n \in \N}$ be a monotone sequence of positive parameters.
Then, if the limit $\lim_{n\to\infty}U^{(PG_n)}_{q_n}(x_n, y_n)$ exists, it holds that
$$ \lim_{n\to\infty}U^{(PG_n)}_{q_n}(x_n, y_n)\in (0,1) 
 \text{            if and only if             }  
 q_n = \tfrac{c}{d_n^2}+\smallO(\tfrac{1}{d_n^2}) \text{ for some constant $c>0$.}$$

In particular, fix $\delta > 0$  and let $(\zeta_n)_{n \in \N}$ be a sequence such that 
 $\zeta_n \in [\delta\sqrt{n}, n-\delta\sqrt{n}]$ for large enough $n$.
Set $x_n=\zeta_n-\delta\sqrt{n}+\smallO(\sqrt{n})$, $y_n=\zeta_n+\delta\sqrt{n}+\smallO(\sqrt{n})$
and $q_n\sim \tfrac{1}{d_n^2}$, then the following two limits, distinguishing between the bulk and near the boundaries, are possible:

\begin{equation}\label{eq:path_asymptotics_root_distance}
 \lim_{n\to\infty}
 \correlation[PG_n]{q_n}{x_n}{y_n} =
 \begin{cases}
1-\frac{3}{2e}
 & \text{ if  } \ \zeta_n = \smallOmega(\sqrt{n}) \text{ and } \zeta_n = n-\smallOmega(\sqrt{n}) \\
1-\frac{3}{2e}-\frac{1}{2}e^{-\frac{\alpha}{\delta}}
 & 
\text{ if  } \ \zeta_n = \alpha \sqrt{n}+\smallO(\sqrt{n}) \text{  or  } \zeta_n = n -\alpha \sqrt{n}+\smallO(\sqrt{n})
\text{ for some } \alpha\geq\delta.
\end{cases}
\end{equation}
\end{corollary}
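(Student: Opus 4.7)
I would combine the correlation formula~\eqref{eq:path_correlation} from Theorem~\ref{pathLEP} with the closed-form expression for the path-graph partition function given in~\eqref{eq:path_partition_recurrence}, and then extract the small-$q$, large-$n$ asymptotics. Writing $\lambda_\pm(q) := (q+2\pm\sqrt{q^2+4q})/2$ for the two characteristic roots, a Taylor expansion as $q\downarrow 0$ gives $\log\lambda_\pm(q) = \pm\sqrt{q} + O(q^{3/2})$ and $\sqrt{q^2+4q} = 2\sqrt{q}\bigl(1+O(\sqrt{q})\bigr)$. Plugging these into the closed form produces the two building blocks
\begin{equation*}
 Z_{PG_m}(q) \;\sim\; \sqrt{q}\,\sinh\bigl(m\sqrt{q}\bigr),\qquad
 Z_{PG_m}(q)-Z_{PG_{m-1}}(q) \;\sim\; q\,\cosh\bigl(m\sqrt{q}\bigr),
\end{equation*}
valid uniformly on regimes where $mq\to 0$, and this is what I would feed into every factor appearing in~\eqref{eq:path_correlation}.

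\textbf{The iff statement.} Assuming the limit of $U^{(PG_n)}_{q_n}(x_n,y_n)$ exists, I would handle the scaling dichotomy using the bounds of Theorem~\ref{pathLEP} rather than the explicit formula. If $q_n d_n^2 \to 0$, plug the upper bound at $m_n = M_n d_n^2$, with $M_n\to\infty$ slowly enough that $M_n q_n d_n^2 \to 0$: by the CLT for $S_{m_n}$ (whose diffusive length $\sqrt{m_n}=\sqrt{M_n}\,d_n$ is much larger than $d_n$) one has $\P(|S_{m_n}|>d_n)\to 1$, while $(2/(2+q_n))^{m_n} = e^{-m_n q_n/2+o(1)} \to 1$, so $U\to 0$. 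Symmetrically, if $q_n d_n^2 \to\infty$, apply the lower bound with $m_n = d_n^2/C$ for $C$ large: the Gaussian tail makes $\P(|S_{m_n}|<d_n/2)$ arbitrarily close to $1$, while $m_n q_n\to\infty$ sends $(2/(2+q_n))^{m_n}$ to $0$, forcing $U\to 1$. The only remaining consistent regime is $q_n d_n^2\to c\in(0,\infty)$, i.e.\ $q_n = c/d_n^2 + o(1/d_n^2)$, and that it produces a limit strictly inside $(0,1)$ will follow from the explicit computation below.

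\textbf{Explicit limits.} Under $q_n\sim 1/d_n^2$ one has $d_n\sqrt{q_n}\to 1$ and $n\sqrt{q_n}\to\infty$ (since $d_n\le 2\delta\sqrt{n}$ yields $n\sqrt{q_n}\ge \sqrt{n}/(2\delta)$). The first term of~\eqref{eq:path_correlation} reduces via the $\sinh$ asymptotic to $\sinh((n-d_n)\sqrt{q_n})/\sinh(n\sqrt{q_n}) \to e^{-1}$. For the second term, the prefactor computation $d_n q_n^2 / (q_n \sqrt{q_n}) = d_n \sqrt{q_n}\to 1$ leaves the ratio $\cosh(x_n\sqrt{q_n})\,\cosh((n-y_n)\sqrt{q_n})\,/\,\sinh(n\sqrt{q_n})$. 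In the bulk case both $x_n\sqrt{q_n}$ and $(n-y_n)\sqrt{q_n}$ diverge; writing the two $\cosh$s and the $\sinh$ as half-exponentials, the surviving exponent is $(x_n+n-y_n-n)\sqrt{q_n} = -d_n\sqrt{q_n}\to -1$, giving $e^{-1}/2$ and hence the total limit $1-e^{-1}-e^{-1}/2 = 1-3/(2e)$. In the boundary case $\zeta_n = \alpha\sqrt{n}+o(\sqrt{n})$ with $\alpha\ge\delta$, only one argument diverges: $x_n\sqrt{q_n}\to(\alpha/\delta-1)/2$ is finite, while $(n-y_n)\sqrt{q_n}$ and $n\sqrt{q_n}$ still blow up with difference $-y_n\sqrt{q_n}\to -(\alpha/\delta+1)/2$. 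Expanding the stable $\cosh$ as $(e^{(\alpha/\delta-1)/2}+e^{-(\alpha/\delta-1)/2})/2$ and collecting the two surviving exponentials after cancellation against $\sinh$ produces $(e^{-1}+e^{-\alpha/\delta})/2$, yielding $1-e^{-1}-(e^{-1}+e^{-\alpha/\delta})/2 = 1-3/(2e) - e^{-\alpha/\delta}/2$.

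\textbf{Main obstacle.} The principal technical care is in controlling the $O(q^{3/2})$ errors hidden in $\lambda_\pm(q)^m = e^{\pm m\sqrt{q}+O(m q^{3/2})}$ uniformly over $m\in\{x_n,y_n-1,n-y_n,n-d_n,n\}$; the scaling $q_n\sim 1/d_n^2$ with $d_n=O(\sqrt{n})$ gives $n q_n^{3/2}=O(1/\sqrt{n})\to 0$, which is exactly what is needed. A secondary subtlety is the boundary case, where the cancellation between $\cosh$ and $\sinh$ is only partial: it is cleanest to work throughout in terms of $e^{\pm\cdot}$ and track the two surviving terms separately rather than to use hyperbolic identities, which otherwise obscure the boundary contribution $e^{-\alpha/\delta}/2$.
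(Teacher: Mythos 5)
Your treatment of the explicit limits is correct and is essentially the paper's own argument in different notation: the paper likewise plugs the closed form \eqref{eq:path_partition_recurrence} into \eqref{eq:path_correlation} and proves three limits for powers of the characteristic-root ratio, which are exactly your $\sinh/\cosh$ asymptotics; your bulk and boundary values $1-\tfrac{3}{2e}$ and $1-\tfrac{3}{2e}-\tfrac12 e^{-\alpha/\delta}$ come out the same way. Your dichotomy $q_nd_n^2\to 0$ versus $q_nd_n^2\to\infty$ also mirrors the paper's use of the random-walk bounds of \cref{pathLEP}, with a different but equivalent choice of $m_n$. One wording slip: the $\sinh/\cosh$ approximations are not ``valid uniformly where $mq\to0$'' --- at $m=n$ one has $nq_n\to 1/(4\delta^2)\neq 0$ --- but the condition you actually verify at the end, $nq_n^{3/2}\to 0$ (which requires $d_n=\bigOmega(\sqrt n)$, not merely $d_n=\bigO(\sqrt n)$), is the right one, so this is cosmetic in the setting of \eqref{eq:path_asymptotics_root_distance}.

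The genuine gap is in the ``if'' direction of the equivalence: you assert that the critical scaling $q_n=\tfrac{c}{d_n^2}+\smallO(\tfrac{1}{d_n^2})$ yields a limit strictly inside $(0,1)$ and defer this to ``the explicit computation below'', but that computation only covers the special second half of the corollary, where $d_n\sim 2\delta\sqrt n$ and $x_n,y_n$ sit at prescribed bulk/boundary positions. The equivalence is claimed for arbitrary $x_n,y_n$; in particular bounded $d_n$ with $q_n$ of constant order is a legitimate instance of the critical scaling. When $d_n\to\infty$ your tools do close the case (apply both bounds of \cref{pathLEP} with $m_n\asymp d_n^2$ and suitable constants to pin $U_{q_n}$ away from $0$ and $1$), but you never say so; and when $d_n$ stays bounded the lower bound of \cref{pathLEP} can be inapplicable altogether, since its hypothesis $\P(|S_m|<\tfrac{d}{2})\geq\tfrac12$ gives only a vacuous bound for $d_n\in\{1,2\}$ (there $\P(|S_m|<\tfrac{d}{2})=\P(S_m=0)\leq\tfrac12$ for all $m\geq1$). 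The paper treats this bounded-$d_n$ subcase separately with direct estimates --- a lower bound $\tfrac{q_n^2}{(2+q_n)^2}$ from both Wilson walks being killed at their first step, and an upper bound $1-\paren*{\tfrac{1}{2+q_n}}^{M}\tfrac{q_n}{2+q_n}$ from the walk marching the bounded distance and then dying --- together with the observation that $q_n$ is bounded away from $0$ and $\infty$ in this regime. You need this (or an equivalent elementary argument) to complete the critical case of the iff statement.
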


In the above statement we computed the exact asymptotics only when the distance of the two vertices scales as the square root of $n$. Similar exact computations can be derived for other choices of the magnitude of this distance. We refer the interested reader to 
\cite{koperberg2020lep} for analogous statements in the cases when $d_n$ stays of order one or diverges linearly.
In particular, we note that \emph{giants} (i.e. blocks of order $|V|$) appear at scale $q_n\sim d_n^{-2}$ and a unique giant emerges as soon as $q_n= o(n^{-2})$.

\subsection{Detecting modular structures in stylized tree-like geometry}\label{trees}
We collect here a number of simple statements of different flavour aiming to illustrate that in tree-like graphs the emergence of giants and other modular structures can be detected with high probability by tuning $q$. 
\Cref{fig:overview_detectability} gives a graphical overview of the main results in this section, which are given in 
\cref{CommStar,thm:correlation_asymptotics_regular_hierarchical,prop:tcg_partition}. We start in \cref{Star} by making precise how $q$ scales on a given large star graph.

\begin{figure}[h!ptb]
\centering
    \begin{subfigure}{0.3\textwidth}
        \includegraphics[width=\hsize]{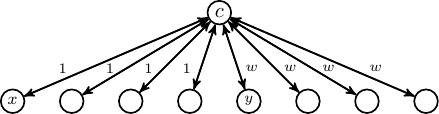}
        \caption{\scriptsize A community star graph with $n=9$ vertices, with center vertex $c$ and $k=4$ vertices in the weight-1 community. The remaining four vertices belong to the weight-$w$ community.}
        \label{fig:community_star}
    \end{subfigure}
\hfill
    \begin{subfigure}{0.3\textwidth}
    \includegraphics[width=\hsize]{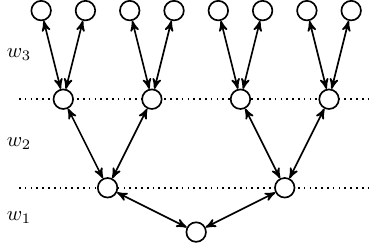}
    \caption{\scriptsize A $2$-regular tree of height $h=3$ with hierarchical edge weights.
 Each edge in generation $i$ has weight $w_i$ and these weights satisfy $w_1 \leq w_2 \leq w_3$.}
    \label{fig:regular_hierarchical_tree}
\end{subfigure}
\hfill
    \begin{subfigure}{0.3\textwidth}
    \includegraphics[width=\hsize]{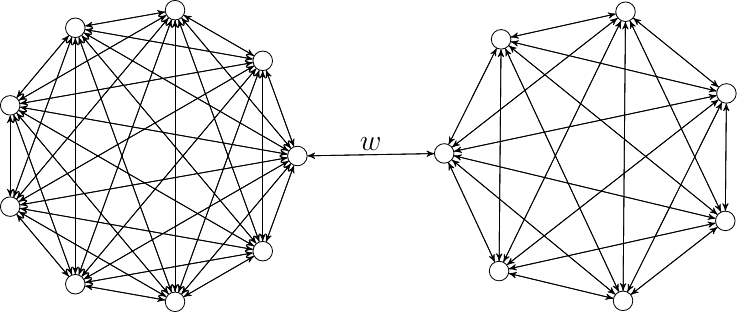}
    \caption{\scriptsize A bottleneck graph with bridge weight $w$ and two cliques of size $n = 9$ and $m = 7$.}
    \label{fig:bottleneck_graph}
\end{subfigure} \\
    \begin{subfigure}{0.3\textwidth}
        \includegraphics[width=\hsize]{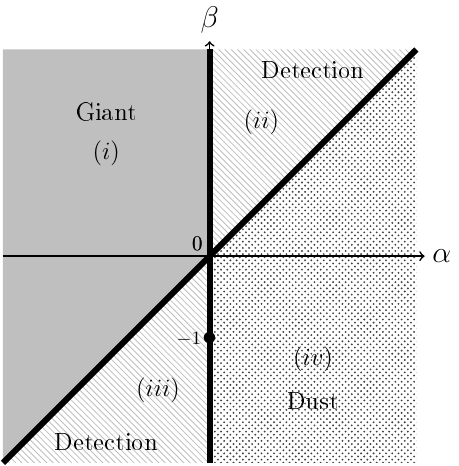}
        \caption[]{\scriptsize Phase diagram for the community star graph, with $q=n^{\alpha}$ and $w=n^{\beta}$. 
  For each of the regions the following event occurs with high probability:
  $(i)$ One single tree; 
      $(ii)$  $k+1$ trees, all $k$ vertices incident to a weight $1$ edge are isolated, while the remaining vertices form a single tree; 
      $(iii)$ $n-k$ trees, all $n-k-1$ vertices incident to a weight $w$ edge are isolated, while the remaining vertices form a single tree; 
      $(iv)$ $n$ isolated vertices. 
   The exact limit values of the correlations along the bold lines, i.e. in the non-degenerate regimes, 
   can be found in in \cref{CommStar}. 
    }
        \label{fig:CommStardetect}
    \end{subfigure}
\hfill
    \begin{subfigure}{0.3\textwidth}
    \includegraphics[width=\hsize]{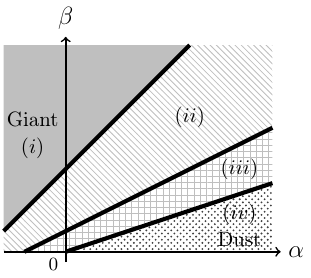}
    \caption{\scriptsize Phase diagram for the $d$-regular hierarchical tree of height $h=3$, with $d=n$, 
    $q=n^{\alpha}$ and $j$-th generation edge weights $w_j=n^{j\beta}$ for $\beta \geq 0$. 
    %The diagram shows how the LEP detects the hierarchical structure of the tree.
    For each of the regions the following event occurs with high probability:
      $(i)$ One single tree; 
      $(ii)$  All 2nd and 3rd generation edges are present, while all 1st generation edges are absent; 
      $(iii)$ All 3rd generation edges are present, while all 1st and 2nd generation edges are absent;
      $(iv)$ All $1+n+n^2+n^3$ vertices are isolated. 
    }
    \label{fig:hierarchical_tree_detectability}
\end{subfigure}
\hfill
    \begin{subfigure}{0.3\textwidth}
    \includegraphics[width=\hsize]{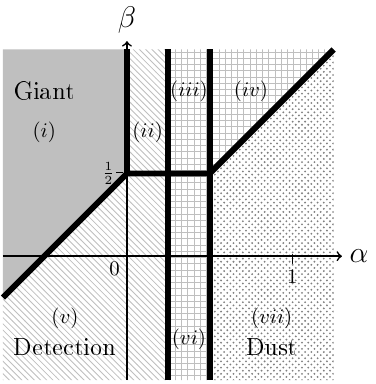}
  \caption{\scriptsize Phase diagram for the bottleneck graph, with $q=n^{\alpha}$, $w=n^{\beta}$ and $m=\sqrt{n}$. 
    Regions $(ii)$ and $(v)$ are the regimes where the LEP detects the community structure.
    For each of the regions the following event occurs with high probability:
      $(i)$ One single tree;
      $(ii)$ Two trees on $n+1$ and $\sqrt{n}$ vertices, with the large tree containing both bridge vertices;
      $(iii)$ One tree consists of the $n$ vertices in the largest clique with the bridge vertex from the small clique, while the other vertices in the small clique are isolated;
      $(iv)$ Both bridge vertices are connected, and all others are isolated; 
      %This regime corresponds with an anti-community model, where the weight of the bridge outgrows the community structure;
      $(v)$ Two trees with $n$ and $\sqrt{n}$ vertices, while the bridge edge is absent;
     $(vi)$ One tree with all $n$ vertices in the largest clique, while the $\sqrt{n}$ vertices in the small clique are isolated;
    $(vii)$ $n+\sqrt{n}$ isolated vertices.
  }
    \label{fig:tcg_detectability}
\end{subfigure} 
    \caption{An overview of the results in \cref{trees}. The \cref{fig:community_star,,fig:regular_hierarchical_tree,,fig:bottleneck_graph} depict the geometries treated in \cref{CommStar,,thm:correlation_asymptotics_regular_hierarchical,,prop:tcg_partition}, respectively, while 
    \cref{fig:CommStardetect,,fig:hierarchical_tree_detectability,,fig:tcg_detectability} give a graphical representation of their results.
    }
    \label{fig:overview_detectability}
\end{figure}

\begin{proposition}[\bf{Potential and its limit on a homogeneous star graph}]\label{Star}
Let $SG_n$ denote the star graph on $n$ vertices, i.e. 
$SG_n$ is an undirected tree consisting of a single center vertex $c$ that is adjacent to $n-1$ leaves.
Let $x,y$ be two distinct leaves and equip $SG_n$ with a uniform weight function that assigns weight $w$ to all edges.
Given $q > 0$, 
\begin{align}
  U_q(c,x) &= \frac{q(q + (n-1)w)}{(q+w)(q + nw)} \label{eq:star_correlation_center}\\
  \nonumber \\
  U_q(x,y) &= \frac{q(q^2 + (n+2)w q + 2(n-1)w^2)}{(q+w)^2(q + nw)}, \label{eq:star_correlation_leaves}
\end{align}
which implies that $q \mapsto U_q(c,x)$ and $q \mapsto U_q(x,y)$ are strictly concave. 

Let $q_n = \bar{q}n^{\alpha}$ and $w_n = \bar{w}n^{\beta}$ with $\alpha, \beta \in \R$ 
and $\bar{q},\bar{w} \in (0, \infty)$. Then
\begin{equation}\label{As1}
  \lim_{n\to\infty} U_{q_n}^{(SG_n)}(c,x) = 
  \left\{
  \begin{array}{ll}
    1 & \alpha > \beta \\
    \frac{\bar{q}}{\bar{q} + \bar{w}} & \alpha = \beta \\
    0 & \alpha < \beta
  \end{array}\right.
\end{equation}
and 
\begin{equation}\label{As2}
  \lim_{n\to\infty} U_{q_n}^{(SG_n)}(x,y) =
  \left\{
  \begin{array}{ll}
    1 & \alpha > \beta \\
    \frac{\bar{q}(\bar{q} + 2\bar{w}^2)}{(\bar{q} + \bar{w})^2} & \alpha = \beta \\
    0 & \alpha < \beta
  \end{array}\right.
\end{equation}
\end{proposition}

We see that the the critical phase for the appearance of a giant is when $\alpha=\beta$ for which the resulting connected subtree can be thought of as a star whose center has offspring distribution of parameter $\bar{q}/(\bar{q} + \bar{w})$, while a unique giant emerges as soon as $\alpha<\beta$. 

The following statement clarifies how $q$ should be scaled in a non-homogeneous star to detect an implanted sub-module of leaves more densely connected to the center. \Cref{fig:CommStardetect} offers a graphical representation of Theorem~\ref{CommStar}.

\begin{theorem}[\bf{Asymptotic detection in a star graph with two communities}]\label{CommStar}
Let $CSG_{n,k}$ denote the community star graph on $n$ vertices, which is a star graph on $n$ vertices equipped with an inhomogeneous weight function, that assigns weight $1$ to $k$ edges and weight $w$ to the remaining $n-k-1$ edges, as depicted in \cref{fig:community_star}.
Let $c$ denote the center vertex, $x,y$ vertices incident to an edge with weight $1$ and $w$, respectively.
For $\alpha,\beta \in \R$ take $q_n = n^{\alpha}$, $w_n = n^{\beta}$ and $k$ constant. Then
\begin{equation}
  \lim_{n \to \infty} U_{q_n}^{(CSG_{n,k})}(c,x) = 
  \left\{
  \begin{array}{ll}
    0 & \alpha < 0 \\
    & \\
    \left\{
    \begin{array}{ll}
      \frac{1}{2} & \beta > -1 \\
      & \\
      \frac{k+3}{2k + 8} & \beta = -1 \\
      & \\
      \frac{k+1}{2k+4} & \beta < -1
    \end{array}
    \right. & \alpha = 0 \\
    & \\
    1 & \alpha > 0
  \end{array}
  \right.
\end{equation}
and
\begin{equation}	
  \lim_{n \to \infty} U_{q_n}^{(CSG_{n,k})}(c,y) = 
  \left\{
  \begin{array}{ll}
    0 & \alpha < \beta \\
    \frac{1}{2} & \alpha = \beta \\
    1 & \alpha > \beta
  \end{array}
  \right.
\end{equation}
\end{theorem}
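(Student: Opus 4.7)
My plan is to combine the adjacency-reduction formula of \cref{prop:inex} with an explicit computation of the partition function of the community star, and then extract the limits by a short case analysis. Since the center $c$ is at distance $1$ from both $x$ and $y$, \eqref{eq:close_factor} immediately yields
\[
 U_q(c,x) = \frac{q\,Z_{CSG_{n-1,k-1}}(q)}{Z_{CSG_{n,k}}(q)},\qquad U_q(c,y) = \frac{q\,Z_{CSG_{n-1,k}}(q)}{Z_{CSG_{n,k}}(q)},
\]
because removing the edge $(c,x)$ leaves the isolated vertex $\{x\}$ with $Z_x(q)=q$ and a smaller community star $CSG_{n-1,k-1}$ sharing the same set of weight-$w$ leaves, and analogously for $y$.

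The central computation is therefore that of $Z_{CSG_{n,k}}(q)$. I would enumerate spanning rooted forests by the set $T$ of leaves whose incident edge is present, split by weight into $T_A\subseteq[k]$ (weight $1$) and $T_B\subseteq[m]$ (weight $w$), with $m:=n-1-k$. The tree containing $c$ has $\abs{T_A}+\abs{T_B}+1$ vertices, any of which can serve as its root, so it contributes $q(\abs{T_A}+\abs{T_B}+1)w^{\abs{T_B}}$, while the singleton leaves contribute $q^{k-\abs{T_A}+m-\abs{T_B}}$. Writing $\abs{T_A}+\abs{T_B}+1 = 1+\abs{T_A}+\abs{T_B}$ and performing the three resulting double binomial sums produces the factorisation
\[
 Z_{CSG_{n,k}}(q) = q(q+1)^{k-1}(q+w)^{m-1} P(q,k,m),\quad P(q,k,m) := q^2 + q\bigl(1+k+(m+1)w\bigr) + (k+m+1)w.
\]
Plugging this in and using the telescopic identities $P(q,k,m)-P(q,k-1,m)=q+w$ and $P(q,k,m)-P(q,k,m-1)=w(q+1)$ (both immediate from the definition of $P$) I then obtain
\[
 U_q(c,x) = \frac{q}{q+1}\left(1 - \frac{q+w}{P(q,k,m)}\right),\qquad U_q(c,y) = \frac{q}{q+w}\left(1 - \frac{w(q+1)}{P(q,k,m)}\right).
\]

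The asymptotics now follow by substituting $q_n=n^\alpha$, $w_n=n^\beta$, $m\sim n$. The prefactor $q/(q+1)$ drives $U_q(c,x)\to 0$ when $\alpha<0$ and $\to 1$ when $\alpha>0$ (in the latter case $P\geq q^2$ kills the bracket), and $q/(q+w)$ plays the same role for $U_q(c,y)$ in the regimes $\alpha\neq\beta$. The critical case $\alpha=\beta$ of $U_q(c,y)$ pins the prefactor at $\tfrac{1}{2}$, and one checks directly that $w(q+1)/P(q,k,m)\to 0$ in every subcase, yielding the limit $\tfrac{1}{2}$.

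The main obstacle, and the only place where the constant $k$ actually surfaces in the answer, is the critical case $\alpha=0$ of $U_q(c,x)$. Here $q_n\equiv 1$ and $P(1,k,m) = 2+k+(2m+k+2)w$, which splits according to $\beta$: for $\beta>-1$ the term $(2m+k+2)w\sim 2nw\to\infty$ dominates, the bracket tends to $1$, and the limit is $\tfrac{1}{2}$; for $\beta=-1$ the same term tends to $2$, so $P\to k+4$ and the bracket is $\tfrac{k+3}{k+4}$; for $\beta<-1$ both $w$ and $mw$ vanish, so $P\to k+2$ and the bracket is $\tfrac{k+1}{k+2}$. Multiplying by the $\tfrac{1}{2}$ prefactor recovers exactly the three values $\tfrac{1}{2}$, $\tfrac{k+3}{2k+8}$, and $\tfrac{k+1}{2k+4}$ stated in the theorem.
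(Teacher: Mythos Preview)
Your proof is correct and follows essentially the same route as the paper: compute the partition function of $CSG_{n,k}$, feed it into the distance-$1$ formula \eqref{eq:close_factor}, and extract the limits. The paper obtains $Z_{CSG_{n,k}}(q)$ by applying Schur's determinant identity to the Laplacian, whereas you enumerate rooted forests directly; both yield the same factorisation $q(q+1)^{k-1}(q+w)^{n-k-2}P$. Your telescopic rewriting $U_q(c,x)=\tfrac{q}{q+1}\bigl(1-(q+w)/P\bigr)$ is a bit more transparent for the limit analysis than the fully expanded rational functions the paper writes down, though the content is the same.

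One small point: in the case $\alpha>0$ the assertion ``$P\ge q^2$ kills the bracket'' is not quite enough on its own when $w_n\gg q_n^2$; you also need the lower bound $P\ge q(m+1)w$ (immediate from the definition of $P$) to control $(q+w)/P$ in that regime. With both bounds in hand the conclusion follows as you state.
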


The next two statements show similar detections on trees of different flavours.
\begin{proposition}[\bf{Asymptotic correlation in undirected trees with a bounded number of vertices}]\label{thm:correlation_asymptotics_bounded_vertices}
Let $G=(V,E)$ be an undirected tree and let $w_k:E\to(0,\infty)$ be a sequence of edge weight functions. Write $G_k=(V,E,w_k)$ to denote the weighted graph obtained by equiping $G$ with $w_k$. For each $k \in \N$ let $q_k>0$ be an intensity parameter and assume that for each edge 
$e\in E$ the limit $\lim_{k \to \infty}\tfrac{w_k(e)}{q_k}$ exists in $[0, \infty]$.
Let $x,y \in V$ be two adjacent vertices.
Then, as $k \to \infty$ it holds that
\begin{equation}
 \correlation[G_k]{q_k}{x}{y} \to 
 \begin{cases}
  0 & \text{ if }q_k = \smallO(w_k(x,y)) \\
  1 & \text{ if }q_k = \smallOmega(w_k(x,y)).
 \end{cases}
\end{equation}
\end{proposition}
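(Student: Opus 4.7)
The plan is to combine formula~\eqref{eq:close_factor} from Proposition~\ref{prop:inex} with an explicit expression for $Z_G(q)$ obtained as a rank-one perturbation of the block-diagonal case. Since $x$ and $y$ are adjacent, let $G_x$ and $G_y$ denote the two connected components of $G$ after removing the edge $e=(x,y)$; then~\eqref{eq:close_factor} reads $U_{q}(x,y) = Z_{G_x}(q) Z_{G_y}(q)/Z_G(q)$, so everything reduces to understanding the ratio $Z_G(q)/(Z_{G_x}(q) Z_{G_y}(q))$.

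To compute this ratio, I would write $L_G = L_{G_x\cup G_y} - w_n(x,y)(e_x - e_y)(e_x - e_y)^T$ and apply the matrix-determinant lemma to $\det(qI-L_G)$, exploiting that $(qI-L_{G_x\cup G_y})^{-1}$ is block diagonal (so its $(x,y)$ off-diagonal entry vanishes). This gives
\begin{equation*}
Z_G(q) = Z_{G_x}(q)\, Z_{G_y}(q)\left(1 + \tfrac{w_n(x,y)}{q}\bigl(\rho_x^{(G_x)}(q) + \rho_y^{(G_y)}(q)\bigr)\right),
\end{equation*}
where $\rho_v^{(H)}(q) := q[(qI - L_H)^{-1}]_{vv}$ is the probability that $v$ is a root of $\Phi_q$ on $H$, cf.~\eqref{DetR}. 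Therefore
\begin{equation*}
U_{q}(x,y) = \frac{1}{1 + \tfrac{w_n(x,y)}{q}\bigl(\rho_x^{(G_x)}(q) + \rho_y^{(G_y)}(q)\bigr)}.
\end{equation*}

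From this explicit formula the two limits follow once $\rho_x^{(G_x)}(q) + \rho_y^{(G_y)}(q)$ is shown to stay bounded in a compact subset of $(0, 2]$ uniformly in $n$. The upper bound $\rho_v^{(H)}(q) \leq 1$ is immediate. For the matching lower bound I would invoke \cite[Prop.~2.3]{avena2018applications}: conditionally on the partition induced by $\Phi_q^{(H)}$, the root of each block is distributed as the stationary measure of the random walk restricted to that block. Since the underlying chain is the continuous-time walk on an undirected weighted graph, whose generator $L=A-D$ satisfies $\mathbf{1}^T L = 0$, this stationary measure is uniform. Consequently $\rho_v^{(H)}(q) = \E[1/|B_v|] \geq 1/|V(H)| \geq 1/|V|$, a bound that does not depend on $q$ nor on the edge weights.

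With $\rho_x^{(G_x)}(q) + \rho_y^{(G_y)}(q) \in [2/|V|, 2]$ uniformly in $n$, the two regimes follow: if $q_n = \smallOmega(w_n(x,y))$ then $w_n(x,y)/q_n \to 0$ and $U_{q_n}(x,y) \to 1$; if $q_n = \smallO(w_n(x,y))$ then $w_n(x,y)/q_n \to \infty$ and, since its multiplier is bounded away from zero, $U_{q_n}(x,y) \to 0$. The delicate point is precisely this uniform lower bound on the $\rho$'s: without it, a vanishing root probability could in principle cancel the divergence of $w_n(x,y)/q_n$. The continuous-time nature of the walk, which makes the stationary measure uniform on every induced subgraph, is what renders the bound scale-free in the weights and makes the argument go through.
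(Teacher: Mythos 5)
Your proof is correct, and it takes a genuinely different route from the paper's. The paper treats the two regimes separately: for $q_n=\smallOmega(w_n(x,y))$ it invokes the Wilson-algorithm bound $\P(e\in\Phi_q)\le \tfrac{w(e)}{q+w(e)}$ (\cref{lem:bound_edge_probability}), and for $q_n=\smallO(w_n(x,y))$ it runs an induction over the $x$-side of the edge via hitting-time recursions (where the hypothesis that $\lim_n w_n(e)/q_n$ exists for every edge is used to split the neighbours of $x$ into $N^{\le}_x$ and $N^{>}_x$), concluding through \cref{lem:hitting_time_expression}. You instead combine \eqref{eq:close_factor} with the matrix determinant lemma applied to the rank-one update $L_G=L_{G_x\cup G_y}-w_n(x,y)(e_x-e_y)(e_x-e_y)^T$, obtaining the exact identity $U_q(x,y)=\bigl(1+\tfrac{w_n(x,y)}{q}(\rho_x^{(G_x)}+\rho_y^{(G_y)})\bigr)^{-1}$ (I checked it against the star-graph and two-vertex formulas; it is right, and the off-diagonal resolvent entry indeed vanishes by block-diagonality). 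The remaining input, $\rho_v^{(H)}\in[1/\abs{V},1]$ uniformly in $n$, is legitimate: for an undirected graph the continuous-time generator satisfies $\mathbf{1}^TL=0$, so the conditional root law of \cite[Prop.~2.3]{avena2018applications} is uniform on each block and $\rho_v^{(H)}=\E[1/\abs{B_v}]\ge 1/\abs{V(H)}$; alternatively one can avoid that citation by reversibility, since $\P_v(X_t=v)\ge 1/\abs{V(H)}$ for all $t$ and hence $\rho_v^{(H)}=\P_v(X_{\tau_q}=v)\ge 1/\abs{V(H)}$. What your approach buys is an exact closed formula with quantitative rates, no induction, and no use of the hypothesis that the ratios $w_n(e)/q_n$ converge for edges other than $(x,y)$ (your statement is thus slightly stronger); what the paper's approach buys is a hands-on hitting-time machinery that it then reuses for the hierarchical-tree results (\cref{lem:parent_hitting_asymptotics}, \cref{thm:correlation_asymptotics_regular_hierarchical}), where the exact rank-one formula would be less convenient.
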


The following theorem holds for a specific class of undirected weighted trees that will be called `hierarchical trees'.
In these trees one vertex is specified as \emph{ancestor} vertex.
The \emph{height} or \emph{generation} of a vertex or edge is its distance to the ancestor. 
A \emph{hierarchical tree} is a tree with edge weights $w:E \to [0,\infty)$ satisfying the following two properties:
\begin{enumerate}[$(i)$]
 \item if $e,e' \in E$ are edges in the same generation of the regular tree, then $w(e)=w(e')$; 
 \item if $e_i,e_j \in E$ are edges in generations $i$ and $j$ with $i<j$, respectively, then $w(e_i)\leq w(e_j)$. 
\end{enumerate}
So, edges further from the ancestor of the hierarchical tree have more weight.

The \emph{height of the tree} is the maximal height of its vertices. 
If $x$ is a vertex at height $h$ and $y$ is a neighbor of $x$ at height $k-1$, 
then we call $x$ a \emph{child} of $y$ and $y$ the \emph{parent} of $x$.
If each vertex with height less than the height of the tree has $d$-children, then we call the tree $d$-\emph{regular}.
The \emph{ancestry} of a vertex is the unique path from the vertex to the ancestor (including the vertex itself).
A depiction of a regular hierarchical tree is given in \cref{fig:regular_hierarchical_tree}.

\begin{theorem}[\bf{Asymptotic detection of layers in a regular hierarchical weighted tree}] \label{thm:correlation_asymptotics_regular_hierarchical}
For each $n \in \N$ let $G_n=(V_n,E_n,w_n)$ be a undirected $d_n$-regular tree with hierarchical edge weights.
For each $n \in \N$ let $x_n,y_n \in V_n$ be vertices such that $x_n$ is the parent of $y_n$ 
and such that the minimal distance between $y_n$ and a leaf of $G_n$ is constant in $n$.
Denote this constant distance by $k$. Let $e_n$ denote the edge between $x_n$ and $y_n$.
For each $n \in \N$ let $q_n>0$ be the intensity parameter.
Then as $n \to \infty$ it holds for the 2-point correlation between $x_n$ and $y_n$ that
\begin{equation}
 \correlation[G_n]{q_n}{x_n}{y_n} \to 
 \begin{cases}
  0 & \text{ if } q_n = \smallO\left(d_n^{-k}w_{n}(e_n)\right) \\
   1 & \text{ if } q_n = \smallOmega\left(d_n^{-k}w_{n}(e_n)\right). 
 \end{cases}
\end{equation}
\end{theorem}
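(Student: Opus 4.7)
The plan is to use the close-form formula of Proposition~\ref{prop:inex} for adjacent vertices, then control the resulting ratio via two ``root-probabilities'' which I can estimate by a first-step recursion exploiting the hierarchical $d_n$-regular structure.

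Since $x_n$ and $y_n$ are adjacent, Proposition~\ref{prop:inex} gives $U_{q_n}^{(G_n)}(x_n,y_n)=Z_{T_x}(q_n)Z_{T_y}(q_n)/Z_{G_n}(q_n)$, where $T_x$ and $T_y$ are the two components of $G_n\setminus\{e_n\}$ containing $x_n$ and $y_n$ respectively. I would then split the forest sum in $Z_{G_n}$ according to whether $e_n\in F$. Since $G_n$ is a tree, a forest containing $e_n$ decomposes (depending on the orientation of $e_n$ in $F$) as a pair consisting of a rooted forest of one side with the corresponding endpoint as a root and an unconstrained rooted forest of the other side. A short bookkeeping of the powers of $q$ yields
\begin{equation*}
Z_{G_n}(q_n)=Z_{T_x}(q_n)Z_{T_y}(q_n)+\tfrac{w_n(e_n)}{q_n}\bigl[Z^{*x_n}_{T_x}(q_n)Z_{T_y}(q_n)+Z_{T_x}(q_n)Z^{*y_n}_{T_y}(q_n)\bigr],
\end{equation*}
where $Z^{*v}_T(q):=\sum_{F\in\mathcal F(T):\ v \text{ is a root of } F}q^{r(F)}w(F)$. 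Setting $\rho_T(v,q):=Z^{*v}_T(q)/Z_T(q)=q\,G^{(T)}_q(v,v)$ (which is \eqref{DetR} applied to $A=\{v\}$), this simplifies to
\begin{equation*}
U_{q_n}^{(G_n)}(x_n,y_n)=\Bigl(1+\tfrac{w_n(e_n)}{q_n}\bigl[\rho_{T_x}(x_n,q_n)+\rho_{T_y}(y_n,q_n)\bigr]\Bigr)^{-1},
\end{equation*}
so it suffices to show that the bracketed quantity rescaled by $w_n(e_n)/q_n$ tends to $0$ or $\infty$ in the respective regimes.

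I now focus on $\rho_{T_y}(y_n,q_n)$. By the $d_n$-regularity of $T_y$ together with the hierarchical weights, each depth-$j$ vertex of $T_y$ (counted from $y_n$ at depth $0$) is equally likely to be a root of $\Phi_{q_n}$ restricted to its own depth-$(k-j)$ subtree; call this common probability $S_j$. A first-step analysis, or equivalently the effective-resistance identity $\rho_T(v,q)=q\,R_{\rm eff}^{(T_\dagger)}(v,\dagger)$, produces the recursion
\begin{equation*}
S_j=\frac{\beta_{j+1}+S_{j+1}}{\beta_{j+1}+S_{j+1}+d_n},\qquad S_k=1,
\end{equation*}
where $\beta_j:=q_n/W_j$ and $W_j$ is the common weight of depth-$j$ edges in $T_y$. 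The hierarchy is used precisely here: $W_j\geq W_1\geq w_n(e_n)$, so $\beta_j\leq q_n/w_n(e_n)$ throughout. Iterating the bound $S_j\leq(\beta_{j+1}+S_{j+1})/d_n$ yields
\begin{equation*}
\rho_{T_y}(y_n,q_n)=S_0\leq d_n^{-k}+\tfrac{q_n}{w_n(e_n)(d_n-1)},
\end{equation*}
hence $\tfrac{w_n(e_n)}{q_n}\rho_{T_y}(y_n,q_n)\leq\tfrac{w_n(e_n)}{q_n d_n^k}+\tfrac{1}{d_n-1}\to0$ whenever $q_n=\omega(d_n^{-k}w_n(e_n))$ and $d_n\to\infty$ (if instead $d_n$ is bounded, then $q_n=\omega(w_n(e_n))$, so the prefactor $w_n(e_n)/q_n$ vanishes directly). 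Conversely, the lower estimate $S_j\geq S_{j+1}/(\beta_{j+1}+S_{j+1}+d_n)\geq S_{j+1}/(3d_n)$, valid as soon as $\beta_{j+1}+S_{j+1}\leq 2d_n$ (automatic in the regime $q_n\leq w_n(e_n)$), gives $S_0\geq(3d_n)^{-k}$ and thus $\tfrac{w_n(e_n)}{q_n}\rho_{T_y}(y_n,q_n)\geq 3^{-k}\tfrac{w_n(e_n)}{q_n d_n^k}\to\infty$ when $q_n=o(d_n^{-k}w_n(e_n))$.

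The contribution of $\rho_{T_x}(x_n,q_n)$ in the first regime is controlled similarly: by Rayleigh monotonicity applied to the extended network $T_{x,\dagger}$, removing the parent-branch of $x_n$ (if any) only increases $\rho$, and what remains at $x_n$ is a near-regular hierarchical tree of depth $k+1$ from $x_n$ with top-edge weight exactly $w_n(e_n)$. The analogous recursion in this truncated subtree gives $\rho_{T_x}(x_n,q_n)\leq d_n^{-(k+1)}+O\bigl(q_n/(w_n(e_n)\,d_n)\bigr)$, so $\tfrac{w_n(e_n)}{q_n}\rho_{T_x}(x_n,q_n)\to 0$ as well; in the second regime the trivial bound $\rho_{T_x}\geq 0$ suffices. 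The main technical obstacle is the clean two-sided estimate on $S_0$ via the recursion, in particular matching constants through the $k$ iterations while handling the hierarchical (non-homogeneous) weights; once that is established, the conclusion follows by substitution into the identity for $U_{q_n}^{(G_n)}(x_n,y_n)$.
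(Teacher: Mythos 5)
Your proof is correct, and it takes a genuinely different route from the paper's. The paper reduces $U_q(x_n,y_n)$ to hitting probabilities of the killed random walk via \cref{lem:hitting_time_expression}, then proves a dedicated asymptotic lemma (\cref{lem:parent_hitting_asymptotics}, with the technical \cref{lem:sum_convergence}) giving $\tilde{\P}_y(\tau_x < \tilde{\tau}_q)\sim 1-q_n\ell_n(y)/w_h^{(n)}$ under $q_n=\smallO(w_h^{(n)}/\ell_n(y))$, and then treats the complementary regime directly. You instead express the correlation as $U_q=\bigl(1+\tfrac{w(e)}{q}[\rho_{T_x}+\rho_{T_y}]\bigr)^{-1}$ by combining \cref{prop:inex} (or \cref{eq:close_factor}) with the single-edge extension identity (this is exactly \cref{lem:graph_extension_single_edge}), and you estimate the rooting probability $S_0=\rho_{T_y}(y_n,q_n)$ through the nonlinear recursion $S_j=(\beta_{j+1}+S_{j+1})/(\beta_{j+1}+S_{j+1}+d_n)$. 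That recursion is correct: it is algebraically equivalent to the paper's \cref{eq:parent_hitting_recursive} via $1-P_{j+1}=q/(q+W_{j+1}S_{j+1})$, so the underlying first-step analysis is the same, but your two-sided iterated bounds ($S_0\le d_n^{-k}+q_n/(w_n(e_n)(d_n-1))$ above and $S_0\ge(3d_n)^{-k}$ below once $q_n\le w_n(e_n)$) are cruder and avoid the paper's exact asymptotic-equivalence machinery, which is all that is needed here. Your treatment of the parent side $\rho_{T_x}$ via the electrical interpretation $\rho_T(v,q)=qR_{\mathrm{eff}}^{(T_\dagger)}(v,\dagger)$ and Rayleigh monotonicity is a nice alternative to the paper's explicit computation at $x_n$ using the parent/child decomposition (the paper's proof handles it via \cref{eq:child_hitting} and the already-established vanishing of $\tilde{\P}_y(\tau_x<\tilde\tau_q)$). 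One small point worth making explicit is that in $T_x$ the vertex $x_n$ has only $d_n-1$ remaining children (since $y_n$ is removed), which is why you get the slightly better exponent $k+1$ before division by $d_n-1$; as you note the $d_n^{-(k+1)}$ decay is more than enough, and when $d_n$ is bounded the trivial bound $\rho\le 1$ together with $w_n(e_n)/q_n\to 0$ closes that case, mirroring the paper's reduction to \cref{thm:correlation_asymptotics_bounded_vertices}.
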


We conclude by showing with an illustrative example how the analysis on trees presented here and those on complete graphs pursued  in~\cite{avena2019meanfield} can be combined to obtain results on mixed geometrical setups. 
The resulting regimes are summarized in the phase diagram in \Cref{fig:tcg_detectability}.

\begin{theorem}[{\bf Detection of cliques in a bottleneck graph}]\label{prop:tcg_partition}
 Let $BG_{n,m}$ be a bottleneck (two-cluster) graph. That is, an undirected graph consisting of two disjoint cliques $C_1, C_2$ on $n$ and $m$ vertices, respectively, that are connected via a single bridge edge, as depicted in \cref{fig:bottleneck_graph}.
 Equip $BG_{n,m}$ with a weight function that assigns weight $w$ to the bridge and weight $1$ to all other edges.
 Then its partition function is given by
 \begin{equation}
  Z(q)=q\paren*{q(q+n)(q+m) + w(q+1)(2q+n+m)}(q+n)^{n-2}(q+m)^{m-2}. \label{eq:tcg_partition} 
 \end{equation}

Further, set $q=q_n > 0$ and let $w=w_n$ and $m=m_n$ depend on $n$ where $n \geq m$. 
Denote by $b,b'$ the two vertices incident to the bridge, by $x,x'$ two vertices that both belong to the clique $C_i$ containing $b$, 
and by $y$ a vertex in the clique containing $b'$.
Then as $n \to \infty$ it holds for the 2-point correlation between these vertices that
 \begin{align}
  U_q(x,x') &\to 
  \begin{cases}
   0 & \text{ if }q=\smallO(\sqrt{\abs{C_i}}) \\
   1 & \text{ if }q=\smallOmega(\sqrt{\abs{C_i}})
  \end{cases} \label{eq:tcg_cluster_correlation} \\ 
   U_q(b,b') &\to 
  \begin{cases}
   0 & \text{ if }q=\smallO(\tfrac{w}{m}) \text{ or }(q=\smallO(w), \ w=\smallOmega(m))\\
   1 & \text{ if }q=\smallOmega(w)\text{ or }(q=\smallOmega(\tfrac{w}{m}), \ w=\smallO(m))
  \end{cases} \label{eq:tcg_bridge_correlation} \\ 
  U_q(b,x)&\to 
  \begin{cases}
   0 & \text{ if }q=\smallO(1) \text{ or }(q=\smallO(\sqrt{\abs{C_i}}), \ w=\smallO(m)) \text{ or }(q=\smallO(\sqrt{\abs{C_i}}), \ m=\smallO(n)) \\
   \tfrac{c}{1+c} & \text{ if }q=\smallOmega(1), \ q=\smallO(\sqrt{\abs{C_i}}), \ w=\smallOmega(m), \ \abs{C_i}=n, \ m\sim cn \text{ with }c\in (0,1] \\
   \tfrac{1}{1+c} & \text{ if }q=\smallOmega(1), \ q=\smallO(\sqrt{\abs{C_i}}), \ w=\smallOmega(m), \ \abs{C_i}=m, 
   \ m\sim cn \text{ with }c\in (0,1] \\
   1 & \text{ if }q=\smallOmega(\sqrt{\abs{C_i}})
  \end{cases} \label{eq:tcg_bridge_cluster_correlation} \\
  U_q(x,y)&\to 
  \begin{cases}
   0 & \text{ if }q=\smallO(1), \ q= \smallO(\tfrac{w}{m}) \\
   1 & \text{ if }q=\smallOmega(1) \text{ or }(q=\smallO(1), \ q = \smallOmega(\tfrac{w}{m})).
  \end{cases} \label{eq:tcg_different_cluster_correlation} 
 \end{align}
\end{theorem}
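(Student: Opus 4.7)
The partition function formula \eqref{eq:tcg_partition} I will compute via the matrix-tree theorem $Z(q)=\det(qI-L)$ combined with a Schur complement. Writing $qI-L$ in $2\times 2$ block form (diagonal blocks indexed by the two cliques, with the bridge weight $w$ added to the diagonal entry at each bridge vertex, and a single $-w$ coupling $b$ to $b'$ in the off-diagonal blocks), the determinant factorizes as $\det(A)\det(B-CA^{-1}D)$. Each diagonal block has the form $(q+k)I_k - J_k + w E_{bb}$: a scalar multiple of the identity, minus the all-ones matrix, plus a rank-one perturbation supported at the bridge vertex. Two applications of the matrix-determinant lemma deliver closed forms for $\det(A)$ and for the $(b,b)$-entry of $A^{-1}$; after substitution in the Schur complement a common factor $q(q+n)+w(q+1)$ cancels, giving \eqref{eq:tcg_partition}. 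The specialization $w=0$, returning $Z_{K_n}(q)Z_{K_m}(q)=q^2(q+n)^{n-1}(q+m)^{m-1}$, is a useful sanity check.

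For $U_q(b,b')$ in \eqref{eq:tcg_bridge_correlation}, the key observation is that the bridge is a cut edge: deleting it disconnects $BG_{n,m}$ into $K_n\sqcup K_m$. The same argument that proves Proposition~\ref{prop:inex} at distance one therefore applies verbatim, yielding
\[
U_q(b,b')=\frac{Z_{K_n}(q)Z_{K_m}(q)}{Z_{BG_{n,m}}(q)}=\frac{q(q+n)(q+m)}{q(q+n)(q+m)+w(q+1)(2q+n+m)}.
\]
Writing $U_q(b,b')=1/(1+R)$ with $R=w(q+1)(2q+n+m)/[q(q+n)(q+m)]$, a straightforward case analysis on which of $q,n,m$ dominates each factor produces the four regimes listed in~\eqref{eq:tcg_bridge_correlation}.

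For the remaining correlations I will condition on whether the bridge edge $e$ belongs to $\Phi_q$. The event $\{e\notin\Phi_q\}$ has probability exactly $U_q(b,b')$, and conditional on it the restrictions of $\Phi_q$ to $C_1$ and $C_2$ are \emph{independent} random spanning rooted forests with the same parameter $q$, as is immediate from the product factorization of the conditional measure. In particular, for $x,x'\in C_i$, this yields $\mathbb{P}(B_q(x)\neq B_q(x') \mid e\notin\Phi_q)=U_q^{(K_{|C_i|})}(x,x')$, whose transition at $q\sim\sqrt{|C_i|}$ is supplied by the mean-field results of~\cite{avena2019meanfield}. Conditional on $\{e\in\Phi_q\}$, contracting the bridge realises $\Phi_q$ as the rooted-forest measure on the graph $G'$ obtained by merging $b$ and $b'$ into a single vertex $v$; the attached clique at $v$ is only a rank-one perturbation of the clique Laplacian of $K_{|C_i|}$, and a comparison argument shows that the corresponding conditional correlation has the same leading asymptotics. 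Summing the two conditional terms and substituting the asymptotics of $U_q(b,b')$ yields~\eqref{eq:tcg_cluster_correlation}. The mixed correlations~\eqref{eq:tcg_bridge_cluster_correlation} and~\eqref{eq:tcg_different_cluster_correlation} are handled analogously, combining the conditioning above with a further Wilson-algorithm decomposition on whether the loop-erased walk from $x$ (resp.\ $y$) reaches $b$ or traverses the bridge before being killed; the intermediate limit $c/(1+c)$ in~\eqref{eq:tcg_bridge_cluster_correlation} is precisely the regime in which $|C_i|$ and $m$ are of the same order, so both cliques contribute on the same scale and the ratio $c$ of their sizes enters the answer.

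The main obstacle is the comparison step in the conditional-on-$\{e\in\Phi_q\}$ analysis: uniformly across the scaling regimes of $(q,w,m,n)$, controlling the error introduced by replacing $G'$ with the pure clique $K_{|C_i|}$. This is most delicate near the mean-field threshold $q\sim\sqrt{|C_i|}$ and when $w$ is comparable to $m$, because the loop-erased walk launched inside $C_i$ may then traverse the bridge with non-negligible probability and a naive domination fails. Quantifying these crossing events via Green-function estimates on the complete graph, and confirming that they contribute only lower-order corrections to the mean-field transition, is the technical heart of the argument; once it is in place the four regimes in \eqref{eq:tcg_cluster_correlation}--\eqref{eq:tcg_different_cluster_correlation} follow by substitution and elementary asymptotic bookkeeping.
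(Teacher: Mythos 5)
Your Schur-complement derivation of \eqref{eq:tcg_partition} is correct (I checked it reproduces the stated polynomial; the paper instead assembles $Z_{BG_{n,m}}$ from \cref{lem:graph_extension_single_edge} together with $Z_{K_n}(q)=q(q+n)^{n-1}$ and the clique rooting formula, but both routes are sound), and your cut-edge identity $U_q(b,b')=Z_{K_n}(q)Z_{K_m}(q)/Z_{BG_{n,m}}(q)$ is exactly how the paper treats \eqref{eq:tcg_bridge_correlation}.

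For \eqref{eq:tcg_cluster_correlation}, \eqref{eq:tcg_bridge_cluster_correlation} and \eqref{eq:tcg_different_cluster_correlation}, however, there is a genuine gap, and it is the one you flag yourself. Conditioning on $\{e\notin\Phi_q\}$ is fine (spatial Markov property gives independent clique forests), but on $\{e\in\Phi_q\}$ you merely assert that a ``comparison argument'' shows the contracted graph has the same leading asymptotics as the pure clique $K_{\abs{C_i}}$. In the regimes where this conditional term actually matters --- precisely when the bridge is present with non-vanishing probability, e.g.\ $w=\smallOmega(m)$ with $q$ near the threshold $\sqrt{\abs{C_i}}$ --- that comparison \emph{is} the content of the theorem, so what you have is a reduction to an unproved estimate rather than a proof; note also that $\{e\in\Phi_q\}$ splits into two orientations whose directed contractions differ (out-edges of $b$ versus of $b'$ are removed), which your sketch glosses over. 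The paper sidesteps any uniform comparison: via \cref{lem:graph_extension_single_edge} it writes $\NNM^{(BG_{n,m})}(x\leftrightarrow_{\Phi_q} x')$ \emph{exactly} as an explicit combination of $\P^{(K_n)}(x\leftrightarrow_{\Phi_q} x')$ and $\P^{(K_n)}(x\leftrightarrow_{\Phi_q} x'\mid b\in R_q)$ weighted by rational functions of $q,w,n,m$ and $U_q(b,b')$, and then reduces the conditional probability exactly --- using the spatial Markov property and the identification of the walk on $K_n$ with $b$'s out-edges removed with a walk on $K_{n-1}$ killed at rate $q+1$ --- to $\tfrac{1}{(q+1)^2}+\tfrac{q(q+2)}{(q+1)^2}\P^{(K_{n-1})}(x\leftrightarrow_{\Phi_{q+1}}x')$ (and to $\tfrac{1}{q+1}$ for $U_q(b,x)$, with an analogous exact formula for $U_q(x,y)$ via \cref{lem:edge_probability}). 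After that the only asymptotic input is the mean-field threshold of \cite{avena2019meanfield}, applied to $K_{n-1}$ at parameter $q+1$, so no error control near the threshold is needed. To complete your route you would need either such exact identities for the contracted graphs or genuine quantitative Green-function estimates; as written, the ``technical heart'' you name is missing.
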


%%%%%%%%%% PROOFS OF PRELIMINARY TECHINCAL LEMMAS
\section{Proofs of results on general graphs}\label{proofsGeneral}
\subsection{Monotone events in terms of number of roots}
\begin{proof}[Proof of Theorem~\ref{thm:derivative_probabilities}]
Let $L$ be the graph Laplacian of $G$.
 Write $n = \abs{V}$ and let $\lambda_1,\ldots,\lambda_n$ denote the eigenvalues $-L$.
 %Let $Z(q)$ denote the partition function of $G$.
 By \cite[proposition 2.1]{avena2018applications} it holds that
  \begin{equation}
   \E[r_q] = \sum_{i=1}^{n}\frac{q}{q+\lambda_i} = \frac{q Z'(q)}{Z(q)},
  \end{equation}
so that the derivative of the partition function is given by
\begin{equation}
 Z'(q)=\tfrac{1}{q}\E[r_q]Z(q).
\end{equation}
Note that the conditional probability $\P(\Phi_q \in \mathcal{H} \mid r_q = k)$ does not depend on $q$.
Also, the probability $\P(r_q=k)$ can be written as $\tfrac{c_k q^k}{Z(q)}$, where $c_k$ is some constant independent of $q$, corresponding to the coefficent of degree $k$ of the characteristic polynomial in~\eqref{Z}.
Hence, we have that
\begin{align*}
 \frac{d}{dq}\P(\Phi_q \in \mathcal{H}) &= \frac{d}{dq} \sum_{k=1}^{n} \P(\Phi_q \in \mathcal{H} \mid r_q=k)\P(r_q=k) \\
 &= \sum_{k=1}^{n} \P(\Phi_q \in \mathcal{H} \mid r_q=k) c_k \frac{d}{dq} \frac{q^k}{Z(q)} = \sum_{k=1}^{n} \P(\Phi_q \in \mathcal{H} \mid r_q=k) c_k \frac{kZ(q)q^{k-1}-q^kZ'(q)}{Z(q)^2} \\
 &= \tfrac{1}{q}\sum_{k=1}^{n} \P(\Phi_q \in \mathcal{H} \mid r_q=k)c_k \frac{kq^{k}-q^{k}\E[r_q]}{Z(q)} %\\&
= \tfrac{1}{q}\sum_{k=1}^{n} \P(\Phi_q \in \mathcal{H} \mid r_q=k)\P(r_q=k) \paren*{k-\E[r_q]} \\
 &= \tfrac{1}{q}\P(\Phi_q \in \mathcal{H})\sum_{k=1}^{n} \P(r_q=k \mid \Phi_q \in \mathcal{H}) \paren*{k-\E[r_q]} = \tfrac{1}{q}\P(\Phi_q \in \mathcal{H})\paren*{\E[r_q \mid \Phi_q \in \mathcal{H}] - \E[r_q]},
\end{align*}
where in the last step we use that $\sum_{k=1}^{n} \P(r_q=k \mid \Phi_q \in \mathcal{H})=1$.
\end{proof}

\subsection{Some reduction/extension lemmas}
We introduce here some rather classical contraction tools. Though, we stress that the following definition of contraction is slightly different from what is often encountered in the UST literature, as it is adapted to the setting of weighted directed graphs.
\begin{definition}[Directed edge contraction]
 Let $G=(V,E,w)$ be a weighted directed graph and $e \in E$ a directed edge from vertex $x$ to $y$, i.e. $e=(x,y)$.
 The graph $G\vec{/}e$ obtained by performing \emph{the directed edge contraction} in $G$ over edge $e$ is the graph 
 obtained by first removing all outgoing edges of $x$ and then contracting $x$ and $y$ into a single vertex, 
 while retaining all outgoing edges from $y$ and all ingoing edges to both $x$ and $y$.
 
 If $B$ is a set of edges that constitutes a rooted forest of $G$, then the operations of performing a directed edge contraction on different edges in $B$ commute. Thus for such a $B$ we can define the graph $G\vec{/}B$ to be the graph obtained by performing directed edge contractions on all edges in $B$.
\end{definition}
Besides this notation for directed edge contractions, we will also use the standard notation $G-e$ to denote the graph obtained by removing the directed edge $e$ (without removing the reversed edge), and $G/e$ to denote a regular edge contraction over edge $e$, i.e. $G/e$ is the graph obtained by identifying the two endpoints of $e$ as a single vertex.

\begin{lemma}[\bf{Various expressions for edge probabilities}] \label{lem:edge_probability}
Let $G=(V,E,w)$ be a weighted directed graph and $e=(x,y)$ a directed edge from vertex $x$ to $y$. Let $R_q$ be the set of root vertices of $\Phi_q$.
Let $L$ denote the graph Laplacian of $G$ and $K_q$ the RW Green's kernel given by 
$K_q=q(qI-L)^{-1}$.
For each directed edge $e$ write $G\vec{/}e$ to denote the directed $e$-contraction of $G$. Then it holds that
\begin{equation*}
 \P(e \in \Phi_q) = \tfrac{w(e)}{q}\P(x \in R_q, \ x \nleftrightarrow_{\Phi_q} y) = 
 \tfrac{w(e)}{q}(K_q(x,x)-K_q(y,x))= w(e)\frac{Z_{G\vec{/}e}(q)}{Z_{G}(q)}. 
\end{equation*}
\end{lemma}
\begin{proof}
 Let $e=(x,y)$ be an edge from $x$ to $y$.
 Let $\mathcal{A}=\cbrac{F \in \mathcal{F}_G \colon e \in F}$ denote the set of rooted forests of $G$ that do contain edge $e$.
 Write $\mathcal{H}=\cbrac{F \in \mathcal{F}_G \colon x \in R(F), \ x \nleftrightarrow_{F} y}$ to denote the set of forests in which $x$ is a root that is not connected to $y$. Note that there is a one-to-one correspondence $f:\mathcal{A}\to\mathcal{H}$ given by $f(F)=F-e$.
Moreover, it holds that $w(F)=w(e)w(f(F))$ and that $r(F)=r(f(F))-1$, where $r(F)$ denotes the number of roots of $F$.
The first identity follows by summation over all forests in $\mathcal{A}$.
For the second identity we use the Chebotarev-Shamis matrix-forest theorem \cite{chebotarev1997mft}, which states that $K_q(y,x) = \P(x \in R_q, \ x \leftrightarrow_{\Phi_q} y)$.
The third identity follows by considering the bijection $g:\mathcal{H} \to \mathcal{F}_{G\vec{/}e}$ that sends all edges of a forest in $\mathcal{H}$ to their corresponding edges in $G\vec{/}e$. Note that here $G\vec{/}e$ could be a multigraph. This bijection satisfies $w(F)=w(g(F))$ and $r(F)=r(g(F))+1$, so that summation over all forests in $\mathcal{H}$ yields the result.
\end{proof}

The following lemma shows the well-known spatial Markov property for the UST, see e.g. \cite{hutchcroft2019planar}, tailored to the rooted forest measure $\Phi_q$. 

\begin{proposition}[\bf{Spatial Markov property}] \label{lem:spatial_markov_property}
Let $G=(V,E,w)$ be a weighted directed graph and $A,B \subseteq E$ two disjoint sets of directed edges. Then it holds for all $F \in \mathcal{F}_{G}$ with $F \cap A=\varnothing$ and $B \subseteq F$ that
\begin{equation}\label{eq:spatial_markov_property}
\P^{(G)}(\Phi_q=F \mid \Phi_q \cap A = \varnothing, \ B \subseteq \Phi_q) = \P^{((G-A)\vec{/}B)}(\Phi_q = F \vec{/} B).
\end{equation}
For any edge $e \in E$ the partition function of $G$ satisfies the deletion-contraction identity
\begin{equation}\label{eq:deletion_contraction}
 Z_G(q)=Z_{G-e}(q)+w(e)Z_{G\vec{/}e}(q).
\end{equation}
Moreover, if $G$ is a symmetric graph, then it holds that 
\begin{equation}\label{eq:spatial_markov_property_undirected}
\P^{(G)}( \Phi_q = F \mid \Phi_q \cap A = \varnothing,  \pm B \subseteq \Phi_q) = \P^{((G-A)/B)}(\Phi_q = F/B),
\end{equation}
where $G/B$ denotes the regular edge contraction of all edges in $B$.
\end{proposition}
\begin{proof}
It is sufficient to show that the statement holds when $\abs{A \cup B}=1$, since the general statement then follows by induction.
First assume that $B = \varnothing$ and $A=\cbrac{e}$ for some edge $e \in E$. 
Let $\mathcal{A}=\cbrac{F \in \mathcal{F}_G \colon e \notin F}$ denote the set of rooted forests of $G$ that do not contain edge $e$.
Write $r(F)$ to denotes the number of roots of the rooted forest $F$.
There is a natural one-to-one correspondence $f:\mathcal{A}\to\mathcal{F}_{G-e}$ given by $f(F)=F$.
Hence, we have for all $F \in \mathcal{A}$ that
\begin{align*}
 \P^{(G)}(\Phi_q=F \mid e \notin \Phi_q) 
 &= \frac{\P^{(G)}(\Phi_q=F)}{\P^{(G)}(e \notin \Phi_q)}= \frac{q^{r(F)}w(F)}{\sum_{H \in \mathcal{A}}q^{r(H)}w(H)}= \frac{q^{r(F)}w(F)}{\sum_{H \in \mathcal{F}_{G-e}}q^{r(f^{-1}(H))}w(f^{-1}(H))} \\
 &= \frac{q^{r(F)}w(F)}{\sum_{H \in \mathcal{F}_{G-e}}q^{r(H)}w(H)} = \frac{q^{r(F)}w(F)}{Z_{G-e}(q)}= \P^{(G-e)}(\Phi_q=F). 
\end{align*}

Assume instead that $A = \varnothing$ and $B=\cbrac{e}$ for some edge $e \in E$. 
Then by \cref{lem:edge_probability} we have for all $F\in \mathcal{F}_{G}$ with $e \in F$ that
\begin{equation*}
 \P^{(G)}(\Phi_q=F \mid e \in \Phi_q) = \frac{\P^{(G)}(\Phi_q=F)}{\P^{(G)}(e \in \Phi_q)} 
 = \frac{q^{r(F)}w(F)}{w(e)Z_{G\vec{/}e}(q)} 
 = \P^{(G\vec{/}e)}(\Phi_q=F/e). 
\end{equation*}

The proof of \cref{eq:deletion_contraction} is analogous to that of \cref{eq:spatial_markov_property}, while \cref{eq:spatial_markov_property_undirected} follows directly from the spatial Markov property for the UST.
\end{proof}

\Cref{lem:graph_extension_single_vertex,lem:graph_extension_single_edge} both represent the same simple combinatorial manipulation, but in two slightly different settings. The same manipulation can be extended beyond the simple setups of these lemmas, but for notational simplicity we stick to these versions, which are tailored to sparse geometries. 

These lemmas are phrased in terms of the \emph{non-normalized} rooted forest measure defined as 
\begin{equation}\label{eq:non-normalized_measure}
\NNM^{(G)}(\Phi_q \in \cdot)=Z_G(q)\P^{(G)}(\Phi_q \in \cdot).  
\end{equation}
This measure has the benefit that the measure of a rooted forest dependends on the geometry of the underlying graph only through the total number of vertices. That is, for any rooted forest $F \in \mathcal{F}_H$ of a subgraph $H$ of $G$ it holds that $q^{m}\NNM^{(H)}(\Phi_q = F)=\NNM^{(G)}(\Phi_q = F)$, where $m$ is the difference between the number of vertices in $G$ and $H$. This simplifies the notation required for various combinatorial manipulations.
\begin{lemma}[Graph extension lemma (single vertex version)]\label{lem:graph_extension_single_vertex}
 Let $G=(V,E,w)$ be a weighted directed graph and $x \in V$ a vertex. 
 Let $R_q$ be the set of root vertices of $\Phi_q$. Let $H=G[V\setminus \{x\}]$ denote the induced subgraph of $G$ obtained by removing vertex $x$.
 Let $\{\mathcal{H}(F) \c F \in \mathcal{F}_H\}$ be the partition of $\mathcal{F}_G$ given by 
 $\mathcal{H}(F) = \{F' \in \mathcal{F}_G \c F'[V \setminus \{x\}] = F\}$, 
 i.e. $\mathcal{H}(F)$ denotes the set of rooted spanning forests of 
 $G$ for which the induced subgraph obtained by removing $x$ equals $F$.
 For each vertex $y \in V \setminus \{x\}$ let $r_y(F)$ denote the unique root in $F$ that is connected to $y$.
 Then it holds for all $F \in \mathcal{F}_H$ that
\begin{equation*}
 \NNM^{(G)}(\Phi_q \in \mathcal{H}(F), \ x \in R_q)
 =q  \ \NNM^{(H)}(\Phi_q  = F) \ \prod_{r \in R(F)} (1+\tfrac{w(r,x)}{q})
\end{equation*}
 and that
\begin{equation*}
 \NNM^{(G)}(\Phi_q \in \mathcal{H}(F), \ x \notin R_q)
 =\NNM^{(H)}(\Phi_q  = F) \ \sum_{y \in V \setminus \{x\}} w(x,y)
 \prod_{r \in R(F) \setminus \{r_y(F)\}} (1+\tfrac{w(r,x)}{q}).
\end{equation*}
Here we take $w(e)=0$ when $e \notin E$.
\end{lemma}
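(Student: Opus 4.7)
The plan is to enumerate $\mathcal{H}(F)$ explicitly and sum the non-normalised weights $q^{r(F')}w(F')$. Any $F' \in \mathcal{H}(F)$ arises from $F$ by adjoining the vertex $x$ together with some collection of edges of $G$ that are incident to $x$. Because every non-root vertex in a rooted forest has outdegree exactly one, the constraint $F'[V\setminus\{x\}]=F$ forces each edge $(v,x)\in F'$ with $v\ne x$ to emanate from a vertex $v$ that is already a root of $F$; otherwise $v$ would acquire a second outgoing edge, contradicting $F'[V\setminus\{x\}]=F$. This observation pins down the admissible incoming edges at $x$.

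I then split according to whether $x$ is a root of $F'$. If $x\in R_q$, then $x$ has no outgoing edge in $F'$, and $F'$ is determined by the subset $S\subseteq R(F)$ of former roots sending an edge into $x$. Every such choice gives a valid rooted forest: no cycle can form because all new edges point inward to the freshly added vertex $x$. One has $r(F')=r(F)-|S|+1$ and $w(F')=w(F)\prod_{r\in S}w(r,x)$, so summing yields
\begin{equation*}
 \sum_{S\subseteq R(F)} q^{r(F)-|S|+1}\,w(F)\prod_{r\in S}w(r,x)
 = q\cdot q^{r(F)}w(F)\prod_{r\in R(F)}\bigl(1+\tfrac{w(r,x)}{q}\bigr),
\end{equation*}
and recognising $q^{r(F)}w(F)=\NNM^{(H)}(\Phi_q=F)$ gives the first identity.

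If $x\notin R_q$, then $x$ has exactly one outgoing edge $(x,y)$ in $F'$ for some $y\in V\setminus\{x\}$. This edge grafts $x$ onto the tree of $F$ rooted at $r_y(F)$. For a further incoming edge $(r,x)$ with $r\in R(F)$ the only new constraint is $r\ne r_y(F)$: if $r=r_y(F)$ were allowed, the directed path $r_y(F)\to x\to y\rightsquigarrow r_y(F)$ would close a cycle, whereas for any $r\in R(F)\setminus\{r_y(F)\}$ the tree of $r$ is merged with the tree containing $y$ without creating one. Parameterising admissible extensions by pairs $(y,S)$ with $y\in V\setminus\{x\}$ and $S\subseteq R(F)\setminus\{r_y(F)\}$, noting $r(F')=r(F)-|S|$ and $w(F')=w(F)w(x,y)\prod_{r\in S}w(r,x)$, and using the convention $w(x,y)=0$ for $(x,y)\notin E$ to sum freely over $y$, the same geometric-series manipulation delivers the second formula.

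The only real subtlety, and the step that must be handled with care, is the simultaneous bookkeeping of the outdegree constraint (only roots of $F$ may send an edge into $x$) and the acyclicity constraint (in the $x\notin R_q$ case, additionally $r_y(F)$ must be excluded). Everything else is a routine geometric sum built from $\NNM^{(G)}(\Phi_q=F')=q^{r(F')}w(F')$.
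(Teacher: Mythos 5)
Your proof is correct and follows essentially the same route as the paper's: in both cases one enumerates the forests in $\mathcal{H}(F)$ by deciding whether $x$ is a root, choosing which roots of $F$ send an edge into $x$ (excluding $r_y(F)$ in the non-root case to avoid a cycle), and tracking the factors $w(r,x)$ and $q^{-1}$ per added edge, which produces the stated products. Your explicit justification of why only roots of $F$ may send edges into $x$ is a slightly more careful rendering of the same bookkeeping the paper performs.
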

\begin{proof}[Proof of \cref{lem:graph_extension_single_vertex}]
 We will first prove the first equality. 
 Let $F_H \in \mathcal{F}_H$ be given.
 Each forest in $F \in \mathcal{H}(F_H)$ with $x \in R(F)$ can be obtained from $F_H$ by adding any number of edges from roots of 
 $F_H$ to $x$. So, for each root we can choose either to add this edge or not to add this edge.
 For each edge we do add there will be one less component, since the root from which that edge originated will cease to be a root in the new forest. This contributes a factor $\tfrac{1}{q}$. We then also have an additional edge, which contributes a factor equal to the weight of that edge. This gives us the product over the roots $r$, where the $1$ term is chosen if no edge is added from $r$ to $x$ and the $\tfrac{w(r,x)}{q}$ term is chosen if we do add such an edge.
 If we don't add any such edges, then the obtained forest will have one more root than $F_H$, so this gives us the additional factor $q$. 
 
 The second equality is proven similarly.
 Each forest in $F \in \mathcal{H}(F_H)$ with $x \notin R(F)$ can be obtained from $F_H$ by first adding a single edge from $x$ to any other vertex $y$. We then add any number of edges from roots of $F_H$ to $x$, but we cannot add an edge from $r_y$ to $x$ as this would create a cycle. 
\end{proof}

\begin{definition}
Let $G=(V,E)$ be a directed graph. Let $A \subseteq V$ be a set of vertices and 
denote by $G[A]$ the induced subgraph of $G$ on the vertices in $A$.
A set $\mathcal{H} \subseteq \mathcal{F}$ of rooted forests of $G$ is said to be \emph{determined} by $A$ 
if there exists an $\mathcal{A} \subseteq \mathcal{F}_{G[A]}$ such that $\mathcal{H}=\cbrac{F \in \mathcal{F}_{G} \c F[A] \in \mathcal{A}}$.
\end{definition}

\begin{lemma}[Graph extension lemma (single edge version)]\label{lem:graph_extension_single_edge}
 Let $G=(V,E,w)$ be a weighted directed graph. Let $\cbrac{A,B}$ be a partition of $V$ and 
 assume that there exists exists exactly one vertex $a \in A$ that is adjacent to any vertices in $B$
 and exactly one vertex $b \in B$ adjacent to any vertices in $A$.
 Write $G[A]$ and $G[B]$ to denote the induced subgraphs on $A$ and $B$.
 Let $\mathcal{A}, \mathcal{B} \subseteq \mathcal{F}$ be sets of rooted forests of $G$ that are 
 determined by $A$ and $B$, respectively, and let $\mathcal{A}' \subseteq \mathcal{F}_{G[A]}$ and $\mathcal{B}' \subseteq \mathcal{F}_{G[B]}$ be such that $\mathcal{A}=\cbrac{F \in \mathcal{F}_{G} \c F[A] \in \mathcal{A}'}$ and $\mathcal{B}=\cbrac{F \in \mathcal{F}_{G} \c F[B] \in \mathcal{B}'}$. Denote by $R_q$ the set of root vertices of $\Phi_q$.
 Then it holds that
 \begin{align*}
  \NNM^{(G)}(\Phi_q \in \mathcal{A} \cap \mathcal{B}) &= 
  \NNM^{(G[A])}(\Phi_q \in\mathcal{A}') \ \NNM^{(G[B])}(\Phi_q \in \mathcal{B}') \\
  &\quad+ \tfrac{w(a,b)}{q} \NNM^{(G[B])}(\Phi_q \in \mathcal{B}') \ \NNM^{(G[A])}(\Phi_q \in \mathcal{A}', \ a \in R_q) \\
  &\quad+\tfrac{w(b,a)}{q}\NNM^{(G[A])}(\Phi_q \in \mathcal{A}') \ \NNM^{(G[B])}(\Phi_q \in \mathcal{B}', \ b \in R_q).
 \end{align*}
\end{lemma}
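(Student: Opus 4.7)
The plan is to decompose the event $\{\Phi_q \in \mathcal{A} \cap \mathcal{B}\}$ according to which of the two possible directed ``bridge'' edges between $A$ and $B$ appear in the forest, and then to verify that each case contributes one of the three summands on the right-hand side.

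First, I would record the key structural observation: the only edges in $G$ that cross the partition $\{A,B\}$ are the directed edges $(b,b')$ and $(b',b)$ (either or both of which may be present as actual edges of $G$, depending on the weights). In any rooted forest every vertex has at most one outgoing edge (its parent edge), and having $\{(b,b'),(b',b)\}\subseteq F$ would create a directed $2$-cycle $b\to b'\to b$. Hence for each $F\in\mathcal{F}_G$ exactly one of the following three mutually exclusive situations occurs: (i) neither bridge edge belongs to $F$; (ii) only $(b,b')$ belongs to $F$; (iii) only $(b',b)$ belongs to $F$.

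Next I would check that, restricted to each class, the map $F\mapsto (F[A],F[B])$ is a bijection onto a cleanly describable product set. In case (i) it maps onto all of $\mathcal{F}_{G[A]}\times\mathcal{F}_{G[B]}$. In case (ii) the outgoing edge of $b$ lies in $B$, so in $F[A]$ the vertex $b$ has no outgoing edge and is forced to be a root of $F[A]$; the image is $\{F_A\in\mathcal{F}_{G[A]}:b\in R(F_A)\}\times\mathcal{F}_{G[B]}$. Case (iii) is symmetric with the roles of $b$ and $b'$ interchanged. Since $\mathcal{A}$ and $\mathcal{B}$ are determined by $A$ and $B$ respectively, the constraint $F\in\mathcal{A}\cap\mathcal{B}$ translates cleanly into $F[A]\in\mathcal{A}'$ and $F[B]\in\mathcal{B}'$ under this bijection.

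The only substantive bookkeeping, and the place I expect small sign/exponent errors to arise, is the computation of $w(F)$ and $r(F)$ in cases with a bridge edge. In case (ii) one has $w(F)=w(b,b')\,w(F[A])\,w(F[B])$; moreover $b$ is a root of $F[A]$ but, carrying an outgoing edge in $F$, is not a root of $F$, so $r(F)=r(F[A])+r(F[B])-1$. This decrement is precisely what produces the factor $q^{-1}$ multiplying $w(b,b')$ in the second summand. Case (iii) is analogous, yielding the factor $w(b',b)/q$ and the requirement $b'\in R_q$ on the $B$-side. Summing $q^{r(F)}w(F)$ over the three classes and factorising each sum as an $A$-part times a $B$-part gives the three claimed terms, completing the proof. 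The deletion--contraction identity~\eqref{eq:deletion_contraction} is not needed here, but the same philosophy of partitioning rooted forests according to a distinguished edge underlies both arguments.
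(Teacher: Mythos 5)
Your proof is correct and follows essentially the same route as the paper: both arguments split the forests of $G$ into the three mutually exclusive cases determined by which (if any) of the bridge edges $(b,b')$, $(b',b)$ is present, identify each case with a product of forest sets on $G[A]$ and $G[B]$ (with the rooting constraint at $b$ or $b'$ on the appropriate side), and account for the factor $w(\cdot)/q$ via the extra edge weight and the loss of one root. The only difference is cosmetic: you restrict a forest of $G$ down to the pair $(F[A],F[B])$, while the paper builds the (at most three) extensions of a given pair $(F_A,F_B)$ upward, which are inverse descriptions of the same correspondence.
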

\begin{proof}
 Let $F_A \in \mathcal{A}'$ and $F_B \in \mathcal{B}'$ be given.
 
 If both $a$ is a root in $F_A$ and $b$ is a root in $F_B$, 
 then there are exactly three forests $F_1, F_2, F_3 \in \mathcal{F}_{G}$ 
 for which the induced subgraphs on $A$ and $B$ correspond to $F_A$ and $F_B$, respectively.
 \begin{enumerate}
  \item The first of these forests consists of the disjoint graph union of $F_A$ and $F_B$ and has non-normalized measure 
 $$\NNM^{(G)}(\Phi_q  = F_1)=\NNM^{(G[A])}(\Phi_q  = F_A) \ \NNM^{(G[B])}(\Phi_q  = F_B).$$
 \item The second has an additional edge from $a$ to $b$ and has non-normalized measure 
 $$\NNM^{(G)}(\Phi_q  = F_2)=\tfrac{w(a,b)}{q}\NNM^{(G[A])}(\Phi_q  = F_A) \ \NNM^{(G[B])}(\Phi_q  = F_B),$$
 since it contains one less root than the sum of the roots in $F_A$ and $F_B$ and one additional edge with weight $w(a,b)$.
 \item The third forest has an additional edge from $b$ to $a$ and it similarly has non-normalized measure 
 $$\NNM^{(G)}(\Phi_q  = F_3)=\tfrac{w(b,a)}{q}\NNM^{(G[A])}(\Phi_q  = F_A) \ \NNM^{(G[B])}(\Phi_q  = F_B).$$
 \end{enumerate}
 Note that each of these three forests is contained in $\mathcal{A} \cap \mathcal{B}$.
 
 If exactly one of the vertices $a$ and $b$ is a root in $F_A$ and $F_B$, then only two of the above mentioned forests are rooted forest of $G$, since adding an outgoing edge to a non-root vertex does not yield a rooted forest.
 
 If both $a$ and $b$ are not roots, then only the first forest without an additional edge is a rooted forest of $G$.
 
 Since each forest in $\mathcal{A} \cap \mathcal{B}$ can be obtained in such a manner, 
 summing over all rooted forests in $\mathcal{A}'$ and $\mathcal{B}'$ yields
 \begin{align*}
  \NNM^{(G)}_q(\mathcal{A} \cap \mathcal{B}) &= 
  \sum_{F_A \in \mathcal{A}'} \sum_{F_B \in \mathcal{B}'} 
  \NNM^{(G[A])}(\Phi_q  = F_A)\NNM^{(G[B])}(\Phi_q  = F_B) \\
  &\hspace{9 em}+\tfrac{w(a,b)}{q}\NNM^{(G[A])}(\Phi_q  = F_A)\NNM^{(G[B])}(\Phi_q  = F_B) \mathbf{1}_{\cbrac{b \in R(F_A)}} \\
  &\hspace{9 em}+\tfrac{w(b,a)}{q}\NNM^{(G[A])}(\Phi_q  = F_A)\NNM^{(G[B])}(\Phi_q  = F_B) \mathbf{1}_{\cbrac{b' \in R(F_B)}} \\
  &= \NNM^{(G[A])}(\Phi_q \in\mathcal{A}') \ \NNM^{(G[B])}(\Phi_q \in \mathcal{B}') \\
  &\quad+ \tfrac{w(a,b)}{q}\NNM^{(G[A])}(\Phi_q \in \mathcal{A}', \ a \in R_q) \ \NNM^{(G[B])}(\Phi_q \in \mathcal{B}') \\
  &\quad+\tfrac{w(b,a)}{q}\NNM^{(G[A])}(\Phi_q \in \mathcal{A}') \ \NNM^{(G[B])}(\Phi_q \in \mathcal{B}', \ b \in R_q).
 \end{align*}
\end{proof}

%%%%%%%%%%%%%%%%%%%%%GENERAL PROOFS%%%%%%%%%%%%%%%%%%%%%%%%%%%%%%%%%%%%%%

%%%%%%%% Sequential proof of monotonicity of correlation on undirected graphs
\subsection{Monotonicities on undirected networks: proof of Theorem
\ref{thm:monotone_correlations_symmetric}}\label{ssec:monotonicity_proofs_symmetric}

\begin{lemma}\label{thm:increasing_roots_edge_removal}
 Let $G=(V,E,w)$ be a weighted symmetric graph and let $B \subseteq E$ be a symmetric subset of directed edges, i.e. $(x,y) \in B \implies (y,x) \in B$.
 Then for all $q>0$ it holds that
 \begin{equation*}
 \E[r_q \mid B \cap \Phi_q=\varnothing] \geq \E[r_q]. 
 \end{equation*}
\end{lemma}
\begin{proof}
  Let $H=G-B$ denote the subgraph of $G$ obtained by removing all edges in $B$.
  Let $L^{(G)}$ and $L^{(H)}$ denote the graph Laplacians of $G$ and $H$, respectively. 
  Since these Laplacians are symmetric, $-L^{(G)}$ and $-L^{(H)}$ have real eigenvalues $\lambda_n \geq\ldots\geq\lambda_1$ and $\mu_n \geq\ldots\geq\mu_1$, respectively.
  By Weyl's monotonicity principle, these eigenvalues satisfy 
  $\lambda_i \geq \mu_i$ for all $i \in [n]$.
  It follows that $\mathrm{Tr}((qI-L^{(G)})^{-1}) \leq \mathrm{Tr}((qI-L^{(H)})^{-1})$.
  By the spatial Markov property and \cite[prop~2.1]{avena2018applications} it then holds that
  \begin{equation*}
    \E^{(G)}[r_q \mid B \cap \Phi_q=\varnothing] 
    = \E^{(H)}[r_q] 
    = q\mathrm{Tr}((qI-L^{(H)})^{-1}) 
    \geq q\mathrm{Tr}((qI-L^{(G)})^{-1}) 
    = \E^{(G)}[r_q].
  \end{equation*}
\end{proof}

\begin{lemma}\label{thm:decreasing_roots_edge_conditioning}
 Let $G=(V,E,w)$ be a be a weighted symmetric graph and $A \subseteq E$.
 If $\P(\pm A \subseteq \Phi_q)>0$, then for all $q>0$ it holds that
 \begin{equation*}
 \E[r_q \mid \pm A \subseteq \Phi_q] \leq \E[r_q]. 
 \end{equation*}
\end{lemma}
\begin{proof}
  Let $e=(x,y) \in A$ be given. By \cref{lem:spatial_markov_property} and \cref{thm:increasing_roots_edge_removal} it holds that
  \begin{align*}
    \E[r_q \mid \pm A \subseteq \Phi_q] 
    &= \frac{\E[r_q \mid \pm(A-e) \subseteq \Phi_q]
    -\E[r_q \mid  \pm(A-e) \subseteq \Phi_q, \pm e \notin \Phi_q]\P(\pm e \notin \Phi_q\mid \pm(A-e) \subseteq \Phi_q)}{\P(\pm e \in \Phi_q\mid \pm(A-e) \subseteq \Phi_q)} \\
     &= \frac{\E[r_q \mid \pm(A-e) \subseteq \Phi_q]
    -\E^{(G/(A-e))}[r_q \mid \pm e \notin \Phi_q]\P(\pm e \notin \Phi_q\mid \pm(A-e) \subseteq \Phi_q)}{\P(\pm e \in \Phi_q\mid \pm(A-e) \subseteq \Phi_q)} \\
    &\leq \frac{\E[r_q \mid \pm(A-e) \subseteq \Phi_q]
    -\E^{(G/(A-e))}[r_q]\P(\pm e \notin \Phi_q\mid \pm(A-e) \subseteq \Phi_q)}{\P(\pm e \in \Phi_q\mid \pm(A-e) \subseteq \Phi_q)} \\
    &= \frac{\E[r_q \mid \pm(A-e) \subseteq \Phi_q]
    -\E[r_q  \mid \pm(A-e) \subseteq \Phi_q]\P(\pm e \notin \Phi_q\mid \pm(A-e) \subseteq \Phi_q)}{\P(\pm e \in \Phi_q\mid \pm(A-e) \subseteq \Phi_q)} \\
    &= \E[r_q \mid \pm(A-e) \subseteq \Phi_q].
  \end{align*}
Hence, the result follows by induction on $\abs{A}$.
\end{proof}
 
\begin{proof}[Proof of \cref{thm:monotone_correlations_symmetric}]
The proof of~\eqref{MonEdgeSet} follows directly from \cref{thm:decreasing_roots_edge_conditioning,thm:derivative_probabilities}.

For the statement about the pairwise LEP potential in~\eqref{MonPotential} we argue as follows.
Fix $q >0$. By \cref{thm:derivative_probabilities} it is sufficient to show that 
  $\E[r_q] \geq \E[r_q \mid B_q(x)=B_q(y)]$.
  
  Let $\mathcal{P}$ denote the set of undirected paths from $x$ to $y$, where we interpret a path as a set of directed edges. Then the event 
  $\cbrac{B_q(x)=B_q(y)}$ can be written as the disjoint union
  \begin{equation*}
   \cbrac{B_q(x)=B_q(y)} = \bigcup_{\pi \in \mathcal{P}} \cbrac{\pm \pi \subseteq \Phi_q}. 
  \end{equation*}
It follows by \cref{thm:decreasing_roots_edge_conditioning} that
\begin{align*}
  \E[r_q \mid B_q(x)=B_q(y)] 
  &= \sum_{\pi \in \mathcal{P}} \E[r_q \mid \pm \pi \subseteq \Phi_q]\P(\pm \pi \subseteq \Phi_q \mid B_q(x)=B_q(y)) \\
  & \leq \sum_{\pi \in \mathcal{P}} \E[r_q]\P(\pm  \pi \subseteq \Phi_q \mid B_q(x)=B_q(y)) = \E[r_q].
\end{align*}
\end{proof}

\section{Two-points correlations on trees}
\subsection{Monotonicity of correlations on general trees }\label{ssec:monotonicity_proofs_trees}

Below we show the monotonicity of the 2-point correlation restricted to arbitrary trees. We will start by expressing the 2-point correlation via hitting times in \cref{lem:hitting_time_expression}. Then in \cref{prop:monotone_rooting} we then show the monotocity of one point rooting events, by means of \cref{thm:derivative_probabilities}. After a last  
bound on the derivatives of hitting time events \cref{lem:derivative_bound_conditional_rooting},
we derive the main claim using these three lemmas.

\begin{lemma}[Hitting time expression for two-point correlation between adjacent vertices in trees]\label{lem:hitting_time_expression}
 Let $G=(V,E,w)$ be a weighted directed tree and $x,y \in V$ two adjacent vertices.
 Let $\P_v$ denote the law of the random walk $X$ starting at vertex $v \in V$.
 The hitting time of vertex $v$ by $X$ is denoted by $\tau_v$ and $\tau_{q}$ is an independent exponential killing time with rate $q$.
 Then it holds that 
\begin{align*}
 \correlation[G]{q}{x}{y}
 =\frac{1-\P_x(\tau_y < \tau_{q})-\P_y(\tau_x < \tau_{q})+\P_x(\tau_y < \tau_{q})\P_y(\tau_x < \tau_{q})}  
 {1-\P_x(\tau_y < \tau_{q})\P_y(\tau_x < \tau_{q})}.
\end{align*}
\end{lemma}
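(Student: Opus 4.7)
The plan is to start from the loop-erased random walk representation of $U_q(x,y)$ given in \eqref{LEdec} and reduce it to a pure random-walk excursion computation. Since $x$ and $y$ are adjacent in a tree, removing the edge $\{x,y\}$ partitions $V$ into two subtrees $T_x \ni x$ and $T_y \ni y$. Any self-avoiding path $\gamma$ starting at $x$ in $G$ either contains $y$ or is entirely contained in $T_x$ (any self-avoiding path leaving $T_x$ must cross the unique edge $\{x,y\}$, and hence contain $y$). In the first case the factor $\P_y(\tau_\gamma > \tau_q)$ in \eqref{LEdec} vanishes, because $y\in\gamma$ forces $\tau_\gamma = 0$ under $\P_y$. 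So only paths $\gamma \subseteq T_x$ with $x\in\gamma$ contribute.

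The crucial observation is that for any such $\gamma \subseteq T_x$, since the only edge joining $T_x$ and $T_y$ is $\{x,y\}$, a walker started at $y$ can reach $\gamma$ only by first entering $x$. Hence $\tau_\gamma = \tau_x$ under $\P_y$, and thus $\P_y(\tau_\gamma > \tau_q) = 1 - \P_y(\tau_x < \tau_q)$ independently of the specific $\gamma$. This factor pulls out of the sum in \eqref{LEdec}, yielding
\[
  U_q(x,y) = \paren*{1 - \P_y(\tau_x < \tau_q)} \cdot \P_x^{LE_q}\paren*{\text{LERW}\subseteq T_x}.
\]

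The main step is then to compute the second factor. Since $G$ is a tree, the LERW path from $x$ is exactly the unique tree path from $x$ to the vertex $X_{\tau_q}$ at which the underlying random walk is killed; hence $\{\text{LERW}\subseteq T_x\} = \{X_{\tau_q}\in T_x\}$. Writing $p := \P_x(\tau_y < \tau_q)$ and $p' := \P_y(\tau_x < \tau_q)$, I would apply the strong Markov property at $x$ and $y$ together with the memorylessness of $\tau_q$: each ``round'' starting at $x$ either ends with killing in $T_x$ (probability $1-p$), or the walker reaches $y$ (probability $p$) and then either is killed in $T_y$ before returning to $x$ (contributing $p(1-p')$), or returns to $x$ (probability $pp'$) and the round restarts. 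Summing the resulting geometric series gives
\[
  \P_x(X_{\tau_q}\in T_x) = (1-p)\sum_{k\geq 0}(pp')^k = \frac{1-p}{1-pp'}.
\]
Multiplying by the $(1-p')$ already factored out and expanding $(1-p)(1-p') = 1 - p - p' + pp'$ reproduces the stated formula.

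I do not foresee a serious obstacle. The two points requiring a bit of care are (i) the identification $\{\text{LERW}\subseteq T_x\} = \{X_{\tau_q}\in T_x\}$, which relies crucially on the tree structure (in a graph with cycles the loop-erasure need not be the unique underlying path); and (ii) the clean ``restart'' of the excursion recursion, which uses the strong Markov property at $x$ and the memorylessness of the exponential killing time $\tau_q$ to ensure that each return to $x$ starts a probabilistic copy of the original round independent of the past.
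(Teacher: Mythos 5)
Your proof is correct and follows essentially the same route as the paper: both arguments rest on Wilson's algorithm with the first walk from $x$ and the second from $y$, and on the same geometric series over successive crossings of the edge $\{x,y\}$ before the exponential killing. Your factor $\P_x(X_{\tau_q}\in T_x)=\tfrac{1-p}{1-pp'}$ is exactly the paper's $1-\P\big((x,y)\in\Phi_q\big)$, and multiplying by $1-\P_y(\tau_x<\tau_q)$ reproduces the paper's algebra verbatim.
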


\begin{proof}[Proof of \cref{lem:hitting_time_expression}]
We will reason using the representation in~\eqref{LEdec} coming from Wilson's sampling construction.
We note in particular that in order for the directed edge $(x,y)$ to be present in $\Phi_q$, it is equivalent to require that the loop-erased trajectory in~\eqref{LEdec} includes $y$, which can be expressed in terms of hitting times of the random walk as 
\begin{align*}
 \P((x,y) \in \Phi_q) &= \P_x(\tau_y < \tau_{q})
 \sum_{k=0}^{\infty}\paren*{\P_y(\tau_x < \tau_{q})\P_x(\tau_y < \tau_{q})}^k\P_y(\tau_{q} < \tau_x) \\
 &=\frac{\P_x(\tau_y < \tau_{q})\paren*{1-\P_y(\tau_x < \tau_{q})}}{1-\P_x(\tau_y < \tau_{q})\P_y(\tau_x < \tau_{q})}, 
\end{align*}
where the index $k$ in the above sum represents the number of times that the random walk reaches $y$ and then does return to $x$. We notice in particular that the above step is equivalent to use the forest transfer-current kernel in~\cite{avena2018transfercurrent}.

For the reversed edge $(y,x)$, we can write
\begin{align*}
 \P((y,x) \in \Phi_q) &= \paren*{1-\P((x,y) \in \Phi_q)}\P_y(\tau_x < \tau_{q}),
\end{align*}
where these two factors correspond to~\eqref{LEdec}. 
Therefore, it follows that
\begin{align*}
 \correlation[G]{q}{x}{y} &=  1-\P((x,y) \in \Phi_q)-\P((y,x) \in \Phi_q) \\
 &=1-\tfrac{\P_x(\tau_y < \tau_{q})\paren*{1-\P_y(\tau_x < \tau_{q})}}{1-\P_x(\tau_y < \tau_{q})\P_y(\tau_x < \tau_{q})}
 -\paren*{1-\tfrac{\P_x(\tau_y < \tau_{q})\paren*{1-\P_y(\tau_x < \tau_{q})}}{1-\P_x(\tau_y < \tau_{q})\P_y(\tau_x < \tau_{q})}}\P_y(\tau_x < \tau_{q}) \\ 
 &=\frac{1-\P_x(\tau_y < \tau_{q})-\P_y(\tau_x < \tau_{q})+\P_x(\tau_y < \tau_{q})\P_y(\tau_x < \tau_{q})}{1-\P_x(\tau_y < \tau_{q})\P_y(\tau_x < \tau_{q})}.
\end{align*}
\end{proof}

\begin{lemma}[Bound on derivative of hitting probabilities]\label{lem:derivative_bound_hitting}
Let $G=(V,E,w)$ be a weighted directed graph and $x,y \in V$ two vertices.
Let $\P^{(q)}_x$ denote the law of the random walk $X$ on  $G$ starting at $x$.
For each $v \in V$ let $\tau_v$ denote the hitting time of $v$ by $X$ and let $\tau_q$ be 
an independent exponential killing time with rate $q$.
Then it holds for the derivative of the function $q \mapsto \P_x(\tau_{y} < \tau_{q})$ that 
\begin{equation}\label{1}
 \tfrac{1}{q}\P_x(\tau_{y} < \tau_{q})-\tfrac{1}{q} \leq \frac{d}{d q}\P_x(\tau_{y} < \tau_{q}) \leq 0.
\end{equation}
\end{lemma}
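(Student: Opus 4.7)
The plan is to reduce both inequalities to a single analytic fact about the function $q \mapsto e^{-qt}$ via a clean probabilistic representation of the hitting probability. Since $\tau_q$ is an independent exponential variable of rate $q$, conditioning on $\tau_y$ gives
\begin{equation*}
 \P_x(\tau_y < \tau_q) = \E_x\bigl[\P(\tau_q > \tau_y \mid \tau_y)\bigr] = \E_x\bigl[e^{-q\tau_y}\bigr],
\end{equation*}
where we adopt the convention $e^{-q\infty}=0$ so that the identity covers the event $\{\tau_y=\infty\}$ without special treatment. Differentiating under the expectation (justified by dominated convergence, since $\tau_y e^{-q\tau_y} \le (eq)^{-1}$ pointwise for any fixed $q>0$), we obtain
\begin{equation*}
 \tfrac{d}{dq}\P_x(\tau_y < \tau_q) = -\E_x\bigl[\tau_y e^{-q\tau_y}\bigr].
\end{equation*}

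The upper bound in~\eqref{1} is then immediate, because $\tau_y e^{-q\tau_y} \ge 0$. For the lower bound, I would use the elementary pointwise inequality
\begin{equation*}
 \frac{1-e^{-qt}}{q} \;=\; \int_0^t e^{-qs}\,ds \;\ge\; \int_0^t e^{-qt}\,ds \;=\; t\,e^{-qt}, \qquad t\ge 0, \ q>0,
\end{equation*}
which holds because $s \mapsto e^{-qs}$ is decreasing for $q>0$. Taking expectations under $\P_x$ with $t=\tau_y$ yields
\begin{equation*}
 \E_x\bigl[\tau_y e^{-q\tau_y}\bigr] \;\le\; \frac{1-\E_x[e^{-q\tau_y}]}{q} \;=\; \frac{1-\P_x(\tau_y<\tau_q)}{q},
\end{equation*}
and multiplying by $-1$ rearranges this exactly into the lower bound claimed in~\eqref{1}.

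I do not anticipate any substantive obstacle: the main subtlety is the convention $e^{-q\infty}=0$ on $\{\tau_y=\infty\}$, and the routine but necessary check that dominated convergence applies so that differentiation may be exchanged with the expectation. Once these are in place, the whole argument amounts to the single calculus inequality $(1-e^{-qt})/q \ge te^{-qt}$ and monotonicity of the Laplace transform.
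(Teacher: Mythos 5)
Your argument is correct, and it takes a genuinely different route from the paper. The paper proves this bound by passing to the discrete-time skeleton $\tilde X$ with transition matrix $I+\alpha^{-1}L$ ($\alpha$ the maximal diagonal entry of $-L$) and an independent geometric killing time of parameter $q/(q+\alpha)$, expanding $\P_x(\tau_y<\tau_q)=\sum_{k\ge1}\tilde\P_x(\tau_y<k)\,\tfrac{q}{q+\alpha}\bigl(\tfrac{\alpha}{q+\alpha}\bigr)^{k-1}$, differentiating the series term by term, and bounding the correction term by $\tfrac{1}{q+\alpha}\E[\tilde\tau_q]=\tfrac1q$. You instead use the Laplace-transform identity $\P_x(\tau_y<\tau_q)=\E_x[e^{-q\tau_y}]$, differentiate under the expectation to get $-\E_x[\tau_y e^{-q\tau_y}]$, and obtain both bounds from the single pointwise inequality $t e^{-qt}\le(1-e^{-qt})/q$. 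Your version is shorter, stays in continuous time, avoids the uniformization constant $\alpha$ entirely, and makes transparent that the lower bound is just monotonicity of $s\mapsto e^{-qs}$; the paper's discretization, on the other hand, sets up the skeleton-plus-geometric-killing framework that it reuses in several later proofs (the edge-probability bound, the path-graph correlation bounds, the hierarchical-tree estimates), so it earns its keep there. Two small points to tighten: on $\{\tau_y=\infty\}$ interpret $\tau_y e^{-q\tau_y}$ as $0$ (it is the derivative of the identically vanishing function of $q$ on that event), so that both the differentiation step and the pointwise inequality remain valid there; and for the interchange of derivative and expectation the dominating function should control the difference quotients locally uniformly in $q$, e.g. by the mean value theorem $\abs*{\tfrac{e^{-(q+h)\tau_y}-e^{-q\tau_y}}{h}}\le \tau_y e^{-q\tau_y/2}\le \tfrac{2}{eq}$ for $\abs{h}\le q/2$, rather than only the bound at the fixed value $q$. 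With those clarifications the proof is complete.
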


In the subsequent proofs it will be convenient to work with the discrete-time skeleton of the random walk $X$, that is, the discrete-time random walk $\tilde{X}$ on $G$ with transition matrix 
\begin{equation}\label{eq:discrete_time_rw}
P=I+\tfrac{1}{\alpha}L,
\end{equation}
with $\alpha$ the maximal diagonal entry of the negative graph Laplacian $-L$.
The path measure of $\tilde{X}$ starting at $x$ is denoted by $\tilde{\P}_x$. For $\tilde{\tau}_{q}$ an independent ($\N$-valued) geometric killing time with success probability $\tfrac{q}{q+\alpha}$, it then holds that $\P_x(\tau_{y} < \tau_{q})=\tilde{\P}_x(\tau_{y} < \tilde{\tau}_{q})$. Since the law of the loop-erased trajectory of $\tilde{X}$ corresponds to that of $X$, we can also use this discrete-time random walk to analyze \cref{LEdec}.

\begin{proof}[Proof of \cref{lem:derivative_bound_hitting}]
The upper bound on the derivative in~\eqref{1} is immediate, we therefore show the lower bound.

Let $\tilde{\P}_x$ denote the law of the discrete-time random walk $\tilde{X}$, as defined in \cref{eq:discrete_time_rw}. Then it holds that 
\begin{align*}
\P_x(\tau_{y} < \tau_{q}) &= \tilde{\P}_x(\tau_{y} < \tilde{\tau}_{q}) = \sum_{k=1}^{\infty}\tilde{\P}_x(\tau_{y} < k)\P(\tilde{\tau}_{q}=k) 
= \sum_{k=1}^{\infty}\tilde{\P}_x(\tau_{y} < k)\tfrac{q}{q+\alpha}\paren*{1-\tfrac{q}{q+\alpha}}^{k-1}.
\end{align*}
Since $\tilde{\P}_x(\tau_{y} < k)$ does not depend on $q$, it follows that
 \begin{align*}
 \frac{d}{d q}\P_x(\tau_{y} < \tau_{q})
 &=\sum_{k=1}^{\infty}\tilde{\P}_x(\tau_{y} < k) \frac{(1-k)q+\alpha}{(q+\alpha)^2}\paren*{\frac{\alpha}{q+\alpha}}^{k-1} =\tfrac{1}{q}\P_x(\tau_{y} < \tau_{q})-\sum_{k=1}^{\infty}\tilde{\P}_x(\tau_{y} < k)\frac{kq}{(q+\alpha)^2}\paren*{\frac{\alpha}{q+\alpha}}^{k-1} \\
 &\geq \tfrac{1}{q}\P_x(\tau_{y} < \tau_{q})
 -\sum_{k=1}^{\infty}\frac{kq}{(q+\alpha)^2}
 \paren*{\frac{\alpha}{q+\alpha}}^{k-1} 
 =\tfrac{1}{q}\P_x(\tau_{y} < \tau_{q})-\tfrac{1}{q+\alpha}\E(\tilde{\tau}_{q}) 
 =\tfrac{1}{q}\P_x(\tau_{y} < \tau_{q})-\tfrac{1}{q}.
 \end{align*}
\end{proof}

\begin{lemma}[Monotonicity of rooting probabilities] \label{prop:monotone_rooting}
 Let $G=(V,E,w)$ be a weighted directed graph and $x \in V$ a vertex. 
 Let $\P$ denote the law of a random rooted spanning forest $\Phi_q$ of $G$ with rooting parameter $q>0$.
 Let $R_q$ denote the set of roots of $\Phi_q$.
 Then it holds that
 \begin{equation}
  0 \leq \frac{d}{dq}\P(x \in R_q) \leq \tfrac{1}{q}\P(x \in R_q).
 \end{equation}
\end{lemma}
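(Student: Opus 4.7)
My approach is to compute the derivative $\tfrac{d}{dq}\P(x\in R_q)$ directly from the resolvent representation $\P(x\in R_q)=K_q(x,x)=q\bigl[(qI-L)^{-1}\bigr]_{x,x}$ provided by \eqref{DetR}. Differentiating in $q$ and using the matrix identity $\tfrac{d}{dq}(qI-L)^{-1}=-(qI-L)^{-2}$ gives the clean formula
\[
 \tfrac{d}{dq}K_q(x,x) \;=\; \tfrac{1}{q}\bigl[\,K_q(x,x)-K_q^2(x,x)\,\bigr],
\]
where $K_q^2(x,x)=\sum_y K_q(x,y)K_q(y,x)$ denotes the $(x,x)$-entry of $K_q\!\cdot\! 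K_q$. The two claimed inequalities are then equivalent to $0\le K_q^2(x,x)\le K_q(x,x)$.

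The upper inequality $K_q^2(x,x)\ge 0$ is essentially free: from the integral representation $K_q=q\int_0^\infty e^{-qt}e^{Lt}\,dt$ together with the fact that $L$ is a CTMC generator (so $e^{Lt}$ is a sub-stochastic matrix with non-negative entries), every entry of $K_q$ is non-negative. Consequently $K_q^2(x,x)=\sum_y K_q(x,y)K_q(y,x)\ge 0$, which gives the upper bound on the derivative in the statement.

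The non-trivial inequality is $K_q^2(x,x)\le K_q(x,x)$. To handle it, I will exploit the probabilistic interpretation $K_q(x,x)=\P_x(X_{\tau_q}=x)$, with $\tau_q\sim\mathrm{Exp}(q)$ independent of the CTMC $X$ generated by $L$, and perform a first-exit/first-return decomposition at $x$. Writing $\alpha:=-L(x,x)$ for the exit rate from $x$, and $\beta(q):=\E_Y[e^{-q\tau_x}]$ for the Laplace transform of the hitting time $\tau_x$ starting from $Y$, the random state into which $X$ jumps on first leaving $x$ (so $Y$ has distribution $w(x,\cdot)/\alpha$), the memorylessness of $\tau_q$ combined with the strong Markov property at $\tau_x$ yields the closed form
\[
 K_q(x,x) \;=\; \frac{q}{q+\gamma(q)},\qquad \gamma(q)\;:=\;\alpha\bigl(1-\beta(q)\bigr).
\]
Substituting this representation into the derivative identity above, the desired bound $K_q^2(x,x)\le K_q(x,x)$ rearranges (by the quotient rule) to $\gamma(q)-q\gamma'(q)\ge 0$, which in turn is equivalent to
\[
 \beta(q)-q\beta'(q)\;=\;\E_Y\!\bigl[e^{-q\tau_x}(1+q\tau_x)\bigr]\;\le\;1.
\]

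The main conceptual step is the setup of this renewal identity for $K_q(x,x)$; once it is in place, the final inequality is immediate from the elementary pointwise bound $e^{-u}(1+u)\le 1$ for all $u\ge 0$ (itself following from $\tfrac{d}{du}[e^{-u}(1+u)]=-ue^{-u}\le 0$ with equality at $u=0$), applied inside the expectation with $u=q\tau_x\ge 0$. Thus the only non-routine ingredient of the plan is the first-return decomposition producing $K_q(x,x)=q/(q+\gamma(q))$; everything else reduces to quotient-rule bookkeeping and this one-line calculus estimate.
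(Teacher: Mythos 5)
Your proof is correct, and its core coincides with the paper's: the first-return identity $K_q(x,x)=q/(q+\gamma(q))$ with $\gamma(q)=\alpha(1-\beta(q))=\sum_{v}w(x,v)\bigl(1-\P_v(\tau_x<\tau_q)\bigr)$ is exactly the renewal equation on which the paper's proof of \cref{prop:monotone_rooting} rests. Where you genuinely diverge is in how the two inequalities are finished. For the monotonicity $\frac{d}{dq}\P(x\in R_q)\ge 0$, the paper invokes its separate \cref{lem:derivative_bound_hitting}, proved there via the discrete-time skeleton and a geometric-killing series; your reduction to $\E_Y\bigl[e^{-q\tau_x}(1+q\tau_x)\bigr]\le 1$ is precisely the content of that lemma's lower bound, but your proof of it — differentiate the Laplace transform and apply the pointwise bound $e^{-u}(1+u)\le 1$ — is shorter and self-contained (just record that differentiation under the expectation is legitimate, since $\tau e^{-q\tau}$ is uniformly bounded for $q$ in compact subsets of $(0,\infty)$, and that the event $\tau_x=\infty$ contributes $0$). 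For the upper bound $\frac{d}{dq}\P(x\in R_q)\le \tfrac1q\P(x\in R_q)$ you use the resolvent identity $\frac{d}{dq}K_q=\tfrac1q\bigl(K_q-K_q^2\bigr)$ together with entrywise nonnegativity of $K_q=q\int_0^\infty e^{-qt}e^{Lt}\,dt$ — valid also for directed, non-symmetric $L$ — whereas the paper gets it from the quotient-rule expression using $\frac{d}{dq}\P_v(\tau_x<\tau_q)\le 0$; both work, and your identity has the extra merit of exhibiting both bounds as equivalent to $0\le K_q^2(x,x)\le K_q(x,x)$. Two cosmetic points: you call $K_q^2(x,x)\ge 0$ the ``upper inequality'' although it is the lower half of that chain (it does, as you say, yield the upper bound on the derivative), and $e^{Lt}$ is stochastic rather than sub-stochastic; neither affects the argument.
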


\begin{proof}[Proof of \cref{prop:monotone_rooting} via \cref{lem:derivative_bound_hitting}]
Due to the determinantality of the roots in~\eqref{DetR}, we have that $x$ is a root in $\Phi_q$ if 
\begin{align*}
 \P(x \in R_q) &=K_q(x,x)=q(q-L)^{-1}(x,x)=\P_x(X_{\tau_{q}} = x).
\end{align*}
Let $N_x$ denote the set of out-neighbours of $x$ in $G$.
Let $\sigma=\inf\cbrac{t > 0 \c X_t \neq X_0}$ be the first jump time of $X$.
Then by the Markov property of $X$ we have that 
\begin{align*}
 \P_x(X_{\tau_{q}} = x) &= \P_x(\sigma > \tau_q)
 + \sum_{v \in N_x}\P_x(X_{\sigma} = v)\P_v(\tau_{x} < \tau_{q})\P_x(X_{\tau_{q}} = x).
\end{align*}
Solving this equation gives us that
\begin{align*}
\P_x(X_{\tau_{q}} = x) &= \frac{\P_x(\sigma > \tau_q)}{1-\sum_{v \in N_x}\P_x(X_{\sigma} = v)\P_v(\tau_{x} < \tau_{q})} 
= \frac{q}{q+\sum_{v \in N_x}w(x,v)(1-\P_v(\tau_{x} < \tau_{q}))}.
\end{align*}
It follows by \cref{lem:derivative_bound_hitting} that
\begin{align*}
\frac{d}{d q} \P(x \in R_q) &=  \frac{d}{d q}\P_x(X_{\tau_{q}} = x) = \frac{\sum_{v \in N_x}w(x,v)\paren*{1-\P_v(\tau_{x} < \tau_{q})+q\tfrac{d}{d q}\P_v(\tau_{x} < \tau_{q}))}}
 {\paren*{q+\sum_{v \in N_x}w(x,v)(1-\P_v(\tau_{x} < \tau_{q}))}^2}  \geq 0,
\end{align*}
which proves the lower bound.
For the upper bound it holds that
\begin{align*}
\frac{\frac{d}{d q} \P(x \in R_q)} {\P(x \in R_q)} 
&= \frac{\sum_{v \in N_x}w(x,v)\paren*{1-\P_v(\tau_{x} < \tau_{q})+q\tfrac{d}{d q}\P_v(\tau_{x} < \tau_{q}))}}
{q\paren*{q+\sum_{v \in N_x}w(x,v)(1-\P_v(\tau_{x} < \tau_{q}))}} \\
&\leq \frac{\sum_{v \in N_x}w(x,v)\paren*{1-\P_v(\tau_{x} < \tau_{q})}}
{q\paren*{q+\sum_{v \in N_x}w(x,v)(1-\P_v(\tau_{x} < \tau_{q}))}} \leq \frac{1}{q}.
\end{align*}
\end{proof}

\begin{lemma}[Bound on conditional rooting derivative in trees] \label{lem:derivative_bound_conditional_rooting}
 Let $G=(V,E,w)$ be a weighted directed tree and $x,y \in V$ two vertices.
 Then it holds that
 \begin{equation}
 \tfrac{d}{dq} \P(x \in R_q \mid x \leftrightarrow y) \leq \tfrac{1}{q} \P(x \in R_q \mid x \leftrightarrow y).
 \end{equation}
\end{lemma}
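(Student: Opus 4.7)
The plan is to derive an explicit closed-form expression for $\P(x \in R_q \mid x \leftrightarrow y)$ and then reduce the claim to the single-vertex rooting monotonicity \cref{prop:monotone_rooting} applied on subtrees of $G$. Let $x = v_0, v_1, \ldots, v_d = y$ denote the unique (undirected) path from $x$ to $y$ in $G$, and for each $k \in \{0, \ldots, d\}$ let $T_k$ denote the connected component containing $v_k$ of the graph obtained from $G$ by deleting all edges of this path. Since $G$ is a tree, the event $\{x \leftrightarrow y\}$ in $\Phi_q$ decomposes uniquely according to the \emph{apex} $v_k$ of the path, i.e., the vertex toward which all path edges of $\Phi_q$ are directed. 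Define the weight product
\[
  \omega_k := \prod_{i=0}^{k-1} w(v_i, v_{i+1}) \, \prod_{i=k+1}^{d} w(v_i, v_{i-1}),
\]
corresponding to the orientations of the path edges consistent with apex $v_k$.

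First I would iterate \cref{lem:graph_extension_single_edge} along the path: at each step, one bridge edge of the path is peeled off together with its incident subtree, forcing the source vertex of that bridge to be a root within its subtree-side. Carrying this out for all $d$ bridge edges at a fixed apex $v_k$ gives
\[
 \nu^{G}(\text{apex at } v_k) = \frac{\omega_k}{q^d} \, Z_{T_k}(q) \prod_{i \neq k} Z_{T_i}^{R}(q),
\]
where $\nu^G = Z_G(q)\,\P$ is the non-normalised forest measure, $Z_{T_i}(q)$ is the partition function of $T_i$, and $Z_{T_i}^{R}(q) := \nu^{T_i}(v_i \in R_q)$ is its restriction to forests of $T_i$ in which $v_i$ is a root. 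For the joint event $\{x \in R_q, \, x \leftrightarrow y\}$ the apex must be $v_0 = x$, and in addition $x$ must have no outgoing edge in $T_0$, so the factor $Z_{T_0}$ is replaced by $Z_{T_0}^{R}$, yielding
\[
 \nu^G(x \in R_q, \, x \leftrightarrow y) = \frac{\omega_0}{q^d} \prod_{i=0}^{d} Z_{T_i}^{R}(q).
\]
Summing the apex decomposition over $k$ produces $\nu^G(x \leftrightarrow y)$, and taking the ratio gives the key formula
\[
 \P(x \in R_q \mid x \leftrightarrow y) = \frac{\omega_0}{\sum_{k=0}^{d} \omega_k / \pi_k(q)},
 \qquad \pi_k(q) := \P^{T_k}(v_k \in R_q) = \frac{Z_{T_k}^{R}(q)}{Z_{T_k}(q)}.
\]

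From this representation the derivative bound will fall out cleanly. Rearranging,
\[
 \frac{q}{\P(x \in R_q \mid x \leftrightarrow y)} = \sum_{k=0}^{d} \frac{\omega_k}{\omega_0} \cdot \frac{q}{\pi_k(q)},
\]
a non-negative linear combination with coefficients independent of $q$. By \cref{prop:monotone_rooting} applied on each subtree $T_k$, $\pi_k'(q) \leq \pi_k(q)/q$, which is equivalent to $q \mapsto q/\pi_k(q)$ being non-decreasing. Therefore $q \mapsto q/\P(x \in R_q \mid x \leftrightarrow y)$ is non-decreasing, i.e., $q \mapsto \P(x \in R_q \mid x \leftrightarrow y)/q$ is non-increasing, which is exactly the claim $\tfrac{d}{dq}\P(x \in R_q \mid x \leftrightarrow y) \leq \tfrac{1}{q}\P(x \in R_q \mid x \leftrightarrow y)$.

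The most delicate step will be the combinatorial bookkeeping of the iterated application of \cref{lem:graph_extension_single_edge}: at each peel one has to correctly identify which of $v_0, \ldots, v_d$ is forced to be a root in the freshly peeled subtree, and distinguish the event $\{\text{apex at } v_k\}$ (in which the apex subtree $T_k$ contributes the unrestricted $Z_{T_k}$) from $\{x \in R_q,\, x \leftrightarrow y\}$ (in which $v_0 = x$ must also be a root of its own subtree, so that $Z_{T_0}$ is replaced by $Z_{T_0}^{R}$). Once the representation above is in hand, the rest of the proof reduces to the single-vertex bound of \cref{prop:monotone_rooting} together with the elementary fact that non-decreasing functions are closed under non-negative linear combinations.
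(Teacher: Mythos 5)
Your proposal is correct, but it proves the lemma by a genuinely different route than the paper. The paper argues by induction on $d(x,y)$: it peels off the single edge incident to $x$ on the path via \cref{lem:graph_extension_single_edge}, writes $\P(x \in R_q \mid x \leftrightarrow y)$ as $\frac{w(z,x)ab}{w(z,x)b + w(x,z)a}$ with $a=\P^{(T_x)}(x \in R_q)$ and $b=\P^{(T_z)}(z \in R_q \mid z \leftrightarrow y)$, and then differentiates this quotient, invoking \cref{prop:monotone_rooting} for $a$ and the induction hypothesis for $b$. You instead resolve the whole path at once through the apex decomposition, obtaining the closed form $\P(x \in R_q \mid x \leftrightarrow y)=\omega_0\big/\sum_{k=0}^d \omega_k/\pi_k(q)$ with $\pi_k(q)=\P^{(T_k)}(v_k \in R_q)$, and then observe that the upper bound in \cref{prop:monotone_rooting} is exactly the statement that $q/\pi_k(q)$ is non-decreasing, a property preserved under non-negative linear combinations with $q$-independent coefficients $\omega_k/\omega_0$; this removes both the induction and the quotient-rule computation. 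Your apex decomposition and root-counting are correct (each non-apex path vertex is forced to be a root of its hanging subtree, the apex subtree is unrestricted, and the $d$ added edges account for the $q^{-d}$), and the "delicate bookkeeping" you flag can in fact be bypassed entirely by a direct bijection at the level of the non-normalised measure $\NNM$ rather than by iterating \cref{lem:graph_extension_single_edge}, whose hypotheses on events determined by the two sides make the iteration slightly awkward to cite verbatim. Two small points to tidy up: (i) since the lemma allows directed trees, some $\omega_k$ may vanish; if $\omega_0=0$ the conditional probability is identically $0$ and the inequality is trivial, so this degenerate case should be dispatched in a sentence before dividing by $\P(x \in R_q \mid x \leftrightarrow y)$; (ii) the case $x=y$ is the single-term instance of your formula, recovering \cref{prop:monotone_rooting} directly, matching the paper's base case. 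What each approach buys: the paper's recursion mirrors the structure it reuses in the proof of \cref{monotone}, while your closed form is cleaner, gives an explicit expression for the conditional rooting probability on trees that may be of independent interest, and isolates precisely which part of \cref{prop:monotone_rooting} (only the upper bound) is needed.
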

\begin{proof}[Proof of \cref{lem:derivative_bound_conditional_rooting}]
 Let $d$ denote the distance between $x$ and $y$. We will argue inductively on $d$.
 For $d=0$ the statement follows from \cref{prop:monotone_rooting}.
 
 Now assume that $d \geq 1$. Let $z$ denote the vertex adjacent to $x$ with distance $d-1$ to $y$. 
 Note that we possibly have that $z=y$. 
 Since $G$ is a tree, removing the edges between $x$ and $z$ splits the graph into two components $T_x$ and $T_z$,
 where $T_x$ and $T_z$ denote the component containing vertex $x$ and $z$, respectively.
 It then holds by \cref{lem:graph_extension_single_edge} that
 \begin{align*}
  \P(x \in R_q \mid x \leftrightarrow y) &= 
  \frac{w(z,x)\NNM^{(T_x)}(x \in R_q)\NNM^{(T_z)}(z \in R_q, z \leftrightarrow y)}
  {w(z,x)Z_{T_x}(q)\NNM^{(T_z)}(z \in R_q, z \leftrightarrow y)+w(x,z)\NNM^{(T_x)}(x \in R_q)\NNM^{(T_z)}(z \leftrightarrow y)} \\
  &= 
  \frac{w(z,x)\P^{(T_z)}(z \in R_q \mid z \leftrightarrow y)\P^{(T_x)}(x \in R_q)}
  {w(z,x)\P^{(T_z)}(z \in R_q \mid z \leftrightarrow y)+w(x,z)\P^{(T_x)}(x \in R_q)}.
 \end{align*}
It follows by the induction hypothesis and \cref{prop:monotone_rooting} that
\begin{align*}
  &\frac{\frac{d}{dq}\P(x \in R_q \mid x \leftrightarrow y)}{\P(x \in R_q \mid x \leftrightarrow y)} \\
  &\quad = 
  \frac{w(z,x)\P^{(T_x)}(x \in R_q)^2 \tfrac{d}{dq} \P^{(T_z)}(z \in R_q \mid z \leftrightarrow y)
  +w(x,z)\P^{(T_z)}(z \in R_q \mid z \leftrightarrow y)^2 \tfrac{d}{dq} \P^{(T_x)}(x \in R_q)}
  {w(z,x)\P^{(T_z)}(z \in R_q \mid z \leftrightarrow y)\P^{(T_x)}(x \in R_q)^2
  +w(x,z)\P^{(T_z)}(z \in R_q \mid z \leftrightarrow y)^2\P^{(T_x)}(x \in R_q)} \\
  &\quad \leq \tfrac{1}{q}.
 \end{align*}
\end{proof}

\begin{proof}[Proof of \cref{thm:monotone_correlations_trees}]
Let $d=d(x,y)$ denote the distance between $x$ and $y$ in $G$ and let $z$ be the vertex adjacent to $x$ with distance $d-1$ to $y$.
We proceed by induction on $d$.

If $d=1$, then by \cref{lem:hitting_time_expression} we have that
\begin{align*}
 \correlation{q}{x}{y}=\frac{1-\P_x(\tau_y < \tau_q)-\P_y(\tau_x < \tau_q)+\P_x(\tau_y < \tau_q)\P_y(\tau_x < \tau_q)}
 {1-\P_x(\tau_y < \tau_q)\P_y(\tau_x < \tau_q)}.
\end{align*}
Taking the derivative gives us that
\begin{align*}
 \frac{d}{dq}\correlation{q}{x}{y}=
 -\frac{(1-\P_x(\tau_y < \tau_q))^2\tfrac{d}{dq}\P_y(\tau_x < \tau_q)
 +(1-\P_y(\tau_x < \tau_q))^2\tfrac{d}{dq}\P_x(\tau_y < \tau_q)}
 {\paren*{1+\P_x(\tau_y < \tau_q)\P_y(\tau_x < \tau_q)}^2},
\end{align*}
which is non-negative by the upper bound in \cref{lem:derivative_bound_hitting}.

Now assume that $d \geq 2$. We then have that
\begin{align*}
\frac{d}{dq} \correlation{q}{x}{y}
&= \frac{d}{dq} \paren*{\P(x \nleftrightarrow z \mid z \leftrightarrow y)\P(z \leftrightarrow y)+\P(z \nleftrightarrow y)} \\
&= \P(x \nleftrightarrow z \mid z \leftrightarrow y)\tfrac{d}{dq} \P(z \leftrightarrow y) 
+ \P(z \leftrightarrow y) \tfrac{d}{dq} \P(x \nleftrightarrow z \mid z \leftrightarrow y) 
+ \tfrac{d}{dq} \P(z \nleftrightarrow y) \\
&= (1-\P(x \nleftrightarrow z \mid z \leftrightarrow y))\tfrac{d}{dq} \P(z \nleftrightarrow y) 
+ \P(z \leftrightarrow y) \tfrac{d}{dq} \P(x \nleftrightarrow z \mid z \leftrightarrow y).
\end{align*}
By the induction hypothesis, we have that $\tfrac{d}{dq} \P(z \nleftrightarrow y) \geq 0$.
Hence, it remains to show that $\tfrac{d}{dq} \P(x \nleftrightarrow z \mid z \leftrightarrow y) \geq 0$.

Removing the edges between $x$ and $z$ splits $G$ into two connected components.
Let $T_x$ and $T_z$ denote the components containing vertex $x$ and $z$, respectively.
By \cref{lem:graph_extension_single_edge} it then holds that
\begin{align*}
 \P(x \nleftrightarrow z \mid z \leftrightarrow y) &= 
 \frac{Z_{T_x}(q) \NNM^{(T_{z})}(z \leftrightarrow y)}
 {Z_{T_x}(q) \NNM^{(T_{z})}(z \leftrightarrow y) 
 + \tfrac{w(x,z)}{q} \NNM^{(T_{x})}(x \in R_q) \NNM^{(T_{z})}(z \leftrightarrow y)
 + \tfrac{w(z,y)}{q} Z_{T_x}(q) \NNM^{(T_{z})}(z \in R_q, \ z \leftrightarrow y)} \\
 &=  \frac{q}{q + w(x,z) \P^{(T_{x})}(x \in R_q) + w(z,y) \P^{(T_{z})}(z \in R_q \mid z \leftrightarrow y)}.
\end{align*}
Taking the derivative and applying \cref{prop:monotone_rooting,lem:derivative_bound_conditional_rooting} gives us that
\begin{align*}
 \frac{d}{dq}  \P(x \nleftrightarrow z \mid z \leftrightarrow y) &=  \frac{w(x,z) \P^{(T_{x})}(x \in R_q) + w(z,y) \P^{(T_{z})}(z \in R_q \mid z \leftrightarrow y)}{
 {\paren*{q + w(x,z) \P^{(T_{x})}(x \in R_q) + w(z,y) \P^{(T_{z})}(z \in R_q \mid z \leftrightarrow y)}^2}}
\\
&\quad - \frac{qw(x,z) \tfrac{d}{dq} \P^{(T_{x})}(x \in R_q) + qw(z,y) \tfrac{d}{dq} \P^{(T_{z})}(z \in R_q \mid z \leftrightarrow y)}{\paren*{q + w(x,z) \P^{(T_{x})}(x \in R_q) + w(z,y) \P^{(T_{z})}(z \in R_q \mid z \leftrightarrow y)}^2} \geq  0.
\end{align*}
\end{proof}

\subsection{Inclusion-exclusion for pairwise LEP-interaction potential on general trees }
\begin{proof}[Proof of \cref{prop:inex}]
We will prove the statement by induction on $d$. 
First assume that $d=1$. Write $\mathcal{H}=\cbrac{F \in \mathcal{F}_G \c x \nleftrightarrow_{F} y}$ to denote the set of rooted forests not containing an edge between $x$ and $y$. Since $G$ is a tree, removing the edges between $x$ and $y$ yields two connected components $G_x$ and $G_y$
containing vertex $x$ and vertex $y$, respectively.
Note that for the non-normalized measure on $G$ it holds that $\NNM^{(G)}(\Phi_q = F)=\NNM^{(G_x)}_q(\Phi = F[G_x])\NNM^{(G_y)}_q(\Phi = F[G_y])$ for all $F \in \mathcal{H}$,
where $F[G_x]$ and $F[G_x]$ denote the induced subgraphs of $F$ on the vertices of $G_x$ and $G_y$, respectively. 
For all $F_x \in \mathcal{F}_{G_x}$ and $F_y \in \mathcal{F}_{G_y}$ there is exactly one $F \in \mathcal{H}$ with $F[G_x]=F_x$ and $F[G_y]=F_y$, namely the disjoint graph union of $F_x$ and $F_y$.
Hence, it holds that
\begin{align*}
 U^{(G)}_q(x,y) &=  \sum_{F \in \mathcal{H}} \P^{(G)}(\Phi_q = F) = \frac{1}{Z_G(q)} \sum_{F \in \mathcal{H}} \NNM^{(G_x)}(\Phi_q = F[G_x])\NNM^{(G_y)}(\Phi_q = F[G_y]) \\
 &= \frac{1}{Z_G(q)} \sum_{F_x \in \mathcal{F}_{G_x}}\sum_{F_y \in \mathcal{F}_{G_y}} \NNM^{(G_x)}(\Phi_q = F_x)\NNM^{(G_y)}(\Phi_q = F_y) =\frac{Z_{G_x}(q)Z_{G_y}(q)}{Z_G(q)}.
\end{align*}

Now assume that $d > 1$. Let $z=z_{d-1}$ denote the neighbor of $y$ with distance $d-1$ to $x$.
Let $G_{\cbrac{d}}$ denote the graph obtained from $G$ by removing the edges between $y$ and $z$.
Then $G_{\cbrac{d}}$ consists of two components $G_y$ and $G_z$ containing vertex $y$ and $z$ respectively.
It then holds by \cref{lem:spatial_markov_property} and the induction hypothesis that
\begin{align*}
 U^{(G)}_q(x,y) &= U^{(G)}_q(x,z)+U^{(G)}_q(y,z)-\P^{(G)}(x \nleftrightarrow_{\Phi_q} z, \ y \nleftrightarrow_{\Phi_q} z) \\
 &= U^{(G)}_q(x,z)+U^{(G)}_q(y,z)-U^{(G)}_q(y,z)\P^{(G)}(x \nleftrightarrow_{\Phi_q} z \mid y \nleftrightarrow_{\Phi_q} z) \\
 &= U^{(G)}_q(x,z)+U^{(G)}_q(y,z)-U^{(G)}_q(y,z)U^{(G_{\cbrac{d}})}_q(x,z) \\
 &= \frac{1}{Z_G(q)} \paren*{\sum_{k=1}^{d-1}(-1)^{k+1} \sum_{I \in \binom{[d-1]}{k}} \prod_{i=1}^{k+1} Z_{G_{I}^{i}}(q)} 
 + \frac{Z_{G_y}(q)Z_{G_z}(q)}{Z_G(q)} \\
 &\quad-\frac{Z_{G_y}(q)Z_{G_z}(q)}{Z_G(q)}\frac{1}{Z_{G_{\cbrac{d}}}(q)}
 \paren*{\sum_{k=1}^{d-1}(-1)^{k+1} \sum_{I \in \binom{[d-1]}{k}} \prod_{i=1}^{k+2} Z_{G_{I\cup\cbrac{d}}^{i}}(q)} \\
 &= \frac{1}{Z_G(q)} \paren*{\sum_{k=1}^{d-1}(-1)^{k+1} \sum_{I \in \binom{[d-1]}{k}} \prod_{i=1}^{k+1} Z_{G_{I}^{i}}(q)} 
 + \frac{Z_{G_y}(q)Z_{G_z}(q)}{Z_G(q)} \\
 &\quad+\frac{1}{Z_G(q)}
 \paren*{\sum_{k=1}^{d-1}(-1)^{k} \sum_{I \in \binom{[d-1]}{k}} \prod_{i=1}^{k+2} Z_{G_{I\cup\cbrac{d}}^{i}}(q)} \\
 &= \frac{1}{Z_G(q)} \paren*{\sum_{k=1}^{d-1}(-1)^{k+1} \sum_{I \in \binom{[d-1]}{k}} \prod_{i=1}^{k+1} Z_{G_{I}^{i}}(q)} 
 +\frac{1}{Z_G(q)}
 \paren*{\sum_{k=0}^{d-1}(-1)^{k} \sum_{I \in \binom{[d-1]}{k}} \prod_{i=1}^{k+2} Z_{G_{I\cup\cbrac{d}}^{i}}(q)} \\
 &= \frac{1}{Z_G(q)} \paren*{\sum_{k=1}^{d}(-1)^{k+1} \sum_{I \in \binom{[d]}{k}} \prod_{i=1}^{k+1} Z_{G_{I}^{i}}(q)}.
\end{align*}
\end{proof}

%%%%%%%%%%%%%%%%%%%%%%%%%%%%%% PROOFS INTEGERS

\subsection{Partition function on segments and rings}

\begin{proof}[Proof of \cref{thm:path_partition}]
\leavevmode \\
\fbox{\cref{eq:path_partition_combinatorial}}
Let $b$ be a boundary vertex of $PG_n$.
Let $\NNM^{(n)}$ denote the non-normalized measure on $PG_n$.
By \cref{lem:graph_extension_single_edge} we have that
\begin{equation}\label{eq:path_rooting_extension}
 \NNM^{(n)}(b \notin R_q) = Z_{n-1}(q), \text{ and } \ \NNM^{(n)}(b \in R_q) = \NNM^{(n-1)}(b \in R_q)+qZ_{n-1}(q).
\end{equation}
This gives us that
\begin{align}\notag
 Z_{n}(q) &= \NNM^{(n)}(b \in R_q) + \NNM^{(n)}(b \notin R_q)= \NNM^{(n-1)}(b \in R_q)+(q+1)Z_{n-1}(q)\\& = (q+2)Z_{n-1}(q)-\NNM^{(n-1)}(b \notin R_q) 
 = (q+2)Z_{n-1}(q)-Z_{n-2}(q). \label{eq:path_laplacian_recurrence}
\end{align}

We will prove \cref{eq:path_partition_combinatorial} by induction on $n$.
Note that for $n=1$ we have $Z_1(q)=q$ and for $n=2$ we have $Z_2(q)=q^2+2q$, so in both these cases \cref{eq:path_partition_combinatorial} holds. Now assume that $n>2$. 
Then by \cref{eq:path_laplacian_recurrence}, the induction hypothesis and repeated applications of Pascal's formula we have that
\begin{align*}
 Z_n(q) &= (2+q)Z_{n-1}(q)-Z_{n-2}(q)
 = (2+q)\sum_{k=1}^{n-1}\binom{n+k-2}{2k-1}q^k-\sum_{k=1}^{n-2}\binom{n+k-3}{2k-1}q^k 
 =\sum_{k=1}^{n}\binom{n+k-1}{2k-1}q^k.
\end{align*}

\noindent\fbox{\cref{eq:path_partition_spectral}}
Let $L$ denote the graph Laplacian of $PG_n$, since due to~\eqref{Z} the partition function is the characteristic polynomial of $L$, it  can be directly obtained from its spectrum, 
which is given in \cite{vanmieghem2010spectra}, from which: 
\begin{equation*}
 Z_n(q) = \prod_{k=1}^{n}\paren*{q+2-2\cos\paren*{\tfrac{\pi(n-k)}{n}}}.
\end{equation*}

\noindent\fbox{\cref{eq:path_partition_recurrence}}
We have shown above that the partition function satisfies the recurrence relation in \cref{eq:path_laplacian_recurrence} .
Using the initial conditions $Z_1(q)=q$ and $Z_2(q)=q^2+2q$, this linear recurrence relation has solution
\begin{equation*}
 Z_n(q) = \frac{q\paren*{q+2+\sqrt{q^2+4q}}^{n}-q\paren*{q+2-\sqrt{q^2+4q}}^{n}}{2^{n}\sqrt{q^2+4q}}.
\end{equation*}

\noindent\fbox{\cref{eq:path_partition_chebyshev}}
To verify that the three expressions above do indeed coincide, we can use Chebyshev polynomials of the second kind and find that 
\begin{equation*}
 Z_n(q)= qU_{n-1}(\tfrac{q}{2}+1).
\end{equation*}
\end{proof}

We next move to the proof of \cref{prop:cycle_path_partition}, for which we will first need to expresses in the next lemma the probability of a boundary point in the path-graph being a root in terms of differences of the partition function. 

\begin{lemma}[Rooting events in path-graphs]\label{lem:path_root_measure}
Let $PG_n$ be the path-graph on $n$ vertices and $Z_n(q)$ its partition function.
Let $x \in V$ be a vertex with distance $d \in \N_0$ from the boundary and $b \in V$ a boundary vertex.
Let $\NNM^{(n)}$ denote the non-normalized measure on $PG_n$ and $R_q$ the set of roots of $\Phi_q$. Then
\begin{equation}\label{eq:splitting_rooting_events}
 \NNM^{(n)}(x \in R_q) = \tfrac{1}{q}\NNM^{(d+1)}(b \in R_q) \ \NNM^{(n-d)}(b \in R_q),
\end{equation}
with 
\begin{equation}\label{eq:rooting_recurrence}
\NNM^{(n)}(b \in R_q)=Z_{n}(q)-Z_{n-1}(q).
\end{equation}
For the non-normalized measure of the event that both boundary vertices $b$ and $b'$ are roots it holds that
 \begin{equation}\label{eq:path_double_boundary_roots}
  \NNM^{(n)}(b,b' \in R_q) = q Z_{n-1}(q).
 \end{equation}
\end{lemma}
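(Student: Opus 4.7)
The plan is to prove the three identities in order, each leveraging the recursive structure of path graphs and the contraction/extension machinery of \cref{lem:edge_probability,lem:graph_extension_single_vertex,lem:spatial_markov_property}.

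For \eqref{eq:rooting_recurrence}, I would start from the observation that a boundary vertex $b$ of $PG_n$ has a unique neighbour $b^{*}$, so the event $\{b \notin R_q\}$ coincides with $\{(b, b^{*}) \in \Phi_q\}$. Rooted forests containing this edge are in bijection with rooted forests on the induced subgraph $PG_{n-1}$ on $V \setminus \{b\}$: the bijection $F \mapsto F \setminus \{(b, b^{*})\}$ preserves the weight and, since adjoining the pendant edge $(b, b^{*})$ does not change the root of the tree containing $b^{*}$, it also preserves the number of roots. Hence $\NNM^{(n)}(b \notin R_q) = Z_{n-1}(q)$ (this is the same identity used in the proof of \cref{thm:path_partition}), and \eqref{eq:rooting_recurrence} follows by subtraction from $Z_n(q)$.

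For \eqref{eq:splitting_rooting_events}, I would exploit that a root carries no outgoing edges. Conditional on $\{x \in R_q\}$, every edge of $\Phi_q$ lies either entirely in the left segment $\{1, \ldots, x\}$ (a path on $d+1$ vertices with $x$ as one boundary) or entirely in the right segment $\{x, \ldots, n\}$ (a path on $n-d$ vertices with $x$ as one boundary): any edge incident to $x$ must be incoming, hence belongs to exactly one of the two segments. The induced restrictions $F_L$ and $F_R$ are then rooted forests of the two sub-paths, each having $x$ as a root. Since $x$ is counted as a root in both sub-forests but only once in $F$, we have $r(F) = r(F_L) + r(F_R) - 1$ and $w(F) = w(F_L) w(F_R)$, so that $q^{r(F)} w(F) = q^{-1}\, q^{r(F_L)} w(F_L)\, q^{r(F_R)} w(F_R)$. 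Summing over all compatible pairs $(F_L, F_R)$ produces exactly \eqref{eq:splitting_rooting_events}.

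For \eqref{eq:path_double_boundary_roots}, I would argue by induction on $n$. The base case $n = 2$ is immediate by enumeration: the unique forest with both vertices as roots is the empty-edge forest, contributing $q^{2} = qZ_1(q)$. For the induction step, I would apply \cref{lem:graph_extension_single_vertex} to the extension $PG_{n-1} \subseteq PG_n$ obtained by appending the boundary vertex $b=1$, whose only neighbour is vertex $2$. Since $n \in R(F)$ iff $n \in R(F_{n-1})$ (edges not incident to $1$ are unchanged), this yields
\begin{equation*}
\NNM^{(n)}(1, n \in R_q) = q\, \NNM^{(n-1)}(n \in R_q) + \NNM^{(n-1)}(2, n \in R_q),
\end{equation*}
where the first term corresponds to leaving $b=1$ as an isolated root and the second to adjoining the edge $(2, 1)$, which is admissible exactly when $2 \in R(F_{n-1})$. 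In $PG_{n-1}$ on $\{2, \ldots, n\}$ the vertices $2$ and $n$ are the two boundaries, so the induction hypothesis gives $\NNM^{(n-1)}(2, n \in R_q) = q Z_{n-2}(q)$, while \eqref{eq:rooting_recurrence} gives $\NNM^{(n-1)}(n \in R_q) = Z_{n-1}(q) - Z_{n-2}(q)$. Substituting collapses the right-hand side to $qZ_{n-1}(q)$, completing the induction.

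The main obstacle I foresee is the decomposition argument for \eqref{eq:splitting_rooting_events}: one must carefully identify that, conditional on $x$ being a root, $x$ is automatically a root in both the left and right restricted forests, and that this single vertex of overlap produces precisely the $1/q$ factor. Once this independence-at-a-root principle is pinned down, identities \eqref{eq:rooting_recurrence} and \eqref{eq:path_double_boundary_roots} reduce to routine contraction and extension computations.
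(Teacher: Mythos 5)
Your proposal is correct, and it diverges from the paper's proof most notably on \eqref{eq:splitting_rooting_events}. The paper proves that identity spectrally: it invokes the matrix--forest relation $\NNM^{(n)}(x \in R_q) = q\,\det[qI-L_n]_x$ and observes that deleting the row and column of $x$ leaves a block-diagonal matrix, so $\det[qI-L_n]_x = \det[qI-L_{d+1}]_b\,\det[qI-L_{n-d}]_b$; your argument instead splits each forest with $x \in R_q$ at the root $x$ into the two restrictions $F_L$, $F_R$, uses $w(F)=w(F_L)w(F_R)$ and $r(F)=r(F_L)+r(F_R)-1$ (the overlap at $x$ giving the $1/q$), and sums — a purely combinatorial, self-contained route that avoids the determinantal identity and in fact works verbatim at any cut vertex of any graph, which is a nice bonus; the paper's version is shorter once the matrix-forest machinery is taken for granted. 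For \eqref{eq:rooting_recurrence} the two arguments are essentially the same: your pendant-edge bijection $F \mapsto F \setminus \{(b,b^{*})\}$ is exactly the content of \eqref{eq:path_rooting_extension}, which the paper obtains from \cref{lem:graph_extension_single_edge} in the proof of \cref{thm:path_partition}. For \eqref{eq:path_double_boundary_roots} your recursion $\NNM^{(n)}(1,n \in R_q) = q\,\NNM^{(n-1)}(n \in R_q) + \NNM^{(n-1)}(2,n \in R_q)$ (derived via \cref{lem:graph_extension_single_vertex}, where the weight $w(r,1)$ vanishes except at $r=2$) coincides with the paper's first identity from \cref{lem:graph_extension_single_edge}; the only difference is that you close the recursion by induction on $n$, whereas the paper unrolls it once more using $\NNM^{(n)}(b \in R_q,\ b' \notin R_q)=\NNM^{(n-1)}(b\in R_q)$ together with the recurrence \eqref{eq:path_laplacian_recurrence}, and both routes land on $qZ_{n-1}(q)$ after the same cancellation. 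All the delicate points you flag (that $x$ is automatically a root of both restrictions, that the glued pair is again a rooted forest, that vertex $n$'s root status is determined by the restriction to $\{2,\dots,n\}$) do hold, so the argument is complete.
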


\begin{proof}[Proof of \cref{lem:path_root_measure}]
\leavevmode \\
\fbox{\cref{eq:splitting_rooting_events}}
Let $L_n$ denote the graph Laplacian of the path-graph on $n$ vertices.
Inspection of the Laplacian and using the symmetry of the path-graph shows that 
\begin{equation*}
 \det[qI-L_n]_x = \det[qI-L_{d+1}]_b \det[qI-L_{n-d}]_b,
\end{equation*}
as removing a row and column from $qI-L_n$ results in a matrix comprised of two blocks.
Since the event that vertex $x$ is a root equals the event that none of the outgoing edges of $x$ are present,
it holds by \cref{lem:spatial_markov_property} that $\NNM^{(n)}(x \in R_q)=q \ \det[qI-L_n]_x$, 
from which \cref{eq:splitting_rooting_events} follows.

\noindent\fbox{\cref{eq:rooting_recurrence}} 
Since $\NNM^{(n)}(b \in R_q)=Z_{n}(q)-\NNM^{(n)}(b \notin R_q)$, \cref{eq:rooting_recurrence} follows directly from \cref{eq:path_rooting_extension}.

\noindent\fbox{\cref{eq:path_double_boundary_roots}}
By \cref{lem:graph_extension_single_edge} we have that
\begin{equation*}
 \NNM^{(n)}(b,b' \in R_q) = q\NNM^{(n-1)}(b \in R_q) + \NNM^{(n-1)}(b,b' \in R_q), \text{ and } \ 
 \NNM^{(n)}(b \in R_q, \ b' \notin R_q) = \NNM^{(n-1)}(b \in R_q).
\end{equation*}
Since $\NNM^{(n-1)}(b,b' \in R_q)=\NNM^{(n-1)}(b \in R_q)-\NNM^{(n-1)}(b \in R_q, \ b' \notin R_q)$, 
it follows from \cref{eq:path_laplacian_recurrence,eq:rooting_recurrence} that 
\begin{align*}
 \NNM^{(n)}(b,b' \in R_q) &= (q+1)\NNM^{(n-1)}(b \in R_q) - \NNM^{(n-2)}(b \in R_q) \\
 &= (q+1)Z_{n-1}(q) - (q+2)Z_{n-2}(q) + Z_{n-3}(q) 
 = qZ_{n-1}(q).
\end{align*}
\end{proof}

\begin{proof}[Proof of \cref{prop:cycle_path_partition}]
We will first prove \cref{eq:cycle_partition_path}.
Let $V$ denote the vertex set of $CG_n$ and let $x \in V$ be a vertex.
The partition function can be split into two terms
\begin{align}
 Z_{CG_n}(q) &= \NNM^{(CG_n)}(x \in R_q) + \NNM^{(CG_n)}(x \notin R_q).
\end{align}

Note that the induced subgraph $CG_n[V\setminus\cbrac{x}]$ obtained by removing vertex $x$, is a path-graph on $n-1$ vertices.
Let $y$ and $z$ denote the two vertices adjacent to $x$ in $CG_n$. So, these are the boundary vertices of $PG_{n-1}$. 
We will use \cref{lem:graph_extension_single_vertex}.
This gives us by \cref{eq:path_laplacian_recurrence,lem:path_root_measure} that 
\begin{align*}
 \NNM^{(CG_n)}(x \in R_q) &= 
 \sum_{F \in \mathcal{F}_{PG_{n-1}}} q \ \NNM^{(PG_{n-1})}(\Phi=F) \ (1+\tfrac{1}{q})^{\abs{R(F) \cap \cbrac{y,z}}} \\
 &=(q+2+\tfrac{1}{q})\NNM^{(PG_{n-1})}(y,z \in R_q) + 2(q+1)\NNM^{(PG_{n-1})}(y \in R_q, z \notin R_q) 
 + q \NNM^{(PG_{n-1})}(y,z \notin R_q) \\
 &=qZ_{PG_{n-1}}(q)+(2+\tfrac{1}{q})\NNM^{(PG_{n-1})}(y,z \in R_q) + 2\NNM^{(PG_{n-1})}(y \in R_q, z \notin R_q) \\
 &=(q+2)Z_{PG_{n-1}}(q) - 2 Z_{PG_{n-2}}(q) + \tfrac{1}{q}\NNM^{(PG_{n-1})}(y,z \in R_q) \\
 &=Z_{PG_{n}}(q) -  Z_{PG_{n-2}}(q) + \tfrac{1}{q}\NNM^{(PG_{n-1})}(y,z \in R_q) = Z_{PG_{n}}(q).
\end{align*}

Let $r_y(F)$ denote the root in the tree of forest $F$ that contains vertex $y$. 
Again using \cref{lem:graph_extension_single_vertex} and \cref{eq:path_laplacian_recurrence}, we obtain 
 \begin{align*}
  \NNM^{(CG_n)}&(x \notin R_q) = \sum_{F \in \mathcal{F}_{PG_{n-1}}} \NNM^{(PG_{n-1})}(\Phi=F) \ 
     2 (1+\tfrac{1}{q})^{\mathbf{1}\cbrac{z \in R(F), \ r_y(F) \neq z}} \\
  &= 2\NNM^{(PG_{n-1})}(z \notin R_q) + 2 (1+\tfrac{1}{q})\NNM^{(PG_{n-1})}(z \in R_q, r_y(F) \neq z) = (2+\tfrac{2}{q})Z_{PG_{n-1}}(q)-\tfrac{2}{q}Z_{PG_{n-2}}(q)-2 \\
  &= \tfrac{2}{q}((q+2)Z_{PG_{n-1}}(q)-Z_{PG_{n-2}}(q)-Z_{PG_{n-1}}(q))-2 = \tfrac{2}{q}(Z_{PG_{n}}(q)-Z_{PG_{n-1}}(q))-2.
 \end{align*}
This proves \cref{eq:cycle_partition_path}.

\Cref{eq:cycle_partition_combinatorial} follows from \cref{eq:cycle_partition_path} 
and the expression for the path-graph partition function given in \cref{eq:path_partition_combinatorial}, by repeated applications of Pascal's formula.
\end{proof}

\subsection{Asymptotic analysis of path-graphs}
\begin{proof}[Proof of \cref{pathLEP}]
\leavevmode 
\fbox{\emph{\cref{eq:path_correlation}}}
Let $\mathcal{F}_n$ denote the set of rooted forests of $PG_n$ and write
\begin{align*}
 \mathcal{F}_{n-d}^{k} &= \cbrac{F \in \mathcal{F}_{n-d} \c r(F) = k}; \\
 \mathcal{R}_{n-d}^{k}(x) &= \cbrac{F \in \mathcal{F}_{n-d} \c r(F) = k, \ x \in R(F)}; \\
 \mathcal{C}_{n}^{k}(x,y) &= \cbrac{F \in \mathcal{F}_{n} \c r(F) = k, \ x \leftrightarrow_F y}.
\end{align*}
It is sufficient to show that for all $k \in [n-d]$ it holds that
$|\mathcal{C}_{n}^{k}(x,y)| = |\mathcal{F}_{n-d}^{k}| + d|\mathcal{R}_{n-d}^{k}(x)|$.
The result then follows from \cref{lem:path_root_measure}.

We will construct a bijection between the set $\mathcal{C}_{n}^{k}(x,y)$ and the set 
$\mathcal{F}_{n-d}^{k} \cup (\mathcal{R}_{n-d}^{k}(x) \times [d])$.
Let $F \in \mathcal{C}_{n}^{k}(x,y)$ be given. Let $r \in [n]$ denote the vertex of $F$ 
that is the root in the component of $x$ and $y$. Let $B=[y]\setminus[x-1]$ denote the set of all vertices from $x$ to $y$
and let $F_B \in \mathcal{F}_{n-d}^{k}$ denote the $B$-vertex contraction of $F$.
Then we have that $F_B \in \mathcal{R}_{n-d}^{k}(x)$ if and only if $r \in [y]\setminus[x-1]$.
Define the function $f:\mathcal{C}_{n}^{k}(x,y) \to \mathcal{F}_{n-d}^{k} \cup (\mathcal{R}_{n-d}^{k}(x) \times [d])$ by
\begin{equation*}
 f(F) =
 \begin{cases}
  F_B & \text{ if } r \notin [y]\setminus[x] \\
  (F_B, r-x) & \text{ if } r \in [y]\setminus[x].
 \end{cases}
\end{equation*}
It is easily verified that this gives a bijection.

\leavevmode \\
\fbox{\emph{Lower bound}}
Let $\tilde{\P}_x$ denote the law of the discrete-time random walk $\tilde{X}$ on $PG_n$ starting on $x$, as defined in \cref{eq:discrete_time_rw}.
Since in this case we consider a path-graph, we have that $\tilde{\tau}_{q} \sim \mathrm{Geom}(\tfrac{q}{q+2})$.

We will analyze the expression in \cref{LEdec}.
Let $z$ denote a vertex halfway between $x$ and $y$. 
For notational simplicity we assume that $d$ is even, so that $z=x+\tfrac{d}{2}$.
The argument in the case where $d$ is odd is similar.
Note that the vertices $x$ and $y$ are disconnected in $\Phi_q$ 
if both the random walks starting at $x$ and the random walk starting at $y$ are killed before reaching vertex $z$.
So, we have that
\begin{align}
\P(x \nleftrightarrow_{\Phi_q} y) & \geq \tilde{\P}_x(\tilde{\tau}_{q} \leq \tau_{z})\tilde{\P}_y(\tilde{\tau}_{q} \leq \tau_{z}).
\end{align}

Let $\tau_{S}(k)$ denote the hitting time of $k \in \Z$ by $S$.
A coupling of $\tilde{X}$ and $S$ can be used to show that
\begin{align}
 \tau_{z} \stackrel{d}{=}\min\cbrac{\tau_{S}\paren*{\tfrac{d}{2}}, \tau_{S}\paren*{1-2x-\tfrac{d}{2}}},
\end{align}
where $\stackrel{d}{=}$ denotes equality in distribution.

By the reflection principle it holds for all $k,n \in \N$ that $\P(\tau_{S}(n) \leq k)=\P(S_k \notin [-n,n-1])$.
For $u \in \cbrac{x,y}$ it follows that
\begin{align*}
 \tilde{\P}_u(\tilde{\tau}_{q} \leq \tau_z) 
 &=  \sum_{k=1}^{\infty}\P(\tilde{\tau}_{q} = k)\tilde{\P}_u(\tau_{z} \geq k) 
\geq \sum_{k=1}^{m}\paren*{1-\tilde{\P}_u(\tau_{z} < k)}\P(\tilde{\tau}_{q} = k)\\
 &=\sum_{k=1}^{m}\paren*{1-\P(\tau_S(\tfrac{d}{2}) < k \text{ or } 
 \tau_S(1-2x-\tfrac{d}{2}) < k)}\P(\tilde{\tau}_{q} = k) \\
 &\geq\sum_{k=1}^{m}\paren*{1-2\P(\tau_S(\tfrac{d}{2}) < k)}\P(\tilde{\tau}_{q} = k) 
 =\sum_{k=1}^{m}\paren*{2\P(S_{k-1} \in [-\tfrac{d}{2},\tfrac{d}{2}-1])-1}
 \P(\tilde{\tau}_{q} = k)\\
 & \geq \sum_{k=1}^{m}\paren*{2\P(\abs{S_{k-1}} < \tfrac{d}{2})-1}\P(\tilde{\tau}_{q} = k)
 \geq \sum_{k=1}^{m}\paren*{2\P(\abs{S_{m}} < \tfrac{d}{2})-1}\P(\tilde{\tau}_{q} = k) \\
&=\paren*{2\P\paren*{\abs{S_{m}} < \tfrac{d}{2}}-1}\P(\tilde{\tau}_{q} \leq m) 
=\paren*{2\P\paren*{\abs{S_{m}} < \tfrac{d}{2}}-1}\paren*{1-\paren*{1-\tfrac{q}{2+q}}^{m}}
\end{align*}
Hence
\begin{equation*}
\min_{u\in\{x,y\}}\tilde{\P}_u(\tilde{\tau}_{q} \leq \tau_{z}) \geq \paren*{2\P\paren*{\abs{S_{m}} < \tfrac{d}{2}}-1}\paren*{1-\paren*{1-\tfrac{q}{2+q}}^{m}}.
\end{equation*}
If $\paren*{2\P\paren*{\abs{S_{m}} < \tfrac{d}{2}}-1}$ is non-negative, then we also have that
\begin{equation*}
\tilde{\P}_x(\tilde{\tau}_{q} \leq \tau_{z})\tilde{\P}_y(\tilde{\tau}_{q} \leq \tau_{z}) \geq 
\paren*{2\P\paren*{\abs{S_{m}} < \tfrac{d}{2}}-1}^2\paren*{1-\paren*{1-\tfrac{q}{2+q}}^{m}}^2.
\end{equation*}
Therefore, we have for all $m \in \N$ with $\P\paren*{\abs{S_{m}} < \frac{d}{2}} \geq \tfrac{1}{2}$ that 
\begin{align*}
 U_{q}^{(n)}(x,y) &\geq \paren*{2\P\paren*{\abs{S_{m}} < \tfrac{d}{2}}-1}^2\paren*{1-\paren*{1-\tfrac{q}{2+q}}^{m}}^2,
\end{align*}
which gives the desired lower bound.

\leavevmode \\
\fbox{\emph{Upper bound}}
We again analyze by means of Wilson's algorithm with the first random walk starting at $x$ and the second one starting at $y$.
Note that the trajectory of the loop-erasure of the first random walk will always contain its starting vertex $x$.
Thus if the second random walk hits $x$ before being killed, then $x$ and $y$ are connected in $\Phi_q$.
Therefore, we have that
\begin{align*}
 \P(x \leftrightarrow_{\Phi_{q}} y) \geq \tilde{\P}_y(\tau_{x} < \tilde{\tau}_{q}).
\end{align*}

Using a coupling argument we can show that 
\begin{align*}
 \tau_{x}&\stackrel{d}{=} \min\cbrac{\tau_{S}(-d), \tau_{S}(2n+d-2y+1)},
\end{align*}
where $\tau_x$ denotes the first hitting time vertex $x$ by the random walk $\tilde{X}$ starting at $y$.
So, in a manner similar to that used for the lower bound, we find for all $m \in \N$ that
\begin{align*}
 \tilde{\P}_{y}(\tau_{x} < \tilde{\tau}_{q} ) &= \sum_{k=1}^{\infty}\tilde{\P}_{y}(\tau_{x} < k) \P(\tilde{\tau}_{q} = k) 
 \geq \sum_{k=m}^{\infty}\tilde{\P}_{y}(\tau_{x} \leq k) \P(\tilde{\tau}_{q} = k + 1) \\
 &= \sum_{k=m}^{\infty}\P(\tau_{S}(-d) \leq k \text{ or } \tau_{S}\paren*{2n+d-2y+1} \leq k) \P(\tilde{\tau}_{q} = k + 1) \\
 &\geq \sum_{k=m}^{\infty}\P(\tau_{S}(-d) \leq k) \P(\tilde{\tau}_{q} = k + 1) \geq \sum_{k=m}^{\infty}\P(\tau_{S}(d) \leq a) \P(\tilde{\tau}_{q} = k + 1) \\
 &= \P(\tau_{S}(d) \leq m) \P(\tilde{\tau}_{q} > m) 
 = \P(S_{m} \notin [-d,d-1]) \P(\tilde{\tau}_{q} > m) \\
 &\geq \P(\abs{S_{m}} > d) \P(\tilde{\tau}_{q} > m) = \P\paren*{\abs{S_{m}} > d} \paren*{1-\tfrac{q}{2+q}}^{m}.
\end{align*}
It follows that 
\begin{align*}
 U_{q}^{(n)}(x,y) &= 1- \P(x \leftrightarrow_{\Phi_q} y) 
 \leq 1- \P\paren*{\abs{S_{m}} > d} \paren*{1-\tfrac{q}{2+q}}^{m}.
\end{align*}
\end{proof}

\begin{proof}[Proof of \cref{thm:path_correct_scaling}]
\leavevmode \\
\fbox{\emph{$q_n = \smallO(\tfrac{1}{d_n^2})$}}
Set $m_n=\ceil{\frac{d_n}{\sqrt{q_n}}}$, i.e. $m_n$ is the smallest integer that is not smaller than $\frac{d_n}{\sqrt{q_n}}$.
We have that $m_n=\smallOmega(d_n^2)$.
In particular this means that $m_n \to \infty$ as $n \to \infty$.
So, $\frac{S_{m_n}}{\sqrt{m_n}}$ converges in distribution to a standard normal random variable.
Since $\tfrac{d_n}{\sqrt{m_n}} \to 0$, it follows that $\P\paren*{\tfrac{\abs{S_{m_n}}}{\sqrt{m_n}} > \tfrac{d_n}{\sqrt{m_n}}} \to 1$.
We also have that $m_n = \smallO(\tfrac{1}{q_n})$, which gives us that $\paren*{1-\frac{q_n}{2+q_n}}^{m_n} \to 1$.
Therefore, the upper bound from \cref{pathLEP} gives us that
\begin{align*}
 U_{q_n}^{(n)}(x_n,y_n) &\leq 
 1-\P\paren*{\tfrac{\abs{S_{m_n}}}{\sqrt{m_n}} > \tfrac{d_n}{\sqrt{m_n}}}\paren*{1-\frac{q_n}{2+q_n}}^{m_n} 
=\smallO(1).
\end{align*}
\leavevmode \\
\fbox{\emph{$q_n = \smallOmega(\tfrac{1}{d_n^2})$}}
Again set $m_n=\ceil{\frac{d_n}{\sqrt{q_n}}}$.
It holds that $m_n = \smallOmega(\tfrac{1}{q_n})$, so that $\paren*{1-\frac{q_n}{2+q_n}}^{m_n} \to 0$.
Furthermore, we have that $m_n=\smallO(d_n^2)$. This means that $\tfrac{d_n}{2\sqrt{m_n}} \to \infty$ and thus that 
$\P\paren*{\tfrac{\abs{S_{m_n}}}{\sqrt{m_n}} < \tfrac{d_n}{2\sqrt{m_n}}} \to 1$. 
For large enough $n$, this gives us that $\P\paren*{\tfrac{\abs{S_{m_n}}}{\sqrt{m_n}} < \tfrac{d_n}{2\sqrt{m_n}}} \geq \tfrac{1}{2}$,
which means that we can apply the lower bound from \cref{pathLEP}.
This gives us that
\begin{align*}
 U_{q_n}^{(n)}(x_n,y_n) &\geq 
 \paren*{1-\paren*{1-\frac{q_n}{2+q_n}}^{m_n}}^2\paren*{2\P\paren*{\frac{\abs{S_{m_n}}}{\sqrt{m_n}} < \frac{d_n}{2\sqrt{m_n}}}-1}^2 
  = 1- \smallO(1).
\end{align*}
\leavevmode \\
\fbox{\emph{$q_n = \tfrac{c}{d_n^2}+\smallO(\tfrac{1}{d_n^2})$}}
Now set $m_n=\ceil{\frac{d_n}{4\sqrt{c q_n}}}$.
We will distinguish between the case where $d_n$ diverges and the case where $d_n$ is bounded.

First assume that $d_n=\smallOmega(1)$.
Then we find that $m_n \sim \frac{1}{4q_n}$.
It follows that there exists a $\varepsilon > 0$ small enough that 
$\varepsilon<\paren*{1-\frac{q_n}{2+q_n}}^{m_n}<1-\varepsilon$ for all $n \in \N$.
We also have that $m_n \sim \tfrac{1}{4}d_n^2$. This gives us that
$\frac{S_{m_n}}{\sqrt{m_n}}$ converges in distribution to a standard normal random variable $Z$
and that $\frac{d_n}{2\sqrt{m_n}}\to 1$. Since $0.6<\P(\abs{Z}<1) < 0.7$, we can apply the lower bound from \cref{pathLEP}.
Using both bounds from \cref{pathLEP}, we conclude the non-degeneracy:
\begin{equation*}
 \lim_{n \to \infty}U_{q_n}^{(n)}(x_n,y_n) \in (0,1).
\end{equation*}

Now instead assume that $d_n$ is bounded, i.e. there exists an $M \in \N$ with $M \geq d_n$ for all $n \in \N$.
Then the lower bound from \cref{pathLEP} can not necessarily be applied.
However, we can lower bound the probability that $x_n$ and $y_n$ are disconnected by the probability that 
the discrete-time random walks on $PG_n$ starting at $x$ and $y$ are both killed at time $1$, while still at their starting points.
This probability equals $\P(\tilde{\tau}_{q}=1)^2=\frac{q_n^2}{(2+q_n)^2}$.

The probability that $x_n$ and $y_n$ are connected can be lower bounded by the probability that the discrete-time random walk on $PG_n$ 
starting at $x$ jumps $M$ times in the direction of $y$ and then is then killed at time $M+1$.
This probability equals $\paren*{\tfrac{1}{2+q_n}}^M \tfrac{q_n}{2+q_n}$.
So, we have for all $n \in \N$ that
\begin{align*}
 \frac{q_n^2}{(2+q_n)^2} \leq U_{q_n}^{(n)}(x_n,y_n) \leq 1-\paren*{\tfrac{1}{2+q_n}}^M \tfrac{q_n}{2+q_n}.
\end{align*}
Since $q_n \sim \tfrac{c}{d_n^2}$ and $d_n$ is bounded, we have that $q_n$ is bounded away from $0$ and away from infinity. 
Hence, the 2-point correlation is also non-degenerate in this case.
\end{proof}

\begin{proof}[Proof of \cref{eq:path_asymptotics_root_distance}]
We start the proof with three technical limits.
Let $\alpha \in \R$ be a constant. 
We claim that
 \begin{equation}\label{eq:path_root_asymptotics_part1}
  \lim_{n \to \infty} \paren*{\frac{2}{q_n + 2 + \sqrt{q_n^2 + 4 q_n}}}^{\alpha\sqrt{n}+\smallO(\sqrt{n})}= e^{-\tfrac{\alpha}{2\delta}},
 \end{equation} 
 \begin{equation}\label{eq:path_root_asymptotics_part2}
  \lim_{n \to \infty} \paren*{\frac{q_n + 2 - \sqrt{q_n^2 + 4 q_n}}{q_n + 2 + \sqrt{q_n^2 + 4 q_n}}}^{\alpha\sqrt{n}+\smallO(\sqrt{n})}= e^{-\frac{\alpha}{\delta}}
 \end{equation}
 and
 \begin{equation}\label{eq:path_root_asymptotics_part3}
  \lim_{n \to \infty} \paren*{\frac{q_n + 2 - \sqrt{q_n^2 + 4 q_n}}{q_n + 2 + \sqrt{q_n^2 + 4 q_n}}}^{\smallOmega(\sqrt{n})}=0.
 \end{equation}
Each of the three identities will be proven separately. 
\leavevmode \\
\fbox{\cref{eq:path_root_asymptotics_part1}}
 Since $\tfrac{\sqrt{1+4d_n^2}}{2d_n^2}- \tfrac{1}{2\delta \sqrt{n}}=\smallO(\tfrac{1}{\sqrt{n}})$, 
 we have that
 \begin{align*}
  \hspace{-3 em}\paren*{\frac{q_n+2+\sqrt{q_n^2+4q_n}}{2}}^{\alpha\sqrt{n}+\smallO(\sqrt{n})} 
  &= \paren*{1+\frac{\sqrt{1+4d_n^2}}{2d_n^2}+\frac{1}{2d_n^2}}^{\alpha\sqrt{n}+\smallO(\sqrt{n})} \\
  &= \paren*{1+\frac{1}{2\delta \sqrt{n}}+\smallO(\tfrac{1}{\sqrt{n}})}^{\alpha \sqrt{n}+\smallO(\sqrt{n})} 
  = e^{\frac{\alpha}{2\delta}}+\smallO(1).   
 \end{align*}

\leavevmode \\
\fbox{\cref{eq:path_root_asymptotics_part2}}
Note that $\sqrt{q_n^2+4q_n}= \tfrac{1}{\delta \sqrt{n}}(1+\smallO(1))$. 
Hence,
 \begin{align*}
\hspace{-3 em}\paren*{\frac{q_n+2-\sqrt{q_n^2+4q_n}}{q_n+2+\sqrt{q_n^2+4q_n}}}^{\alpha\sqrt{n}+\smallO(\sqrt{n})}  
&=\paren*{1-\frac{2\sqrt{q_n^2+4q_n}}{q_n+2+\sqrt{q_n^2+4q_n}}}^{\alpha\sqrt{n}+\smallO(\sqrt{n})}  
=\paren*{1-\frac{\sqrt{q_n^2+4q_n}}{1+\smallO(1)}}^{\alpha\sqrt{n}+\smallO(\sqrt{n})} \\
&=\paren*{1-\frac{1}{\delta \sqrt{n}}(1+\smallO(1))}^{\alpha \sqrt{n}+\smallO(\sqrt{n})}  =e^{-\frac{\alpha}{\delta}}+\smallO(1).
 \end{align*}
 
\leavevmode \\
 \fbox{\cref{eq:path_root_asymptotics_part3}}
Similar to \cref{eq:path_root_asymptotics_part2} it holds that
\begin{align*}
\paren*{\frac{q_n+2-\sqrt{q_n^2+4q_n}}{q_n+2+\sqrt{q_n^2+4q_n}}}^{\smallOmega(\sqrt{n})}
=\paren*{1-\frac{1}{\delta \sqrt{n}}(1+\smallO(1))}^{\smallOmega(\sqrt{n})} \to 0.
 \end{align*}
 This concludes the proof of the claim.

Now that we have established these identities we continue with the proof.
For brevity write $Z_n(q)=Z_{PG_n}(q)$.
Using the expression for the partition function given in \cref{eq:path_partition_recurrence} we have for each $m \in \N$ that
 \begin{equation}\label{eq:path_partition_shorthand}
 Z_m(q_n)=\frac{1}{\sqrt{1+4d_n^2}}\paren*{1-\paren*{\frac{q_n+2-\sqrt{q_n^2+4q_n}}{q_n+2+\sqrt{q_n^2+4q_n}}}^{m}}
 \paren*{\frac{q_n+2+\sqrt{q_n^2+4q_n}}{2}}^{m}.
\end{equation}
By \cref{eq:path_correlation} the 2-point correlation is given by
\begin{equation}\label{eq:path_correlation_square_root_proof}
\correlation[PG_n]{q_n}{x_n}{y_n}=1-\frac{Z_{n-d_n}(q_n)}{Z_{n}(q_n)}-\frac{d_n \left[Z_{x_n}(q_n)-Z_{x_n-1}(q_n)\right]
\left[Z_{n-y_n+1}(q_n)-Z_{n-y_n}(q_n)\right]}{q_nZ_{n}(q_n)}. 
\end{equation}
The result follows by plugging in \cref{eq:path_partition_shorthand} into \cref{eq:path_correlation_square_root_proof}
and repeatedly applying the limits in \cref{eq:path_root_asymptotics_part1,eq:path_root_asymptotics_part2,eq:path_root_asymptotics_part3}.
\end{proof}

\section{Asymptotic detection of modular structures}

\subsection{Star graphs: homogeneous case and with implanted communities}

\begin{proof}[Proof of \cref{Star}]
Let us start by providing an expression for the partition function of a regular tree with homogeneous weights.
Let $w \in (0,\infty)$, $h \in \N$, $k \in \N$, and let $L$ be the graph Laplacian of the $k$-regular tree with height $h$ and uniform weight $w$. Define $(\alpha_n)_{n \in \N_0}$ such that $\alpha_0 = q+w$ and $\alpha_{n+1} = q+(k+1)w - \frac{kw^2}{\alpha_n}$ for $n \in \N$. Then the characteristic polynomial of $L$ is given by
\begin{equation} \label{CP}
  \det[qI-L] = \left( \prod_{i=0}^{h-1} \alpha_i^{k^{h-i}} \right) \left( q + kw - \frac{kw^2}{\alpha_{h-1}} \right)
\end{equation}
In fact, observe that in the matrix $[qI-L]$ there is a $k^h\times k^h$ diagonal matrix with entries $q + w$ since the leaves are not connected with each other. Call this right lower diagonal matrix $D$ and call the corresponding left upper matrix $A$, 
right upper matrix $B$ and left lower matrix $C$. By Schur's determinant identity, we get $\det[qI-L] = \det[D]\det[A - BD^{-1}C]$. Here, $\det[D] = (q+w)^{k^h}$ since $D = (q+w)I$. This also gives  us $D^{-1} = \frac{1}{q + w}I$. 
Thus, $BD^{-1}C = \frac{1}{q+w}BC$ is a diagonal matrix with lower entries $\frac{kw^2}{q + w}$, on the places of the parents of the leaves, and upper entries $0$, on the places of the nodes that are not parents of the leaves (if there are any). 
If $h = 1$ we see that $A - BD^{-1}C = q + kw - \frac{kw^2}{q + w}$ and we are done.
If $h > 1$ we see that $A - BD^{-1}C$ is again a matrix with a right lower diagonal matrix. 
This time, the entries of the diagonal matrix are $q + (k+1)w - \frac{kw^2}{q + w}$. 
By iteration of Schur's determinant identity we get the formula in \eqref{CP}.  

We'll continue by checking the validity of the expressions in \eqref{eq:star_correlation_center} and \eqref{eq:star_correlation_leaves}.
By applying \eqref{CP} to the homogeneous star graph $SG_n$ we obtain that its partition function is given by
\begin{equation}\label{eq:star_partition}
 Z_{SG_n}(q)=q(q+w)^{n-2}(q+nw)
\end{equation}
Since $d(c,x) =1$, by \eqref{eq:close_factor} we have that
\begin{align*}
U_q(c,x) &= \frac{q \ Z_{SG_{n-1}}(q)}{Z_{SG_{n}}(q)} =\frac{q(q+(n-1)w)}{(q+w)(q+nw)}.
\end{align*}
Similarly, since $d(x,y) = 2$, by \cref{prop:inex} we have that
\begin{align*}
  U_q(x,y) &= \frac{2q \ Z_{SG_{n-1}}(q) \ - \ q^2 \ Z_{SG_{n-2}}(q)}{Z_{SG_{n}}(q)} = \frac{q(q^2 + (n+2)w q + 2(n-1)w^2)}{(q+w)^2(q+nw)},
\end{align*}
which finishes the proof of \eqref{eq:star_correlation_center} and \eqref{eq:star_correlation_leaves}. The asymptotics in \eqref{As1} and \eqref{As2} follow immediately.
\end{proof}

\begin{proof}[Proof of theorem \cref{CommStar}]
The Laplacian $L$ of the community star graph $CSG_{n,k}$ is given by
\[
	L = \left[ 
	\begin{array}{cccccccc}
		-k - (n-k-1)w & 1 & 1 & 1 & \cdots & w & w & w \\
		1 & -1 &  &  &  &  &  &  \\
		1 &  & -1 &  &  &  &  &  \\
		1 &  &  & -1 &  &  &  &  \\
		\vdots &&&& \ddots & && \\
		w &  &  &  &  & -w &  &  \\
		w &  &  &  &  &  & -w &  \\
		w &  &  &  &  &  &  & -w
	\end{array}\right]
\]
where the empty places are to be filled with zeros. The characteristic polynomial of this graph Laplacian is thus
\begin{equation} \label{eq:community_star_partition}
	\det[qI-L] = q(q+w)^{n-k-2}(q+1)^{k-1}[q^2 + ((n-k)w + k + 1)q + nq]
\end{equation}
which can be found by applying Schur's determinant identity as we did in the proof of \cref{Star}.
Hence, the eigenvalues of the graph Laplacian are:
\begin{equation*}
  \lambda_i = \left\{
  \begin{array}{ll}
    0 & \text{ if } \  i = 1 \\
    -w & \text{ if } \  i = 2,\ldots, n-k-1 \\
    -1 & \text{ if } \  i = n-k,\ldots,n-2\\
    \frac{1}{2}\mu + \frac{1}{2}\delta & \text{ if } \  i = n-1 \\
    \frac{1}{2}\mu - \frac{1}{2}\delta & \text{ if } \  i = n \\
  \end{array}\right.
\end{equation*}
where 
\begin{align*}
	\mu &= 
	(n-k)w + k + 1 \\
	\delta &=
	\sqrt{((n-1)^2 - 2nk + k^2 + 2n-1)w^2 + k^2 + 2((n-1)k-k^2-n)w + 2k + 1}.
\end{align*}
Denote the sets of vertices that are connected to the center vertex $c$ with a weight $1$ and $w$ by $V_1$ and $V_w$, respectivley.
Combining \cref{prop:inex} with \cref{eq:community_star_partition} leads to:
\begin{equation*}
U_q(c,x) = \left\{
  \begin{array}{ll}
    \frac{q(q^2 + ((n-k)w + k)q + (n-1)w)}{(q+1)(q^2 + ((n-k)w + k + 1)q + nw)} & x \in V_1 \\
    & \\
    \frac{q(q^2 + ((n-k-1)w + k + 1)q + (n-1)w)}{(q+w)(q^2 + ((n-k)w + k + 1)q + nw)} & x \in V_{w}
\end{array}\right.
\end{equation*}
and 
\begin{equation*}
U_q(x,y) =
  \left\{ 
  \begin{array}{ll}
    \frac{q(q^3 + ((n-k)w + (k+3))q^2 + ((3n - 2k)w + 2k)q + 2(n-1)w)}{(q+1)^2(q^2 + ((n-k)w + (k+1))q + nw)} & x, y \in V_1 \\
    & \\
    \frac{q(q^3 + ((n-k+1)w + (k+2))q^2 + ((n-k)w^2 + (2n-1)w + (k+1))q + (n-1)w(1+w))}{(q+1)(q+w)(q^2 + ((n-k)w + (k+1))q + nw)} & x \in V_1, y \in V_{w} \\
    & \\
    \frac{q(q^3 + ((n-k+2)w + (k+1))q^2 + ((n - 2k+2)w + 2(n-k-1)w^2)q + 2(n-1)w^2)}{(q+w)^2(q^2 + ((n-k)w + (k+1))q + nw)} 
    & x, y \in V_{w}
\end{array}\right.
\end{equation*}
From these explicit formulas, letting $q$ and $w$ be as in the statement, the limits in \cref{CommStar} follow.
\end{proof}

\subsection{Playing with degrees and hierarchical weights on trees}

\begin{proof}[Proof of \cref{thm:correlation_asymptotics_bounded_vertices}]
Note that $\P(e \in \Phi_q) \leq \tfrac{w(e)}{q+w(e)}$, since by \cref{lem:edge_probability} it holds that
\begin{equation*}
  1 \geq \P(e \in \Phi_q)+\P(x \in R_q, \ x \nleftrightarrow_q y)
  =\P(e \in \Phi_q)(1+\tfrac{q}{w(e)}).
\end{equation*}
Hence, if $q_k = \smallOmega(w_k(x,y))$, then we have that $\correlation{q_k}{x}{y} \to 1$.

Assume that $q_k = \smallO(w_k(x,y))$. 
Let $\tilde{\P}^{(k)}_x$ denote the law of the discrete-time random walk $\tilde{X}$ on $G_k$ starting at vertex $x \in V_k$ and let $\tilde{\tau}_{q}$ be a geometric killing time, as defined in \cref{eq:discrete_time_rw}. Let $\tau_x$ denote the first hitting time of $x$ by $\tilde{X}$. Let $m$ denote the number of vertices on the $x$-side of edge $(x,y)$ in $G$.
We will show by induction on $m$ that 
\begin{equation*}
 \tilde{\P}_x^{(k)}(\tau_y < \tilde{\tau}_{q}) = 1 - \bigTheta\paren*{\tfrac{q_k}{w_k(x,y)}}.
\end{equation*}

If $m=1$, then $x$ is a leaf in $G$.
It follows that
\begin{align*}
 \tilde{\P}_x^{(k)}(\tau_y < \tilde{\tau}_{q}) &= \frac{w_k(x,y)}{q_k+w_k(x,y)} \sim 1 - \tfrac{q_k}{w_k(x,y)}.
\end{align*}

Assume that $m \geq 2$. 
Let $N_x$ denote the set of neighbors of $x$ in $G$.
Since the limit $\lim_{k \to \infty}\tfrac{w_k(e)}{q_k}$ exists for all edges incident to $x$, 
we can partition $N_x\setminus\cbrac{y}$ into two parts:
the first part $N^{\leq}_x=\cbrac{v \in N_x \setminus \cbrac{y} \c w_k(x,v) = \bigO(q_k)}$
consists of all neighbors for which the weight of the edge between $x$ and 
that neighbor has no larger order than $q_k$;
the second part $N^{>}_x=\cbrac{v \in N_x \setminus \cbrac{y} \c w_k(x,v) = \smallOmega(q_k)}$
consists of the remaining neighbors.

Then for each $v \in N^{>}_x$ we have that $q_k=\smallO(w_k(x,v))$.
For each such $v$ it follows by the induction hypothesis that 
$\P_{v}^{(k)}(\tau_x < \tilde{\tau}_{q}) = 1 - \bigTheta\paren*{\tfrac{q_k}{w_k(x,v)}}$.
It follows that
\begin{align*}
 \tilde{\P}_x^{(k)}(\tau_y < \tilde{\tau}_{q}) &= \frac{w_k(x,y)}{q_k + w_k(x,y)
 +\sum_{v \in N_x \setminus\cbrac{y}} w_k(x,v)(1-\tilde{\P}^{(k)}_v(\tau_{x} < \tilde{\tau}_{q}))} \\
 &= \frac{w_k(x,y)}{w_k(x,y) + \bigTheta(q_k)+\sum_{v \in N^{\leq}_x} w_k(x_n,v)
 (1-\tilde{\P}^{(k)}_v(\tau_{x} < \tilde{\tau}_{q}))} \\
 &= \frac{w_k(x,y)}{w_k(x,y) + \bigTheta(q_k)} = 1-\bigTheta\paren*{\frac{q_k}{w_k(x,y)}}.
\end{align*}

Thus we have that $\tilde{\P}_x^{(k)}(\tau_y < \tilde{\tau}_{q}) = 1-\smallO(1)$,
from which it follows that $\correlation[G_k]{q_k}{x}{y} \to 0$.
\end{proof}

\begin{lemma}[Parent hitting asymptotics with small $q$ in hierarchical trees of bounded height]
\label{lem:parent_hitting_asymptotics}
For each $n \in \N$ let $G_n=(V_n,E_n,w_n)$ be a hierarchical tree of height $H=H_n$, see \cref{fig:regular_hierarchical_tree}. 
Denote the weight of an edge at height $i \in [H]$ in $G_n$ by $w_{i}^{(n)}$ and recall that $w_{1}^{(n)}\leq\ldots\leq w_{H}^{(n)}$.

For each $n \in \N$ let $y_n$ be a vertex in $G_n$ at height $h=h_{n}$ such that $H_n-h_n$ is constant in $n$.
Let $x_n$ denote the parent of $y_n$.
For each vertex $v$ in $G_n$ let $\ell_n(v)$ denote the number of vertices in $G_n$ that have $v$ in their ancestry.
Let $(q_n)_{n \in \N}$ be sequence of rooting parameters such that $q_n=\smallO\paren*{\tfrac{w_{h}^{(n)}}{\ell_n(y)}}$.

For each $n \in \N$ let $\tilde{\P}^{(n)}_x$ denote the law of the discrete-time random walk $\tilde{X}$ on $G_n$ starting at vertex 
$x \in V_n$ and let $\tilde{\tau}_{q}$ be a geometric killing time, as defined in \cref{eq:discrete_time_rw}.
Let $\tau_x$ denote the first hitting time of $x$ by $\tilde{X}$.
Then as $n \to \infty$ it holds that
\begin{equation*}
 \tilde{\P}_{y}^{(n)}(\tau_{x} < \tilde{\tau}_{q}) \sim 1-\frac{q_n\ell_n(y)}{w_{h}^{(n)}}.
\end{equation*}
\end{lemma}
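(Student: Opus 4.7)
My plan is to set up a recursion for the one-sided killing probabilities along the hierarchy and push the asymptotic $p_v \sim q_n \ell_n(v)/w(\mathrm{parent}(v),v)$ from the leaves of the subtree below $y_n$ up to $y_n$ itself. First, I would reduce to continuous time: as observed in the proof of \cref{lem:derivative_bound_hitting}, the discrete-time killing event has the same probability as its continuous-time analogue, so $\tilde\P_y(\tau_x < \tilde\tau_q) = \P_y(\tau_x < \tau_q) = 1 - \P_y(\tau_q < \tau_x)$ for an independent $\tau_q \sim \mathrm{Exp}(q_n)$, and it suffices to establish $\P_y(\tau_q < \tau_x) \sim q_n\ell_n(y)/w_h^{(n)}$.

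The next step is to derive, for every vertex $v$ in the subtree below $y_n$ with parent $u$, a closed form for $p_v := \P_v(\tau_q < \tau_u)$. A first-step analysis at $v$ with competing exponential clocks of rates $q_n$ (killing), $w(u,v)$ (jump to parent) and $w(v,z)$ (jump to each child $z$), combined with strong Markov at the first return to $v$ after any child-excursion, yields
\begin{equation*}
  p_v \;=\; \frac{q_n + \rho_v}{q_n + w(u,v) + \rho_v}, \qquad \rho_v := \sum_{z \text{ child of } v} w(v,z)\, p_z.
\end{equation*}
Note this uses crucially that $G_n$ is a tree: the path from any child's subtree back to $x_n$ must pass through $v$, so that excursions are self-contained.

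With this identity I would prove by induction on the depth from the leaves of the subtree rooted at $y_n$ that $p_v = \tfrac{q_n \ell_n(v)}{w(u,v)}(1+o(1))$ uniformly over vertices $v$ at a given depth. For a leaf, $\rho_v = 0$ gives $p_v = q_n/(q_n + w(u,v))$, and the ratio $q_n/w(u,v) \leq q_n \ell_n(y)/w_h^{(n)} \to 0$ by the hierarchy $w_i \leq w_{i+1}$, matching $\ell_n(v) = 1$. For the induction step, the hypothesis yields $w(v,z) p_z = q_n \ell_n(z)(1 + o(1))$; summing over the children and using $\ell_n(v) = 1 + \sum_z \ell_n(z)$ gives $\rho_v = q_n (\ell_n(v) - 1)(1 + o(1))$, whence $q_n + \rho_v = q_n \ell_n(v)(1+o(1))$. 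Since $q_n \ell_n(v) \leq q_n \ell_n(y) = o(w_h^{(n)}) \leq o(w(u,v))$, the denominator in the closed form is $w(u,v)(1+o(1))$, producing $p_v = q_n\ell_n(v)/w(u,v)(1+o(1))$. Applying this at $v = y_n$ (with $u = x_n$ and $w(u,v) = w_h^{(n)}$) gives the desired asymptotic.

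The main technical obstacle will be keeping the $o(1)$ corrections uniform even though the branching of $G_n$ may grow with $n$. This is absorbed by the bounded-depth hypothesis $H_n - h_n = k$: only finitely many induction steps are taken, so the errors compound only boundedly, and the hierarchy of weights together with the monotonicity $\ell_n(v) \leq \ell_n(y)$ for $v$ below $y$ ensures that the scale separation $q_n\ell_n(v) \ll w(u,v)$ required to Taylor-expand the denominator is driven by the single small parameter $q_n\ell_n(y)/w_h^{(n)}$ at every level.
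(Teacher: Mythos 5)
Your proposal is correct and takes essentially the same route as the paper: the same first-step/excursion recursion (your closed form $1-p_v = w(u,v)/(q_n+w(u,v)+\rho_v)$ is exactly the paper's \cref{eq:parent_hitting_recursive}, written for the continuous-time walk instead of the discrete skeleton), followed by the same bounded-depth induction from the leaves upward, with the weight hierarchy and $\ell_n(v)\le\ell_n(y)$ supplying the single small parameter $q_n\ell_n(y)/w_h^{(n)}$. The only minor variation is that you handle uniformity of the asymptotics over a possibly growing number of children by explicit error propagation, whereas the paper delegates this to its sum-convergence \cref{lem:sum_convergence}; both resolutions are valid.
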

 \begin{proof}
 Write $k=H_n-h_n$, which is independent of $n$. We proceed by induction on $k$.

For $k=0$ we have that all vertices $y_n$ are leaves. We then have that $\ell_n(y)=1$, so that $q_n=\smallO(w_{H}^{(n)})$.
It follows that
\begin{align*}
 \P_{y}^{(n)}(\tau_{x} < \tilde{\tau}_{q}) &= \frac{w_{H}^{(n)}}{w_{H}^{(n)}+q_n} \sim 1- \tfrac{q_n}{w^{(n)}_{H}}.
\end{align*}

Now assume that $k > 0$. 
Let $C^{(n)}_y \subseteq V_n$ denote the set of child vertices of $y_n$ in $G_n$.
Note that since $k>0$, we have for all $n$ that $C^{(n)}_y$ is non-empty.
For each $n \in \N$ let $z_n$ be a child of $y_n$. Note that $\tfrac{w_{h}^{(n)}}{\ell_n(y)} \leq \tfrac{w_{h+1}^{(n)}}{\ell_n(z)}$.
This means that $q_n=\smallO\paren*{\tfrac{w_{h+1}^{(n)}}{\ell_n(z)}}$.
Thus by the induction hypothesis we then have that
\begin{equation*}
 \P_{z}^{(n)}(\tau_{y} < \tilde{\tau}_{q}) \sim 1-\frac{q_n\ell_n(z)}{w_{h+1}^{(n)}}. 
\end{equation*}
Since this holds for all possible choices of sequences of children of $y_n$, 
\cref{lem:sum_convergence} stated at the end of this section gives us that
\begin{equation}\label{eq:sum_convergence}
 \sum_{z \in C_y^{(n)}}\P_{z}^{(n)}(\tau_{y} < \tilde{\tau}_{q}) 
 \sim \sum_{z \in C_y^{(n)}} 1-\frac{q_n\ell_n(z)}{w_{h+1}^{(n)}}. 
\end{equation}

Note that for all $n$ it holds that
\begin{align*}
\tilde{\P}_{y}^{(n)}(\tau_{x} < \tilde{\tau}_{q}) &= \tilde{\P}_{y}^{(n)}(X_1 = x) 
+ \tilde{\P}_{y}^{(n)}(X_1 = y)\tilde{\P}_{y}^{(n)}(\tau_{x} < \tilde{\tau}_{q}) 
 + \sum_{z \in C_y^{(n)}}\tilde{\P}_{y}^{(n)}(X_1 = z)\tilde{\P}_{z}^{(n)}(\tau_{y} < \tilde{\tau}_{q})\tilde{\P}_{y}^{(n)}(\tau_{x} < \tilde{\tau}_{q}). 
\end{align*}
Solving this equation gives us that 
\begin{align}\notag
\tilde{\P}_{y}^{(n)}(\tau_{x} < \tilde{\tau}_{q}) &= 
\frac{\tilde{\P}_{y}^{(n)}(\tilde{X}_1 = x)}{1-\tilde{\P}_{y}^{(n)}(\tilde{X}_1 = y)
-\sum_{z \in C_y^{(n)}}\tilde{\P}_{y}^{(n)}(\tilde{X}_1 = z)\tilde{\P}_{z}^{(n)}(\tau_{y} < \tilde{\tau}_{q})} \\
&= \frac{w_{h}^{(n)}}{q_n + w_{h}^{(n)} + w_{h+1}^{(n)}\sum_{z \in C_y^{(n)}}\paren*{1-\tilde{\P}_{z}^{(n)}(\tau_{y} < \tilde{\tau}_{q})}}. \label{eq:parent_hitting_recursive}
\end{align}
Since $q_n=\smallO\paren*{\tfrac{w_{h}^{(n)}}{\ell_n(y)}}$, we then have that
\begin{align*}
 \tilde{\P}_{y}^{(n)}(\tau_{x} < \tilde{\tau}_{q}) &= 
 \frac{w^{(n)}_{h}}{q_n + w^{(n)}_{h} + w^{(n)}_{h+1}\sum_{z \in C^{(n)}_y}\paren*{1-\tilde{\P}^{(n)}_{z}(\tau_{y} < \tilde{\tau}_{q})}}  \sim \frac{w^{(n)}_{h}}{q_n + w^{(n)}_{h} + w^{(n)}_{h+1}\sum_{z \in C^{(n)}_y}\frac{q_n\ell_n(z)}{w_{h+1}^{(n)}}} \\
 &= \frac{w^{(n)}_{h}}{q_n + w^{(n)}_{h} + q_n\sum_{z \in C^{(n)}_y}\ell_n(z)} = \frac{w^{(n)}_{h}}{w^{(n)}_{h} + q_n\ell_n(y)}  \sim 1-\frac{q_n\ell_n(y)}{w_{h}^{(n)}}.
\end{align*}
\end{proof}

\begin{proof}[Proof of \cref{lem:hitting_time_expression}]
We will reason using the representation in~\eqref{LEdec} coming from Wilson's sampling construction.
We note in particular that in order for the directed edge $(x,y)$ to be present in $\Phi_q$ is equivalent to require that the loop-erased trajectory in~\eqref{LEdec} includes $y$, which can be expressed in terms of hitting times of the random walk as 
\begin{align*}
 \P((x,y) \in \Phi_q) &= \P_x(\tau_y < \tau_{q})
 \sum_{k=0}^{\infty}\paren*{\P_y(\tau_x < \tau_{q})\P_x(\tau_y < \tau_{q})}^k\P_y(\tau_{q} < \tau_x) \\
 &=\frac{\P_x(\tau_y < \tau_{q})\paren*{1-\P_y(\tau_x < \tau_{q})}}{1-\P_x(\tau_y < \tau_{q})\P_y(\tau_x < \tau_{q})}, 
\end{align*}
where the above sum runs over the number of times that the random walk reaches $y$. We notice in particular that the above step is equivalent to the using the forest transfer-current kernel in~\cite{avena2018transfercurrent}.

For the reversed edge $(y,x)$, we can write
\begin{align*}
 \P((y,x) \in \Phi_q) &= \paren*{1-\P((x,y) \in \Phi_q)}\P_y(\tau_x < \tau_{q}),
\end{align*}
where these two factors correspond to~\eqref{LEdec}. 
Therefore, it follows that
\begin{align*}
 \correlation[G]{q}{x}{y} &= 1-\P(x \leftrightarrow_{\Phi_q} y) = 1-\P((x,y) \in \Phi_q)-\P((y,x) \in \Phi_q) \\
 &=1-\tfrac{\P_x(\tau_y < \tau_{q})\paren*{1-\P_y(\tau_x < \tau_{q})}}{1-\P_x(\tau_y < \tau_{q})\P_y(\tau_x < \tau_{q})}
 -\paren*{1-\tfrac{\P_x(\tau_y < \tau_{q})\paren*{1-\P_y(\tau_x < \tau_{q})}}{1-\P_x(\tau_y < \tau_{q})\P_y(\tau_x < \tau_{q})}}\P_y(\tau_x < \tau_{q}) \\ 
 &=\frac{1-\P_x(\tau_y < \tau_{q})-\P_y(\tau_x < \tau_{q})+\P_x(\tau_y < \tau_{q})\P_y(\tau_x < \tau_{q})}{1-\P_x(\tau_y < \tau_{q})\P_y(\tau_x < \tau_{q})}.
\end{align*}
\end{proof}

\begin{proof}[Proof of \cref{thm:correlation_asymptotics_regular_hierarchical}]
 If $d_n$ is bounded, then the result follows from \cref{thm:correlation_asymptotics_bounded_vertices}.
 Hence, we can assume that $d_n\to \infty$ as $n \to \infty$.
 Since $G_n$ is a regular tree, the number of vertices with $y_n$ in their ancestry is given by $\ell_n(y)=\sum_{i=0}^{k}d_n^i$. This means that $\ell_n(y) \sim d_n^k$ as as $n \to \infty$. Hence, the case $q_n = \smallO\paren*{\tfrac{w_{n}(e_n)}{d_n^k}}$ follows directly from \cref{lem:parent_hitting_asymptotics,lem:hitting_time_expression}.
 
Assume that $q_k = \smallOmega\paren*{\tfrac{w_{k}(e_k)}{d_k^m}}$. 
By \cref{thm:monotone_correlations_symmetric} we can assume without loss of generality that also $q_n=\smallO\paren*{\tfrac{w_n(e_n)}{d_n^{k-1}}}$.
 
For ech $n \in \N$ let $\tilde{\P}^{(n)}_x$ denote the law of the discrete-time random walk $\tilde{X}$ on $G_n$ starting at vertex 
$x \in V_n$ and let $\tilde{\tau}_{q}$ be a geometric killing time, as defined in \cref{eq:discrete_time_rw}.
Let $\tau_x$ denote the first hitting time of $x$ by $\tilde{X}$.
 By \cref{lem:hitting_time_expression} it is sufficient to show that both $\tilde{\P}^{(n)}_y(\tau_{x} < \tilde{\tau}_{q}) \to 0$ and $\tilde{\P}^{(n)}_x(\tau_{y} < \tilde{\tau}_{q}) \to 0$ as $n \to \infty$.
 
 First we consider $\tilde{\P}^{(n)}_y(\tau_{x} < \tilde{\tau}_{q})$.
 Let $z_k$ be a child vertex of $y_k$.
 Then by \cref{lem:parent_hitting_asymptotics} we have that
 \begin{equation*}
  \tilde{\P}^{(n)}_z(\tau_{y} < \tilde{\tau}_{q}) \sim 1-\frac{q_n\sum_{i=0}^{m-1}d_n^{i}}{w_n(y_n,z_n)}
 \end{equation*}
 So, by using that $G_n$ is a regular tree, we have analogous to \cref{eq:parent_hitting_recursive} that
 \begin{align*}
 \tilde{\P}^{(n)}_y(\tau_{x} < \tilde{\tau}_{q}) &= 
 \frac{w_{n}(e_n)}{q_n + w_{n}(e_n) + d_n\paren*{1-\tilde{\P}^{(n)}_z(\tau_{y} < \tilde{\tau}_{q})} w_{n}(y_n, z_n) }\\
 &\sim \frac{w_{n}(e_n)}{q_n+w_{n}(e_n) + d_nq_n\sum_{i=0}^{m-1}d_n^{i}}= \frac{w_{n}(e_n)}{q_n+w_{n}(e_n) + \smallOmega(w_{n}(e_n))} =\smallO(1).
 \end{align*}
 
 It remains to show that $\tilde{\P}^{(n)}_x(\tau_{y} < \tilde{\tau}_{q}) \to 0$.
 Let $u$ denote the parent of $x$.
  Then it holds that
\begin{align*}
 \tilde{\P}^{(n)}_x(\tau_{y} < \tilde{\tau}_{q}) 
 &= \tilde{\P}^{(n)}_x(\tilde{X}_1 = y) + \tilde{\P}^{(n)}_x(\tilde{X}_1 = x)\tilde{\P}^{(n)}_x(\tau_{y} < \tilde{\tau}_{q})+
 \tilde{\P}^{(n)}_x(\tilde{X}_1 = u)\tilde{\P}^{(n)}_u(\tau_{x} < \tilde{\tau}_{q})\tilde{\P}^{(n)}_x(\tau_{y} < \tilde{\tau}_{q}) \\
 &\quad+ (d_n-1) \tilde{\P}^{(n)}_x(\tilde{X}_1 = y)\P_{y}^{(k,q)}(\tau_{x} < \tilde{\tau}_{q})\tilde{\P}^{(n)}_x(\tau_{y} < \tilde{\tau}_{q}).
\end{align*} 
This gives us that
\begin{align*}
 \tilde{\P}^{(n)}_x(\tau_{y} < \tilde{\tau}_{q}) &=
 \frac{\tilde{\P}^{(n)}_x(\tilde{X}_1 = y)}{1-\tilde{\P}^{(n)}_x(\tilde{X}_1 = x)-\tilde{\P}^{(n)}_x(\tilde{X}_1 = u)
 \tilde{\P}^{(n)}_u(\tau_{x} < \tilde{\tau}_{q})-
 (d_n-1) \tilde{\P}^{(n)}_x(\tilde{X}_1 = y)\tilde{\P}^{(n)}_y(\tau_{x} < \tilde{\tau}_{q})\tilde{\P}^{(n)}_x(\tau_{y} < \tilde{\tau}_{q})} \\
 & = \frac{w_{n}(e_n)}{q_n + (1-\tilde{\P}^{(n)}_x(\tau_{y} < \tilde{\tau}_{q}))w_{n}(x,u)  + w_{n}(e_n) 
 + w_{n}(e_n) (d_n-1)\paren*{1-\tilde{\P}^{(n)}_y(\tau_{x} < \tilde{\tau}_{q})}}. \label{eq:child_hitting}
\end{align*} 
 Since we have already shown that $\P^{(n)}_y(\tau_{x} < \tilde{\tau}_{q}) \to 0$,
 it follows that
 \begin{align*}
  \tilde{\P}^{(n)}_x(\tau_{y} < \tilde{\tau}_{q}) &\leq 
  \frac{w_{n}(e_n)}{w_{n}(e_n) + w_{n}(e_n) (d_n-1)\paren*{1-\tilde{\P}^{(n)}_y(\tau_{x} < \tilde{\tau}_{q})}} =\frac{1}{1 + (d_n-1)\paren*{1-\smallO(1)}} = \smallO(1).
 \end{align*}
 \end{proof}

The simple lemma below has been used to show \cref{eq:sum_convergence}.
\begin{lemma}\label{lem:sum_convergence}
 For each $n \in \N$ let $\ell_n \in \N$ be given and let 
 $(\alpha^{(n)}_i)_{i \in [\ell_n]}$ and $(\beta^{(n)}_i)_{i \in [\ell_n]}$ be real valued sequences of length $\ell_n$.
 Let $\mathcal{F}=\cbrac{f \in \N^{\N} \c f(n) \in [\ell_n] \text{ for all }n \in \N}$ 
 denote the set of choice functions on the collection $\cbrac{[\ell_1], [\ell_2], \ldots}$.
 Assume that for each $f \in \mathcal{F}$ it holds that $\alpha^{(n)}_{f(n)} \sim \beta^{(n)}_{f(n)}$ as $n \to \infty$.
 Then as $n \to \infty$ it holds that
 \begin{equation*}
  \sum_{i = 1}^{\ell_n}\alpha^{(n)}_i \sim \sum_{i = 1}^{\ell_n}\beta^{(n)}_i.
 \end{equation*}
\end{lemma}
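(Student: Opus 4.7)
The plan is to reduce the stated sum-level asymptotic equivalence to a \emph{uniform} version of the pointwise assumption, and then close by a one-line triangle inequality estimate. In the context in which the lemma is applied (namely in the proof of \cref{thm:correlation_asymptotics_regular_hierarchical}), all entries $\alpha^{(n)}_i,\beta^{(n)}_i$ are probabilities bounded in $[0,1]$, and in particular one may freely divide by $\beta^{(n)}_i$, as these are hitting probabilities that stay strictly positive; I shall tacitly assume this below.

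The core step is the following claim: under the hypothesis of the lemma,
\begin{equation*}
 \varepsilon_n := \sup_{i \in [\ell_n]} \abs*{\frac{\alpha^{(n)}_i}{\beta^{(n)}_i}-1} \xrightarrow[n \to \infty]{} 0.
\end{equation*}
I would prove this by contradiction. If $\varepsilon_n \not\to 0$, there exist $\varepsilon_0 > 0$ and a subsequence $(n_k)$ along which $\varepsilon_{n_k} > \varepsilon_0$ for every $k$. For each $k$ pick an index $i_k \in [\ell_{n_k}]$ witnessing the supremum up to a factor $1/2$, i.e.\ with $|\alpha^{(n_k)}_{i_k}/\beta^{(n_k)}_{i_k} - 1| > \varepsilon_0/2$. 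Define a choice function $f \in \mathcal{F}$ by $f(n_k) := i_k$ for all $k$ and $f(n) := 1$ otherwise. Then $\alpha^{(n)}_{f(n)}/\beta^{(n)}_{f(n)}$ does not tend to $1$ as $n \to \infty$ (since the subsequence along $n_k$ stays at distance at least $\varepsilon_0/2$ from $1$), contradicting the hypothesis $\alpha^{(n)}_{f(n)} \sim \beta^{(n)}_{f(n)}$.

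With this uniform estimate in hand, the conclusion is routine: for any $\varepsilon > 0$ and all sufficiently large $n$ one has $|\alpha^{(n)}_i - \beta^{(n)}_i| \leq \varepsilon\,\beta^{(n)}_i$ for every $i \in [\ell_n]$, whence by the triangle inequality
\begin{equation*}
 \abs*{\sum_{i=1}^{\ell_n} \alpha^{(n)}_i - \sum_{i=1}^{\ell_n} \beta^{(n)}_i} \leq \sum_{i=1}^{\ell_n}\abs{\alpha^{(n)}_i - \beta^{(n)}_i} \leq \varepsilon \sum_{i=1}^{\ell_n} \beta^{(n)}_i.
\end{equation*}
Dividing by $\sum_i \beta^{(n)}_i$ (positive by our sign convention) and letting $\varepsilon \downarrow 0$ along an appropriate scale yields $\sum_i \alpha^{(n)}_i / \sum_i \beta^{(n)}_i \to 1$, which is the asserted asymptotic equivalence.

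The only delicate point is the contradiction argument upgrading pointwise-over-choices to uniform convergence; once that is established the sum-level statement is immediate. I would also note in passing that the same argument works if some entries $\beta^{(n)}_i$ vanish, provided one reformulates the equivalence as $\alpha^{(n)}_{f(n)} - \beta^{(n)}_{f(n)} = o(\beta^{(n)}_{f(n)} + \text{correction})$, but in the applications of this lemma in \cref{lem:parent_hitting_asymptotics} the $\beta^{(n)}_i$'s are uniformly bounded below along the relevant scale, so no additional care is needed.
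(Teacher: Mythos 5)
Your proposal is correct and follows essentially the same route as the paper: both arguments exploit the freedom in the choice function to control the worst index (you via a subsequence/contradiction argument yielding $\sup_i|\alpha^{(n)}_i/\beta^{(n)}_i-1|\to 0$, the paper by applying the hypothesis directly to the argmax choice function $f^*$), and then conclude with the same weighted triangle-inequality estimate. The implicit positivity of the $\beta^{(n)}_i$, which you flag explicitly, is likewise assumed tacitly in the paper's proof.
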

\begin{proof}
 For all $\varepsilon > 0$ and each $f \in \mathcal{F}$, there exists an $N(\varepsilon,f) \in \N$ 
 such that for all $n \geq N(\varepsilon, f)$ it holds that 
 \begin{align*}
  \abs*{\tfrac{\alpha^{(n)}_{f(n)}}{\beta^{(n)}_{f(n)}} -1} < \varepsilon.
 \end{align*}
 Define the function $f^* \in \mathcal{F}$ by 
 \begin{equation*}
  f^*(n)=\mathrm{argmax}_{i \in [\ell_n]} \abs*{\alpha^{(n)}_{i} - \beta^{(n)}_{i}}.
 \end{equation*}
Then for all $\varepsilon > 0$ and all $n \geq N(\varepsilon, f^*)$ it holds that
\begin{align*}
 \abs*{\frac{\sum_{i = 1}^{\ell_n}\alpha^{(n)}_i}{\sum_{i = 1}^{\ell_n}\beta^{(n)}_i} - 1} 
 &= \frac{1}{\sum_{i = 1}^{\ell_n}\beta^{(n)}_i} \abs*{\sum_{i = 1}^{\ell_n}\alpha^{(n)}_i - \sum_{i = 1}^{\ell_n}\beta^{(n)}_i}  \leq \frac{\sum_{i = 1}^{\ell_n}\abs*{\alpha^{(n)}_i-\beta^{(n)}_i}}{\sum_{i = 1}^{\ell_n}\beta^{(n)}_i} \\
 &= \frac{\sum_{i = 1}^{\ell_n}\beta^{(n)}_i\abs*{\tfrac{\alpha^{(n)}_i}{\beta^{(n)}_i}-1}}{\sum_{i = 1}^{\ell_n}\beta^{(n)}_i} 
 \leq \abs*{\tfrac{\alpha^{(n)}_{f^*(n)}}{\beta^{(n)}_{f^*(n)}}-1} \frac{\sum_{i = 1}^{\ell_n}\beta^{(n)}_i}{\sum_{i = 1}^{\ell_n}\beta^{(n)}_i} = \abs*{\tfrac{\alpha^{(n)}_{f^*(n)}}{\beta^{(n)}_{f^*(n)}}-1}  < \varepsilon.
\end{align*} 
\end{proof}

\subsection{A two communities bottleneck graph}
\begin{proof}[Proof of \cref{prop:tcg_partition}]
\leavevmode \fbox{\Cref{eq:tcg_partition}}
 For a graph $G$ let $\NNM^{(G)}$ denote the non-normalized measure on $G$.
 Let $K_n$ be the complete graph on $n$ vertices.
 We can express the partition function of $BG_{n,m}$ in terms of the partition functions
 and the non-normalized measure of rooting events in the complete graphs $K_n$ and $K_m$. 
 
 Let $L_n$ denote the graph Laplacian of $K_n$.
 The partition function of $K_n$ is given by 
 \begin{align}
  Z_{K_n}(q)  &=q(q+n)^{n-1}. \label{eq:complete_partition}
 \end{align} 
 Let $U$ be a set of vertices of $K_n$ with $\abs{U}=r$ and write $[qI-L_n]_U$ to denote the submatrix of $qI-L_n$ otained by removing all rows and columns corresponding to vertices in $U$.
 Then non-normalized measure of the event that at least all vertices in $U$ are roots in a random rooted forest of $K_n$ is given by
 \begin{align}\notag
  \NNM^{(K_n)}(U \subseteq R_q) 
  &= q^r \det[qI-L_n]_U \\\notag
  &= q^r \det[(q+r)I-L_{n-r}] \\\notag
  &= q^r Z_{K_{n-r}}(q+r) \\
  &= q^r (q+r)(q+n)^{n-r-1}. \label{eq:complete_rooting}
 \end{align}
 For the partition function of $BG_{n,m}$, \cref{lem:graph_extension_single_edge} gives us that
 \begin{align*}
  Z_{BG_{n,m}}(q) &= Z_{K_n}(q)Z_{K_m}(q) + \tfrac{w}{q}Z_{K_n}(q)\NNM^{(K_m)}(b' \in R_q) + 
  \tfrac{w}{q}Z_{K_m}(q)\NNM^{(K_n)}(b \in R_q) \\
  %&=q(q+n)^{n-1} q(q+m)^{m-1} + w(q+n)^{n-1} q(q+1)(q+m)^{n-2} +   w(q+m)^{m-1}q(q+1)(q+n)^{n-2} \\
  &=q\paren*{q(q+n)(q+m) + w(q+1)(2q+n+m)}(q+n)^{n-2}(q+m)^{m-2}.
 \end{align*}
 
\leavevmode \\
\fbox{\Cref{eq:tcg_bridge_correlation}}

We can express $U_q(b,b')$ explicitly by using \cref{prop:inex,eq:complete_partition,eq:tcg_partition}
\begin{align}
 U_q(b,b') &= \frac{Z_{K_n}(q)Z_{K_m}(q)}{Z_{BG_{n,m}}(q)} \label{eq:tcg_correlation_complete_partition} \\
 %&= \frac{q(q+n)^{n-1}q(q+m)^{m-1}}{q\paren*{q(q+n)(q+m) + w(q+1)(2q+n+m)}(q+n)^{n-2}(q+m)^{m-2}} \\
 &= \frac{q(q+n)(q+m)}{q(q+n)(q+m) + w(q+1)(2q+n+m)}. \label{eq:tcg_correlation_bridge_explicit}
\end{align}
The result of \cref{eq:tcg_bridge_correlation} follows directly from this expression.
 \leavevmode \\
 \fbox{\Cref{eq:tcg_cluster_correlation}}
 We will assume that $x$ and $x'$ both belong to the clique of size $n$, as the other case can be proven similarly.
  By \cref{lem:graph_extension_single_edge} we have that
 \begin{align*}
  \NNM^{(BG_{n,m})}(x \leftrightarrow_{\Phi_q} x') &= \NNM^{(K_n)}(x \leftrightarrow_{\Phi_q} x')Z_{K_m}(q)+
   \tfrac{w}{q}\NNM^{(K_n)}(x \leftrightarrow_{\Phi_q} x', \ b\in R_q)Z_{K_m}(q) \\
   &\quad+\tfrac{w}{q}\NNM^{(K_n)}(x \leftrightarrow_{\Phi_q} x')\NNM^{(K_m)}(b' \in R_q).
 \end{align*}
By \cref{eq:complete_partition,eq:complete_rooting} it follows that
 \begin{align} \notag
   \correlation[BG_{n,m}]{q}{x}{x'}
   &=1-\frac{\NNM^{(BG_{n,m})}(x \leftrightarrow_{\Phi_q} x')}{Z_{BG_{n,m}}(q)} \\\notag
   &=1-\frac{\NNM^{(K_n)}(x \leftrightarrow_{\Phi_q} x')Z_{K_m}(q)
   +\tfrac{w}{q}\NNM^{(K_n)}(x \leftrightarrow_{\Phi_q} x', \ b \in R_q)Z_{K_m}(q)}{\frac{Z_{K_n}(q)Z_{K_m}(q)}{\correlation[BG_{n,m}]{q}{b}{b'}}} \\
    &\quad +\frac{\tfrac{w(q+1)}{q(q+m)}\NNM^{(K_n)}(x \leftrightarrow_{\Phi_q} x')Z_{K_m}(q)}
    {\frac{Z_{K_n}(q)Z_{K_m}(q)}{\correlation[BG_{n,m}]{q}{b}{b'}}} \\
    &=1-\correlation[BG_{n,m}]{q}{b}{b'}
    \paren*{\paren*{1+\tfrac{w(q+1)}{q(q+m)}}\P^{(K_n)}(x \leftrightarrow_{\Phi_q} x')+
    \tfrac{w(q+1)}{q(q+n)}\P^{(K_n)}(x \leftrightarrow_{\Phi_q} x' \mid b \in R_q)}. \label{eq:bg_correlation_cluster}
    \end{align}

Let $H$ denote the graph obtained by removing all outgoing edges of $b$ from $K_n$, while retaining the ingoing edges.
By \cref{lem:spatial_markov_property} it then holds that 
$\P^{(K_n)}(x \leftrightarrow_{\Phi_q} x' \mid b \in R_q) = \P^{(H)}(x \leftrightarrow_{\Phi_q} x')$.
Let $\P_x$ denote the law of the random walk on $H$ starting at $x$ and $\tau_q$ an independent exponential killing time with rate $q$.
Since the hitting time $\tau_b$ has an exponential distribution with rate $1$, 
we can identify the random walk on $H$ killed at rate $q$ with a random walk on $K_{n-1}$ killed at rate $q+1$, 
by killing the random walk when it hits $b$. 
By analyzing Wilson's algorithm on $H$ with the first two random walks starting at $x$ and $x'$,
this gives us that
\begin{align}\notag
\P^{(K_n)}(x \leftrightarrow_{\Phi_q} x' \mid b \in R_q) &= \P^{(H)}(x \leftrightarrow_{\Phi_q} x') \\\notag
&= \P^{(K_{n-1})}(x \leftrightarrow_{\Phi_{q+1}} x') + 
 \P^{(K_{n-1})}(x \nleftrightarrow_{\Phi_{q+1}} x') \ \P_x(\tau_{b} < \tau_{q}) \ \P_{x'}(\tau_{b} < \tau_{q}) \\\notag
 &= \P^{(K_{n-1})}(x \leftrightarrow_{\Phi_{q+1}} x') + \frac{1}{(q+1)^2} \P^{(K_{n-1})}(x \nleftrightarrow_{\Phi_{q+1}} x') \\
 &= \frac{1}{(q+1)^2}+\frac{q(q+2)}{(q+1)^2} \P^{(K_{n-1})}(x \leftrightarrow_{\Phi_{q+1}} x'). \label{eq:complete_conditional_connection}
\end{align}
By \cite[Theorem 1]{avena2019meanfield} we have that 
\begin{equation}
 \P^{(K_{n})}(x \leftrightarrow_{\Phi_q} x') \to 
 \begin{cases}
  1 \text{ if } q = \smallO(\sqrt{n}) \\
  0 \text{ if } q = \smallOmega(\sqrt{n}),
 \end{cases} \label{eq:AMQ}
\end{equation}
 which together with \cref{eq:complete_conditional_connection} gives us that 
\begin{equation*}
 \P^{(K_n)}(x \leftrightarrow_{\Phi_q} x' \mid b \in R_q) \to 
 \begin{cases}
  1 \text{ if } q = \smallO(\sqrt{n}) \\
  0 \text{ if } q = \smallOmega(\sqrt{n}).
 \end{cases}
\end{equation*}

Assume that $q = \smallO(\sqrt{n})$. Fix a small enough $\varepsilon>0$.
Then for $n$ large enough it holds that $\P^{(K_n)}(x \leftrightarrow x') > 1- \varepsilon$ and that
$\P^{(K_n)}(x \leftrightarrow x' \mid b \in R_q) > 1- \varepsilon$.
By \cref{eq:tcg_correlation_bridge_explicit,eq:bg_correlation_cluster}, this means that for $n$ large enough
\begin{equation}\label{eq:tcg_correlation_both_non_bridge_vertex}
   U^{(BG_{n,m})}_{q}(x,x')
     < 1-U^{(BG_{n,m})}_{q}(b,b')
    \paren*{\paren*{1+\tfrac{w(q+1)}{q(q+m)}}(1-\varepsilon)+
    \tfrac{w(q+1)}{q(q+n)}(1-\varepsilon)} 
    =\varepsilon.
\end{equation}
If instead $q = \smallOmega(\sqrt{n})$, then analogously we find for large enough $n$ that 
\begin{align*}
   U^{(BG_{n,m})}_{q}(x,x') > 1- \varepsilon. 
\end{align*}

\leavevmode \\
\fbox{\Cref{eq:tcg_bridge_cluster_correlation}}%eq:tcg_bridge_cluster_correlation
Assume that $x,x'$ and $b$ belong to the clique of size $n$.
By again considering the random walk on $H$, we find that
\begin{equation*}
 \P^{(K_n)}(x \leftrightarrow b \mid b \in R_q) = \P_x(\tau_b < \tau_{q}) = \frac{1}{q+1}.
\end{equation*}
So, since $\frac{1}{q+1} \to 0$ for $q=\smallOmega(\sqrt{n})$, the case $q=\smallOmega(\sqrt{n})$ follows analogous to \cref{eq:tcg_correlation_both_non_bridge_vertex}.

Now assume that $q=\smallO(\sqrt{n})$. Then we have that $\P^{(K_n)}(x \leftrightarrow_{\Phi_q} b) \to 1$, so that
\begin{align*}
  U^{(BG_{n,m})}_{q}(x,b)
  &=1-U^{(BG_{n,m})}_{q}(b,b')
    \paren*{\paren*{1+\tfrac{w(q+1)}{q(q+m)}}\P^{(K_n)}(x \leftrightarrow_{\Phi_q} b)+
    \tfrac{w(q+1)}{q(q+n)} \ \tfrac{1}{q+1}} \\
  &\sim 1-U^{(BG_{n,m})}_{q}(b,b')
    \paren*{\paren*{1+\tfrac{w(q+1)}{q(q+m)}}+
    \tfrac{w(q+1)}{q(q+n)} \ \tfrac{1}{q+1}} \\
  & = \frac{wq(q+m)}{q(q+n)(q+m) + w(q+1)(2q+n+m)}.
\end{align*}
This asymptotic expression for $U^{(BG_{n,m})}_{q}(x,b)$ gives us that
\begin{equation*}
 U^{(BG_{n,m})}_{q}(x,b)\to
 \begin{cases}
   0 & \text{ if }q=\smallO(1) \text{ or }(q=\smallO(\sqrt{n}), \ w=\smallO(m)) \text { or }(q=\smallO(\sqrt{n}), \ m=\smallO(n)) \\
   \frac{c}{1+c} & \text{ if }q=\smallOmega(1), \ q=\smallO(\sqrt{n}), \ w=\smallOmega(m), \ m\sim cn \text{ with }c \in (0,1] \\
   1 & \text{ if }q=\smallOmega(\sqrt{n})
 \end{cases}
\end{equation*}
Performing the same computation for $U_q(y,b')$ yields the result of \cref{eq:tcg_bridge_cluster_correlation}.

\leavevmode \\
\fbox{\Cref{eq:tcg_different_cluster_correlation}}
By \cref{lem:edge_probability,eq:tcg_correlation_complete_partition,eq:complete_conditional_connection,eq:tcg_correlation_complete_partition}
it holds that
\begin{align*}
 U_q^{(BG_{n,m})}(x,y) &= 1-\frac{\NNM^{(BG_{n,m})}(x \leftrightarrow_{\Phi_q} y, \ (b,b') \in \Phi_q)}{Z_{BG_{n,m}}(q)}
-\frac{\NNM^{(BG_{n,m})}(x \leftrightarrow_{\Phi_q} y, \ (b',b) \in \Phi_q)}{Z_{BG_{n,m}}(q)} \\
 &= 1-\frac{\tfrac{w}{q}\NNM^{(K_n)}(x \leftrightarrow_{\Phi_q} b, \ b \in R_q) \ \NNM^{(K_m)}(b' \leftrightarrow_{\Phi_q} y)}{Z_{BG_{n,m}}(q)}
-\frac{\tfrac{w}{q}\NNM^{(K_n)}(x \leftrightarrow_{\Phi_q} b) \ \NNM^{(K_m)}(b' \leftrightarrow_{\Phi_q} y, \ b' \in R_q)}{Z_{BG_{n,m}}(q)} \\
&= 1-U_q^{(BG_{n,m})}(b,b')
\left(
\tfrac{w}{q}\P^{(K_n)}(x \leftrightarrow_{\Phi_q} b, \ b \in R_q) \ \P^{(K_m)}(b' \leftrightarrow_{\Phi_q} y) \right. \notag \\
&\quad+\left.
\tfrac{w}{q}\P^{(K_n)}(x \leftrightarrow_{\Phi_q} b) \ \P^{(K_m)}(b' \leftrightarrow_{\Phi_q} y, \ b' \in R_q)
\right)\\
&= 1-U_q^{(BG_{n,m})}(b,b')
\paren*{\frac{w}{q(q+n)} \P^{(K_m)}(b' \leftrightarrow_{\Phi_q} y)+\frac{w}{q(q+m)}\P^{(K_n)}(x \leftrightarrow_{\Phi_q} b)} \\
&= 1-\frac{w(q+m)\P^{(K_m)}(b' \leftrightarrow_{\Phi_q} y)+w(q+n)\P^{(K_n)}(x \leftrightarrow_{\Phi_q} b)}{q(q+n)(q+m) + w(q+1)(2q+n+m)},
\end{align*}
from which the limits in \cref{eq:tcg_different_cluster_correlation} follow.
\end{proof}

\section*{{Acknowledgments}} 
{L. Avena was supported by NWO Gravitation Grant 024.002.003-NETWORKS. Parts of this work were originally initiated in the Bachelor and Master theses of J.E.P. Driessen~\cite{driessen2019lep} and V.T. Koperberg~\cite{koperberg2020lep}, respectively.}

%\nocite{*} % Bibliography test

\printbibliography

@article{avena2018transfercurrent,
  author = "Luca Avena and Alexandre Gaudillière",
  title = "A proof of the transfer-current theorem in absence of reversibility",
  journal = "Statistics \& Probability Letters",
  volume = "142",
  pages = "17--22",
  year = "2018"
}

@article{avena2018applications,
  author="Luca Avena and Alexandre Gaudilli{\`e}re",
  title="Two Applications of Random Spanning Forests",
  journal="Journal of Theoretical Probability",
  year="2018",
  volume="31",
  number="4",
  pages="1975--2004"
}

@article{avena2018randomforests,
  author="Luca Avena and Fabienne Castell and Alexandre Gaudilli{\`e}re and Clothilde M{\'e}lot",
  title="Random Forests and Networks Analysis",
  journal="Journal of Statistical Physics",
  year="2018",
  volume="173",
  pages="985–1027"
}

@article{avena2019meanfield,
  author = "Luca Avena and Alexandre Gaudillière and Paolo Milanesi and Matteo Quattropani",
  title = "Loop-erased partitioning of a graph: mean-field analysis",
  journal = "Electronic journal of probability",
  year = "2022",
  volume = "27",
  pages = "1--35"
}

@article{avena2020wavelet,
  author = "Luca Avena and Fabienne Castell and Alexandre Gaudillière and Clothilde Mélot",
  title = "Intertwining wavelets or multiresolution analysis on graphs through random forests",
  journal = "Applied and Computational Harmonic Analysis", 
  year = "2020",
  volume = "48",
  number = "3",
  pages = "949--992",
}

@incollection{avena2021solutions,
  author = "Avena, Luca and Castell, Fabienne and Gaudillière, Alexandre and Melot, Clothilde",
  title = "Approximate and exact solutions of intertwining equations through random spanning forests,",
  note = "(to appear)",
  booktitle = "In and Out of Equilibrium 3. Celebrating Vladas Sidoravicius",
  publisher = "Birkhäuser Basel",
  year = "2021",
  eprint = "1702.05992",
  archivePrefix="arXiv",
}

@article{avrachenkov2017learning,
  author = "Avrachenkov, K and Chebotarev, P and Mishenin, A",
  title = "Semi-supervised learning with regularized Laplacian",
  journal = "Optimization methods \& software",
  pages = "222--236",
  volume = "32",
  number = "2",
  year = "2017"
}

@conference{barthelme2019trace,
  author = "Barthelmé, Simon and Tremblay, Nicolas and Gaudillière, Alexandre and Avena, Luca and Amblard, Pierre-Olivier",
  title = "Estimating the inverse trace using random forests on graphs",
  booktitle = "XXVII\`eme colloque GRETSI",
  year = "2019",
  eprint = "1811.11685",
  archivePrefix="arXiv"
}

@article{bauerschmidt2021hyperbolic,
  author = "Bauerschmidt, Roland and Crawford, Nicholas and Helmuth, Tyler and Swan, Andrew",
  title = "Random Spanning Forests and Hyperbolic Symmetry",
  journal = "Communications in mathematical physics",
  pages = "1223--1261",
  volume = "381",
  number = "3",
  year = "2021"
}

@article{bedini2009hyperforest,
  author = "Bedini, Andrea and Caracciolo, Sergio and Sportiello, Andrea",
  title = "Phase transition in the spanning-hyperforest model on complete hypergraphs",
  journal = "Nuclear physics. B",
  pages = "493--516",
  volume = "822",
  number = "3",
  year = "2009"
}

@article{benjamini2001usf,
  author = "Itai Benjamini and Russell Lyons and Yuval Peres and Oded Schramm",
  title = "Uniform Spanning Forests",
  journal = "The Annals of probability",
  pages = "1--65",
  volume = "29",
  number = "1",
  year = "2001"
}

@article{benjamini2004geometry,
  author = "Benjamini, Itai and Kesten, Harry and Peres, Yuval and Schramm, Oded",
  title = "Geometry of the uniform spanning forest: Transitions in dimensions 4, 8, 12",
  journal = "Annals of mathematics",
  pages = "465--491",
  volume = "160",
  number = "2",
  year = "2004"
}

@article{debernardini2015russo,
  author = "de Bernardini, Diego F and Popov, Serguei",
  title = "Russo’s Formula for Random Interlacements",
  journal = "Journal of statistical physics",
  pages = "321--335",
  volume = "160",
  number = "2",
  year = "2015"
}

@article{burton1993transfer,
 author = "Robert Burton and Robin Pemantle",
 title = "Local Characteristics, Entropy and Limit Theorems for Spanning Trees and Domino Tilings Via Transfer-Impedances",
 journal = "The Annals of Probability",
 number = "3",
 pages = "1329--1371",
 volume = "21",
 year = "1993"
}

@article{chebotarev1997mft,
  author = "Chebotarev, Pavel and Shamis, Elena",
  title = "The Matrix-Forest Theorem and Measuring Relations in Small Social Groups",
  journal = "Automation and Remote Control",
  number = "9",
  pages = "1505--1514",
  volume = "58",
  year = "1997"
}

@article{chebotarev2008goldenratio,
  author = "Chebotarev, Pavel",
  title = "Spanning forests and the golden ratio",
  journal = "Discrete Applied Mathematics",
  pages = "813--821",
  volume = "156",
  number = "5",
  year = "2008"
}

@thesis{driessen2019lep,
  author = "Jannetje E. P. Driessen",
  title = "Loop-Erased Partitions on Tree Structures",
  year = "2019",
  note = "Leiden University, Bachelor's thesis"
}

@book{grimmett1999percolation,
  author = "Grimmett, Geoffrey",
  title = "Percolation",
  publisher = "Springer",
  address = "Berlin / Heidelberg",
  year = "1999"
}

@article{grimmett2004negative,
  author = "Grimmett, G. R and Winkler, S. N",
  title = "Negative association in uniform forests and connected graphs",
  journal = "Random structures \& algorithms",
  pages = "444--460",
  volume = "24",
  number = "4",
  year = "2004"
}

@book{grimmett2006cluster,
 title = "The Random-Cluster Model",
 author = "Geoffrey Grimmett",
 year = "2006",
 publisher = "Springer",
 address = "Berlin / Heidelberg"
}

@article{hutchcroft2017indistinguishability,
  author = "Hutchcroft, Tom and Nachmias, Asaf",
  title = "Indistinguishability of trees in uniform spanning forests",
  journal = "Probability theory and related fields",
  pages = "113--152",
  volume = "168",
  publisher = "Springer",
  number = "1-2",
  year = "2017"
}

@article{hutchcroft2018interlacements,
  author = "Hutchcroft, Tom",
  title = "Interlacements and the wired uniform spanning forest",
  journal = "The Annals of probability",
  pages = "1170",
  volume = "46",
  number = "2",
  year = "2018",
}

@article{hutchcroft2019planar,
  author = "Hutchcroft, Tom and Nachmias, Asaf",
  title = "Uniform Spanning Forests of Planar Graphs",
  journal = "Forum of mathematics. Sigma",
  volume = "7",
  year = "2019"
}

@article{jarai2015avalanche,
  author = "Járai, Antal A and Redig, Frank and Saada, Ellen",
  title = "Approaching Criticality via the Zero Dissipation Limit in the Abelian Avalanche Model",
  journal = "Journal of statistical physics",
  pages = "1369--1407",
  volume = "159",
  number = "6",
  year = "2015"
}

@article{jones1999weights,
  author = "Brian D. Jones and Boris G. Pittel and Joseph S. Verducci",
  title = "Tree and Forest Weights and Their Application to Nonuniform Random Graphs",
  journal = "The Annals of applied probability",
  pages = "197--215",
  volume = "9",
  number = "1",
  year = "1999"
}

@article{kahn2010correlation,
  author = "Kahn, J and Neiman, M",
  title = "Negative correlation and log‐concavity",
  journal = "Random structures \& algorithms",
  pages = "367--388",
  volume = "37",
  number = "3",
  year = "2010"
}

@article{kenyon2011laplacian,
  author = "Richard Kenyon", 
  title = "Spanning forests and the vector bundle Laplacian",
  journal = "The Annals of probability",
  pages = "1983--2017",
  volume = "39",
  number = "5",
  year = "2011"
}

@article{kenyon2019determinantal,
  author = "Kenyon, Richard",
  title = "Determinantal spanning forests on planar graphs",
  journal = "The Annals of probability",
  volume = "47",
  number = "2",
  year = "2019"
}

@thesis{koperberg2020lep,
  author = "V. Twan Koperberg",
  title = "Loop-erased partitioning of sparse graphs",
  year = "2020",
  note = "Leiden University, Master's thesis"
}

@book{vanmieghem2010spectra,
  title = "Graph Spectra for Complex Networks",
  author = "van Mieghem, Piet",
  year = "2010",
  publisher = "Cambridge University Press"
}

@article{pemantle1991ust,
 author = "Robin Pemantle",
 journal = "The Annals of Probability",
 number = "4",
 pages = "1559--1574",
 publisher = "Institute of Mathematical Statistics",
 title = "Choosing a Spanning Tree for the Integer Lattice Uniformly",
 volume = "19",
 year = "1991"
}

@article{pemantle2000towards,
  author = "Pemantle, Robin",
  title = "Towards a theory of negative dependence",
  journal = "Journal of mathematical physics",
  pages = "1371--1390",
  volume = "41",
  number = "3",
  year = "2000"
}

@misc{pilavci2020tikhonov,
  author = "Pilavci, Yusuf and Amblard, Pierre-Olivier and Barthelme, Simon and Tremblay, Nicolas",
  title = "Graph Tikhonov Regularization and Interpolation via Random Spanning Forests",
  year = "2020",
  eprint="2011.10450",
  archivePrefix="arXiv"
}

@misc{pilavci2020smoothing,
  title="Smoothing graph signals via random spanning forests",
  author="Yusuf Y. Pilavci and Pierre-Olivier Amblard and Simon Barthelmé and Nicolas Tremblay",
  year="2020",
  eprint="1910.07963",
  archivePrefix="arXiv"
}

@article{pitman1999coalescent,
  author = "Pitman, Jim",
  title = "Coalescent Random Forests",
  journal = "Journal of combinatorial theory. Series A",
  pages = "165--193",
  volume = "85",
  number = "2",
  year = "1999",
}

@book{pitman2006csp,
  author = "Pitman, Jim",
  title = "Combinatorial Stochastic Processes",
  publisher = "Springer",
  year = "2006"
}

@inproceedings{wilson1996generating,
  title="Generating random spanning trees more quickly than the cover time",
  author="Wilson, David B.",
  booktitle="Proceedings of the Twenty-Eight Annual ACM Symposium on the Theory of Computing",
  volume="96",
  pages="296--303",
  year="1996"
}

\end{document}